\let\@wraptoccontribs\wraptoccontribs
\newcommand\reallywidehat[1]{\arraycolsep=0pt\relax
\begin{array}{c}
\stretchto{
  \scaleto{
    \scalerel*[\widthof{\ensuremath{#1}}]{\kern-.5pt\bigwedge\kern-.5pt}
    {\rule[-\textheight/2]{1ex}{\textheight}} 
  }{\textheight}   
}{0.8ex}\\            
#1\\                 
\rule{-1ex}{0ex}
\end{array}
}
\newcommand{\sA}{\ensuremath{\mathscr{A}}\xspace}
\newcommand{\sB}{\ensuremath{\mathscr{B}}\xspace}
\newcommand{\sF}{\ensuremath{\mathscr{F}}\xspace}
\newcommand{\sR}{\ensuremath{\mathscr{R}}\xspace}
\newcommand{\fkG}{\ensuremath{\mathfrak{G}}\xspace}
\newcommand{\fkX}{\ensuremath{\mathfrak{X}}\xspace}
\newcommand{\mar}[1]{\marginpar{\tiny #1}}
\newcommand{\BC}{\ensuremath{\mathbb {C}}\xspace}
\newcommand{\BG}{\ensuremath{\mathbb {G}}\xspace}
\newcommand{\BQ}{\ensuremath{\mathbb {Q}}\xspace}
\newcommand{\BR}{\ensuremath{\mathbb {R}}\xspace}
\newcommand{\BZ}{\ensuremath{\mathbb {Z}}\xspace}
\newcommand{\CB}{\ensuremath{\mathcal {B}}\xspace}
\newcommand{\CG}{\ensuremath{\mathcal {G}}\xspace}
\newcommand{\CL}{\ensuremath{\mathcal {L}}\xspace}
\newcommand{\CO}{\ensuremath{\mathcal {O}}\xspace}
\newcommand{\CP}{\ensuremath{\mathcal {P}}\xspace}
\newcommand{\CV}{\ensuremath{\mathcal {V}}\xspace}
\newcommand{\RH}{\ensuremath{\mathrm {H}}\xspace}
\newcommand{\ad}{{\mathrm{ad}}}
\DeclareMathOperator{\Aut}{Aut}
\DeclareMathOperator{\Bun}{Bun}
\DeclareMathOperator{\diag}{diag}
\DeclareMathOperator{\Gal}{Gal}
\newcommand{\GL}{\mathrm{GL}}
\newcommand{\PGL}{{\mathrm{PGL}}}
\DeclareMathOperator{\Res}{Res}
\newcommand{\SL}{{\mathrm{SL}}}
\DeclareMathOperator{\Spec}{Spec\,}
\newcommand{\SU}{{\mathrm{SU}}}
\newcommand{\wt}{\widetilde}
\newtheorem{theorem}{Theorem}
\newtheorem{proposition}[theorem]{Proposition}
\newtheorem{prop/constr}[theorem]{Proposition/Construction}
\newtheorem{lemma}[theorem]{Lemma}
\newtheorem{corollary}[theorem]{Corollary}
\theoremstyle{definition}
\newtheorem{definition}[theorem]{Definition}
\newtheorem{example}[theorem]{Example}
\newtheorem{remark}[theorem]{Remark}
\newenvironment{altenumerate}
   {\begin{list}
      {\textup{(\theenumi)} }
      {\usecounter{enumi}
       \setlength{\labelwidth}{0pt}
       \setlength{\labelsep}{0pt}
       \setlength{\leftmargin}{0pt}
       \setlength{\itemsep}{\the\smallskipamount}
       \renewcommand{\theenumi}{\roman{enumi}}
      }}
   {\end{list}}
\numberwithin{equation}{section}
\numberwithin{theorem}{section}
\renewcommand{\to}{%
   \ifbool{@display}{\longrightarrow}{\rightarrow}%
   }
\let\shortmapsto\mapsto
\renewcommand{\mapsto}{%
   \ifbool{@display}{\longmapsto}{\shortmapsto}%
   }
\newlength{\olen}
\newlength{\ulen}
\newlength{\xlen}
\newcommand{\xra}[2][]{%
   \ifbool{@display}%
      {\settowidth{\olen}{$\overset{#2}{\longrightarrow}$}%
       \settowidth{\ulen}{$\underset{#1}{\longrightarrow}$}%
       \settowidth{\xlen}{$\xrightarrow[#1]{#2}$}%
       \ifdimgreater{\olen}{\xlen}%
          {\underset{#1}{\overset{#2}{\longrightarrow}}}%
          {\ifdimgreater{\ulen}{\xlen}%
             {\underset{#1}{\overset{#2}{\longrightarrow}}}
             {\xrightarrow[#1]{#2}}}}%
      {\xrightarrow[#1]{#2}}
   }
\newcommand{\xyra}[2][]{%
   \settowidth{\xlen}{$\xrightarrow[#1]{#2}$}%
   \ifbool{@display}%
      {\settowidth{\olen}{$\overset{#2}{\longrightarrow}$}%
       \settowidth{\ulen}{$\underset{#1}{\longrightarrow}$}%
       \ifdimgreater{\olen}{\xlen}%
          {\mathrel{\xymatrix@M=.12ex@C=3.2ex{\ar[r]^-{#2}_-{#1} &}}}%
          {\ifdimgreater{\ulen}{\xlen}%
             {\mathrel{\xymatrix@M=.12ex@C=3.2ex{\ar[r]^-{#2}_-{#1} &}}}
             {\mathrel{\xymatrix@M=.12ex@C=\the\xlen{\ar[r]^-{#2}_-{#1} &}}}}}%
      {\mathrel{\xymatrix@M=.12ex@C=\the\xlen{\ar[r]^-{#2}_-{#1} &}}}%
   }
\newcommand{\xla}[2][]{%
   \ifbool{@display}%
      {\settowidth{\olen}{$\overset{#2}{\longleftarrow}$}%
       \settowidth{\ulen}{$\underset{#1}{\longleftarrow}$}%
       \settowidth{\xlen}{$\xleftarrow[#1]{#2}$}%
       \ifdimgreater{\olen}{\xlen}%
          {\underset{#1}{\overset{#2}{\longleftarrow}}}%
          {\ifdimgreater{\ulen}{\xlen}%
             {\underset{#1}{\overset{#2}{\longleftarrow}}}
             {\xleftarrow[#1]{#2}}}}%
      {\xleftarrow[#1]{#2}}
   }
\newcommand{\isoarrow}{%
   \ifbool{@display}{\overset{\sim}{\longrightarrow}}{\xrightarrow\sim}%
   }
\newcommand{\quash}[1]{}
 \newcommand{\ome}{{\bf a}}
  \newcommand{\textcyr}[1]{%
    {\fontencoding{OT2}\fontfamily{wncyr}\fontseries{m}\fontshape{n}%
     \selectfont #1}}
\newcommand{\Sha}{{\mbox{\textcyr{Sh}}}}
\begin{document}

\title{On tamely ramified  $\CG$-bundles on curves}

\author[G. Pappas and M. Rapoport]{Georgios Pappas \and Michael Rapoport  \\ \\ { {\scriptsize with an appendix by}} Brian Conrad}

\address{GP: Dept. of Mathematics, Michigan State University, E. Lansing, MI 48824, USA}
\email{pappasg@msu.edu}

\address{MR: Mathematisches Institut der Universit\"at Bonn, Endenicher Allee 60, 53115 Bonn, Germany, and University of Maryland, Department of Mathematics, College Park, MD 20742, USA}
\email{rapoport@math.uni-bonn.de}

\address{BC: Department of Mathematics, Stanford University, Stanford, CA 94305, USA}
\email{conrad@math.stanford.edu}

\thanks{{\sl 2020 Mathematics Subject Classification:} 20G35, 14L15 (primary), 20J06, 14D20 (secondary) }
\thanks{{\sl Keywords:} Bruhat-Tits group schemes over curves, local types of $(G,
\Gamma)$-bundles, purity for $(G, \Gamma)$-bundles, Hasse principle.}
\thanks{GP was supported by NSF grant \#DMS-2100743.}

\date{\today}
\maketitle

\begin{abstract}
 We consider parahoric Bruhat-Tits group schemes over a smooth projective curve and torsors under them.  If the characteristic of the ground field is either zero or positive but not too small and the generic fiber is absolutely simple and simply-connected, we show that such group schemes  can be written as invariants of reductive group schemes over a tame cover of the curve. We relate the torsors under the Bruhat-Tits group scheme and torsors under the reductive group scheme over the cover which are equivariant for the action of the covering group. For this, we develop a theory of local types for such equivariant torsors. We also relate the moduli stacks of  torsors under the Bruhat-Tits group scheme and equivariant torsors under the reductive group scheme  over the cover. 
\end{abstract}

\tableofcontents

\section{Introduction}
$\CG$-bundles on a smooth projective curve over a field $k$ were first considered in \cite{PRq}. Here $\CG$ is a \emph{parahoric Bruhat-Tits group scheme} over $X$, i.e. a smooth group scheme with reductive generic fiber and such that all fibers over closed points are parahoric group schemes in the sense of Bruhat-Tits (the terminology is due to J.~Heinloth \cite{Hein}).  The most obvious examples are given by reductive group schemes over the curve. For various reasons, it is useful to have a concrete description of $\CG$ and of its torsors. One reason is to give a Verlinde formula for the stack of $\CG$-bundles, as conjectured in \cite{PRq}, cf. \cite{Dam1, HoKu, Muk}.  

The first aim of the present paper is to give a concrete description of  general parahoric Bruhat-Tits group schemes. This is done in terms of reductive group schemes over Galois coverings of the curve, and is inspired by a paper by Balaji-Seshadri \cite{BalaS}.  In \cite{BalaS}, Bruhat-Tits group schemes are considered which are of the form $\CG=(\Res_{X'/X}(G_{X'}))^\Gamma$, where $X'/X$ is a Galois cover of curves with Galois group $\Gamma$ and where $G_{X'}$ is the constant group scheme over $X'$ corresponding to a semi-simple simply connected group $G$ over $k$, equipped with an action of $\Gamma$, cf. also \cite{Dam, HoKu}. In loc.~cit., it is assumed that the ground field $k$ has characteristic zero. One of our main results is that, at least if  $k$ is algebraically closed and excluding small positive characteristics  (for classical groups it is sufficient to exclude $p=2$), any parahoric Bruhat-Tits group scheme $\CG$ over a curve $X$ with generic fiber an absolutely  simple simply connected group is of the form  $\CG=(\Res_{X'/X}(\CG'))^\Gamma$, where $X'/X$ is a \emph{tame} Galois covering and where $\CG'$ is a reductive group scheme over $X'$. This class is strictly larger than the class considered in \cite{BalaS} or in \cite{HoKu} since, for example, here $\CG'$ is not necessarily a constant group scheme. 

Once Bruhat-Tits group schemes are presented in this way (by reductive group schemes over coverings), it becomes natural to try to describe related concepts in terms of such a presentation. The second aim of this paper is to understand $\CG$-torsors in terms of  this presentation of $\CG$. It may at first glance seem reasonable to guess that the functor $\CP'\mapsto \CP=(\Res_{X'/X}(\CP'))^\Gamma$ on the category of $\CG'$-bundles on $X'$ with compatible $\Gamma$-action yields a $\CG$-bundle and even gives an equivalence of categories with the category of $\CG$-bundles on $X$. However, this turns out not to be the case, not even when $\CG'=G_{X'}$, as was pointed out by Damiolini \cite{Dam}. In \cite{Dam} it is explained (in the context of \cite{BalaS}) how to understand these ``$(\CG', \Gamma)$-bundles'' in terms of torsors under parahoric Bruhat-Tits group schemes   which are ``twisted versions" of $\CG$. Furthermore, Damiolini introduces local obstructions (``local types of $(\CG', \Gamma)$-bundles'') which characterize those $(\CG', \Gamma)$-bundles having the same associated twisted parahoric Bruhat-Tits group schemes.  Here, we will give a concrete cohomological expression for Damiolini's local types, see also \cite{DH}.

The third aim of this paper is to establish corresponding facts for the moduli stacks of $\CG$-bundles, resp. $(\CG', \Gamma)$-bundles. It turns out that   $(\CG', \Gamma)$-bundles  on $X'$ with  a fixed local type $\tau$ can be related to torsors for a ``twisted" parahoric Bruhat-Tits group scheme $^\tau\!\CG$ over the base curve $X$. This is  interesting even when    $\CG'=G\times_k X'$ is  constant over $X'$ and we take $\Gamma$ to act trivially on $G$, in which case $\CG=G\times_k X$. In fact, Balaji-Seshadri \cite{BalaS} mostly consider this particular special case.

  The motivation given above for considering $(\CG', \Gamma)$-bundles on $X'$ is in a sense  askew to the historical development.  
 Indeed, in Grothendieck's Bourbaki talk \cite{GrothBour} on Weil's \emph{M\'emoire} \cite{Weil}, Grothendieck starts with an analytic space $X'$ (over $\BC$) which is equipped with a faithful and properly discontinuous action of a discrete group $\Gamma'$ (with finite stabilizer groups), and considers \emph{analytic $(G, \Gamma')$-bundles} on $X'$, where $G$ is a complex Lie group. Let $X=X'/\Gamma$. When the action of $\Gamma$ is free, then Grothendieck notes that there is an equivalence between $(G, \Gamma')$-bundles on $X'$ and $G$-bundles on $X$. In general, he gives a cohomological presentation of the set of isomorphism classes of $(G, \Gamma')$-bundles on $X'$ (\cite[\S 1, no. 3]{GrothBour}),  and he identifies among the set of $(G, \Gamma')$-bundles on $X'$ those that correspond to \emph{$(G, \Gamma')$-divisors} in the sense of Weil \cite{Weil} on $X'$ (these are the generalizations to arbitrary $G$ of the matrix divisors in the case of $G=\GL_n$, which  can be identified with vector bundles in the case when $X'$ is a projective variety (\cite[\S2, no. 3]{GrothBour})).
 He also gives a \emph{local classification} of $(G, \Gamma')$-bundles on $X'$ at $x'\in X'$ in terms of $\RH^1(\Gamma_{x'}, G)$,  when $X'$ is smooth of dimension one (\cite[\S 2, Prop. 1]{GrothBour}). Grothendieck remarks in \cite[\S2, no.6]{GrothBour} that one can express  $(G, \Gamma')$-bundles on $X'$ purely in terms of $X$ and the local data at the points of the branch locus of the map $X'\to X$, provided that $X'$ is a smooth algebraic curve. Grothendieck stops short of providing a definitive result. This may be related to the fact that in our set-up, even when     $\CG'=G\times_k X'$ is  constant over $X'$ and we take $\Gamma$ to act trivially on $G$, we need the twisted forms $^\tau\!\CG$ over  $X$ to accommodate all $(G, \Gamma')$-bundles on $X'$. In other words,   the results,  both of this paper and also of Balaji-Seshadri \cite{BalaS} described in the previous paragraph, may be considered as making precise these remarks of Grothendieck. 
 
 The motivation of Weil and Grothendieck  is geometric  (in fact, they work over $\BC$ as a base field), as is that of Balaji-Seshadri, Damiolini and Hong.
 However, there is also motivation from arithmetic. Indeed,  a similar situation, in which $X'/X$ is replaced by a tamely ramified cover of spectra of power series rings,  arises in the study of the moduli and deformation spaces of local Galois representations with fixed inertia types as in the work of Caraiani-Levin \cite{CaraLev}, and Caraiani-Emerton-Gee-Savitt \cite{Caraetc}. In these papers, the  inertia type determines  the analogue of the local type of a corresponding Breuil-Kisin module and thus prescribes a parahoric group scheme and a torsor under it. As a further link between the geometric and the arithmetic situation, we also note that the proof of the purity theorem for $(\CG', \Gamma)$-bundles here is inspired by our proof  in the tame case \cite{PRglsv} of Ansch\"utz' extension theorem on $\CG$-torsors on the punctured spectrum of $A_{\rm inf}$ \cite{An}.

Let us now formulate our main results, making the simplifying assumption that $G$ is semi-simple and simply connected and that the ground field $k$ is algebraically closed (in the body of the text, we are more specific as to our precise  assumptions). The first result concerns the structure of Bruhat-Tits group schemes, cf. Theorem \ref{descred}.

\begin{theorem}\label{intro-descred}
Let $\CG$ be a parahoric Bruhat-Tits group scheme  over $X$ such that its generic fiber $G$ is (simply connected and)  absolutely  simple. Assume  $p\neq 2$ if $G$ is a classical group, and $p\neq 2,3$ if $G$ is not of type $E_8$, and $p\neq 2, 3, 5$ if $G$ is of type $E_8$.  Then there exists a tame Galois covering $X'/X$ with Galois group $\Gamma$ and a reductive group scheme $\CG'$ over $X'$ with generic fiber $G'=G\otimes_{k(X)} k(X')$  to which  the $\Gamma$-action on $G'$ extends such that $\CG=\Res_{X'/X}(\CG')^\Gamma$.  
\end{theorem}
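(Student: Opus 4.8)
\emph{Strategy.} The plan is to reduce the assertion to a local problem at the finitely many points of $X$ where $\CG$ fails to be reductive, to solve that problem by Bruhat--Tits theory, and to globalize by a Beauville--Laszlo type gluing argument. Since $\CG$ is smooth affine over the curve $X$ with reductive generic fiber, the locus $U\subseteq X$ over which $\CG$ is reductive is open and dense; put $S=X\setminus U$, a finite set of closed points, and for $x\in S$ write $F_x=\widehat{k(X)}_x\cong k\llps t_x\lrps$, $\CO_x=k\lps t_x\rps$. Under the stated hypotheses on $p$ the order of the outer automorphism group of $G$ is prime to $p$, so $G$ is split by a tamely ramified extension of $k(X)$ and each local parahoric $\CP_x:=\CG\times_X\Spec\CO_x$ is tamely ramified, hence determined by a facet $\mathbf f_x$ of the Bruhat--Tits building $\mathscr B(G_{F_x},F_x)$. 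The local statement I will prove is: there exist an integer $e_x$ with $p\nmid e_x$, the cyclic tame extension $F'_x=k\llps t_x^{1/e_x}\lrps$ with $\Gamma_x=\Gal(F'_x/F_x)\cong\mu_{e_x}$, and a reductive $\CO_{F'_x}$-group scheme $\CG'_x$ carrying a $\Gamma_x$-action that restricts on generic fibers to the natural Galois action on $G\otimes_{F_x}F'_x$, such that $\Res_{\CO_{F'_x}/\CO_x}(\CG'_x)^{\Gamma_x}\cong\CP_x$.

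\emph{The local statement.} After enlarging $e_x$ if necessary so that $d_x\mid e_x$, where $d_x\le 3$ is the splitting degree of $G$ at $x$, the group $G_{F'_x}$ is split, and by tame Galois descent of buildings (Rousseau, Prasad--Yu) one has $\mathscr B(G_{F_x},F_x)=\mathscr B(G_{F'_x},F'_x)^{\Gamma_x}$; moreover the hyperspecial points of $\mathscr B(G_{F'_x},F'_x)$ are exactly the points of the coweight lattice of the split group, rescaled by $1/e_x$. One then chooses a point $x_0$ in the open facet $\mathbf f_x$ lying in this rescaled coweight lattice; this is possible for a suitable prime-to-$p$ multiple of $e_x$ because the admissible tame degrees are controlled by the marks of the (possibly twisted) affine Dynkin diagram of $G$, whose prime divisors are $\{2\}$ in the classical case, $\{2,3\}$ for the exceptional groups other than $E_8$, and $\{2,3,5\}$ for $E_8$ --- this is precisely where the hypotheses on $p$ are used, as one needs $e_x$ to avoid exactly these primes. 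Let $\CG'_x$ be the reductive $\CO_{F'_x}$-model of $G_{F'_x}$ attached to $x_0$ (unique up to isomorphism, $\CO_{F'_x}$ being strictly henselian); since $x_0$ is $\Gamma_x$-fixed, $\CG'_x$ is $\Gamma_x$-stable. That $\Res_{\CO_{F'_x}/\CO_x}(\CG'_x)^{\Gamma_x}$ is a parahoric group scheme, and that the facet attached to it is the image of $x_0$ under $\mathscr B(G_{F'_x},F'_x)^{\Gamma_x}=\mathscr B(G_{F_x},F_x)$, i.e.\ $\mathbf f_x$, follows from the Weil restriction/fixed-point formalism developed earlier in the text and in the Appendix by Conrad; hence $\Res_{\CO_{F'_x}/\CO_x}(\CG'_x)^{\Gamma_x}\cong\CP_x$. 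Finally, using $H^1(F_x,G)=1$ (as $F_x$ has cohomological dimension $\le 1$) one may conjugate $\CG'_x$ by an element of $G(F'_x)$ so that the induced $\Gamma_x$-action on its generic fiber is the natural Galois-semilinear one.

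\emph{Globalization.} Choose a connected tame Galois cover $\pi\colon X'\to X$ with group $\Gamma$, unramified over $U$ and with inertia of order $e_x$ above each $x\in S$; such a cover exists by the Riemann existence theorem in characteristic $0$ and by Grothendieck--Murre's description of the tame fundamental group in characteristic $p$ (replacing each $e_x$ by a prime-to-$p$ multiple if needed, which does not affect the local statement). Now glue, in the sense of Beauville--Laszlo, the pullback $\pi^*(\CG|_U)$ over $\pi^{-1}(U)$ to the local models $\CG'_x$ over $\Spec\widehat{\CO}_{X',\widetilde x}$ for $\widetilde x\in\pi^{-1}(S)$ --- transporting the datum at one point of each $\Gamma$-orbit by the action of $\Gamma$ --- along the canonical identifications of generic fibers over the punctured formal discs; this produces a reductive group scheme $\CG'$ over $X'$ with generic fiber $G\otimes_{k(X)}k(X')$ and a $\Gamma$-action extending the natural one on the generic fiber. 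Then $\Res_{X'/X}(\CG')^{\Gamma}$ is smooth affine over $X$ with parahoric fibers (again by the Appendix; the $\Gamma$-action is tame, so Edixhoven's theorem on fixed points applies), it equals $\CG|_U=\Res_{\pi^{-1}(U)/U}(\pi^*(\CG|_U))^{\Gamma}$ over $U$ by étale descent, and it equals $\Res_{\CO_{F'_x}/\CO_x}(\CG'_x)^{\Gamma_x}\cong\CP_x$ over $\Spec\CO_x$ for each $x\in S$ by the local statement. Since these restrictions are compatible over the punctured discs, Beauville--Laszlo gluing on $X$ yields $\Res_{X'/X}(\CG')^{\Gamma}\cong\CG$, as desired.

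\emph{The main obstacle.} The crux is the local statement: identifying which facet a parahoric of the form $\Res_{\CO'/\CO}(\CG')^{\Gamma}$ corresponds to, and conversely realizing an arbitrary facet by such a presentation with $\Gamma$ of order prime to $p$. This forces one to keep precise track of which tame degrees can occur, hence of the marks of the affine Dynkin diagram of $G$ (and of its twisted form), and it is exactly here that the hypotheses on $p$ --- $p\neq 2$ for classical groups, $p\neq 2,3$ for exceptional groups other than $E_8$, $p\neq 2,3,5$ for $E_8$ --- are needed. By comparison, the passage from the local to the global cover and the Beauville--Laszlo gluing are routine.
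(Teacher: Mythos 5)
Your local statement is essentially the paper's own key lemma (Proposition \ref{BThyper}): represent $\CG_x$ as the stabilizer of a point of the facet, observe its coordinates are rational with denominators controlled by the marks of the (possibly twisted) affine Dynkin diagram, and pass to a tame totally ramified extension of prime-to-$p$ degree making that point hyperspecial; the invariants-of-Weil-restriction formalism you invoke is Section \ref{ss:BTcov} (via \cite[1.7.6]{BTII}, Edixhoven and Prasad--Yu -- not the appendix, which concerns the Hasse principle), and the final conjugation step via $\RH^1(F_x,G)=1$ is superfluous, since your $x_0$ already lies in $\sB(G,K_x)=\sB(G,F'_x)^{\Gamma_x}$, so the hyperspecial model is $\Gamma_x$-stable for the natural semilinear action. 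Where you genuinely diverge from the paper is the globalization, and there is a gap there.

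You require a tame Galois cover $\pi\colon X'\to X$ that is \emph{unramified over $U=X\setminus S$} with inertia of order $e_x$ (or a prime-to-$p$ multiple) above each $x\in S$. Such a cover need not exist: take $X=\BP^1$ and $\CG$ an Iwahori model at a single point $x_0$ (and hyperspecial elsewhere), so $S=\{x_0\}$; the tame fundamental group of $\BA^1$ is trivial (already over $\BC$ there is no nontrivial cover of $\BP^1$ branched at one point), so no cover with branch locus contained in $S$ exists, and allowing larger prime-to-$p$ inertia orders does not help. More generally, prescribing the inertia exactly at the points of $S$ meets the global relation among the inertia generators in $\pi_1^{\mathrm{tame}}(U)$. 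The repair is easy but has to be said: drop the condition that the branch locus equal $S$ and allow auxiliary tame branch points. At such a point $y\notin S$ the scheme $\CG_y$ is hyperspecial, it stays hyperspecial after any (tame) base change, the pullback is still reductive, and $(\Res_{O'_{y'}/O_y}(\CG_y\otimes O'_{y'}))^{\Gamma_{y'}}=\CG_y$, so your Beauville--Laszlo gluing and the identification $\Res_{X'/X}(\CG')^\Gamma\simeq\CG$ go through unchanged. The paper avoids the issue differently: it builds the cover arithmetically, by choosing a global polynomial whose coefficients approximate those of the local polynomials defining the $F'_x$ (weak approximation plus Krasner's lemma) and passing to the normal hull; this cover may well ramify outside $S$, which is harmless because the group scheme $\CG'$ is defined uniformly at every point of $X'$ as the stabilizer model of the same point of the building. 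With the auxiliary-branch-point correction (or with the paper's Krasner-type construction of $X'$), your argument is complete and yields the same result.
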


 The proof is based on an amusing analogous local result in Bruhat-Tits theory (Proposition \ref{BThyper}) which represents a parahoric subgroup as the set of rational points of a hyperspecial parahoric subgroup for a finite extension of the local field. Similar results  in characteristic zero, based on a local result in a loop group context, also appear in work of Damiolini and Hong \cite{DH}.

The next result concerns the description of $\CG$-bundles in terms of the presentation of $\CG$ in the form of Theorem \ref{intro-descred}. More precisely, for the next results let  $X'/X$ be a tame Galois covering $X'/X$ with Galois group $\Gamma$ and let $\CG'$ be a parahoric Bruhat-Tits group scheme over $X'$  with generic fiber $G'=G\otimes_{k(X)} k(X')$  to which  the $\Gamma$-action on $G'$ extends such that $\CG=\Res_{X'/X}(\CG')^\Gamma$. Any $\CG$-bundle defines a  $(\CG', \Gamma)$-bundle, i.e.  a $\CG'$-bundle with a $\Gamma$-action compatible with the $\Gamma$-action on $\CG'$, and this association defines a fully faithful functor, cf. Proposition \ref{BTtors}. The failure of essential surjectivity is described by the following result, cf. Proposition \ref{propLeray} and Proposition \ref{LTfiber}.

\begin{theorem}\label{intro-exseq}
There is an exact sequence of pointed sets, where in the middle is the set of isomorphism classes of $(\CG', \Gamma)$-bundles on $X'$,
 \begin{equation}\label{intro-eqleray4}
0\to \RH^1(X, \CG)\to \RH^1(X'; \Gamma, \CG' )\xrightarrow{\  {\rm LT}\ }   \prod_{x\in B}  \RH^1(\Gamma_{x'},  \bar\CG'^{\rm red}_{x'}(k)) .
\end{equation}
Here, $B\subset X$ is the (finite) branch locus of $\pi: X'\to X$ and, for $x\in B$ a lift $x'$ of $x$ to $X'$ is chosen;  $\bar\CG'^{\rm red}_{x'}$ denotes the maximal reductive quotient of the special fiber of $\CG'$ over $x'$, and $\Gamma_{x'}$ is the stabilizer of $x'$ in $\Gamma$. 

Furthermore, the map ${\rm LT}$ is surjective and the fibre of ${\rm LT}$ over $\tau\in  \prod_{x\in B}  \RH^1(\Gamma_{x'},  \bar\CG'^{\rm red}_{x'}(k))$ can be identified with $ \RH^1(X, ^\tau\!\CG)$. Here $ ^\tau\!\CG$ is a parahoric Bruhat-Tits group scheme over $X$  with generic fiber $G$ whose localizations  $^\tau\!\CG_x=^\tau\!\CG\times_X \Spec(O_x)$ at the completed local rings are  isomorphic to  $\CG_x$ at all points $x$ outside $B$ and which   can be described explicitly in terms of $\tau$, for all $x\in B$.
\end{theorem}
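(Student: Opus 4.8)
The plan is to examine \eqref{intro-eqleray4} one arrow at a time, the decisive ingredients being a local computation over the completed local rings at the branch points and a Beauville--Laszlo type gluing for $\CG$-bundles. For $x\in B$ with chosen lift $x'$ write $O_x,O_{x'}$ for the completed local rings and $K_x,K_{x'}$ for their fraction fields; by tameness $\Spec K_{x'}\to\Spec K_x$ is an \'etale $\Gamma_{x'}$-torsor, while $\Spec O_{x'}\to\Spec O_x$ is totally ramified of degree $|\Gamma_{x'}|$, prime to $p$. Since $O_{x'}$ is complete local with algebraically closed residue field and $\CG'$ is smooth with connected special fibre, reduction identifies $\CG'$-torsors on $\Spec O_{x'}$ with $\bar\CG'_{x'}$-torsors over $k$, which are trivial (a nonempty torsor over $k=\bar k$ has a rational point); hence a $(\CG',\Gamma_{x'})$-bundle on $\Spec O_{x'}$ is, up to isomorphism, the trivial torsor equipped with a $1$-cocycle $\Gamma_{x'}\to\CG'(O_{x'})$, so that $\RH^1(\Spec O_{x'};\Gamma_{x'},\CG')\cong\RH^1(\Gamma_{x'},\CG'(O_{x'}))$. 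The reductions $\CG'(O_{x'})\twoheadrightarrow\bar\CG'_{x'}(k)\twoheadrightarrow\bar\CG'^{\rm red}_{x'}(k)$ have kernel a pro-$p$ group, resp.\ the $k$-points of a connected unipotent group (a $p$-group in characteristic $p$, uniquely divisible in characteristic $0$); as $|\Gamma_{x'}|$ is prime to $p$ these induce bijections on $\RH^1(\Gamma_{x'},-)$, which identifies the $x$-component of the target of LT. The map LT itself is the product over $x\in B$ of the restriction maps $\RH^1(X';\Gamma,\CG')\to\RH^1(\Spec O_{x'};\Gamma_{x'},\CG')$.

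For \emph{exactness}: the first map is injective on isomorphism classes since the functor $F\colon\CP\mapsto\CP'$ of Proposition \ref{BTtors} is fully faithful, and LT kills its image because a $\CG$-bundle on $X$ is trivial over each $\Spec O_x$ (completeness plus $\RH^1(k,-)=\ast$). Conversely let $\CP'$ have trivial local type. Over $U=X\setminus B$ the cover $\pi^{-1}(U)\to U$ is a $\Gamma$-torsor and $\CG|_U=\Res_{X'/X}(\CG'|_{\pi^{-1}(U)})^\Gamma$, so $\CP'|_{\pi^{-1}(U)}$ descends to a $\CG|_U$-bundle $\CP_U$; over each $\Spec O_{x'}$, triviality of the local type gives $\CP'|_{\Spec O_{x'}}\cong F_x(\CP_x)$ with $\CP_x$ the trivial $\CG|_{O_x}$-bundle, where $F_x$ is the local avatar of $F$ (using $\CG|_{O_x}=\Res_{O_{x'}/O_x}(\CG'|_{O_{x'}})^{\Gamma_{x'}}$). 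On $\Spec K_x$ both $\CP_U|_{\Spec K_x}$ and $\CP_x|_{\Spec K_x}$ are trivial because $\RH^1(K_x,G)=\ast$ for the semisimple simply connected $G$, and since $\Spec K_{x'}\to\Spec K_x$ is \'etale Galois the functor there is an equivalence, so the isomorphisms $F(\CP_U)|_{\Spec K_{x'}}\cong\CP'|_{\Spec K_{x'}}\cong F_x(\CP_x)|_{\Spec K_{x'}}$ descend to $\CP_U|_{\Spec K_x}\cong\CP_x|_{\Spec K_x}$. Beauville--Laszlo gluing of $\CG$-bundles along $U$ and the discs $\Spec O_x$ then produces $\CP$ on $X$ with $F(\CP)\cong\CP'$. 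All of this is carried out $\Gamma$-equivariantly over the fibres $\pi^{-1}(x)$.

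For \emph{surjectivity and the fibres}: given $\tau=(\tau_x)_{x\in B}$, choose cocycles $c_x\colon\Gamma_{x'}\to\CG'(O_{x'})$ representing $\tau_x$, let $\CP'_x$ be the trivial $\CG'$-torsor on $\Spec O_{x'}$ with $\Gamma_{x'}$-structure twisted by $c_x$ (so of local type $\tau_x$), and glue the trivial $(\CG',\Gamma)$-bundle on $\pi^{-1}(U)$ to the $\CP'_x$ over the $\Gamma$-orbits of the $x'$; the gluing data exist because $\CP'_x|_{\Spec K_{x'}}$ descends to a $G$-bundle on $\Spec K_x$, necessarily trivial as $\RH^1(K_x,G)=\ast$. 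This yields $\CP'_\tau$ with $\mathrm{LT}(\CP'_\tau)=\tau$, proving surjectivity. Now set ${}^\tau\!\CG':=\uAut_{\CG'}(\CP'_\tau)$: since $\CP'_\tau$ is trivial over $\pi^{-1}(U)$ and, as a torsor, over each $\Spec O_{x'}$, this is a parahoric Bruhat--Tits group scheme over $X'$ with generic fibre $G'$, carrying the $\Gamma$-action induced by $\CP'_\tau$, and ${}^\tau\!\CG:=\Res_{X'/X}({}^\tau\!\CG')^\Gamma$ is then a parahoric Bruhat--Tits group scheme over $X$ with generic fibre $G$, by the same invariants construction that yields $\CG$ from $\CG'$. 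Since ${}^\tau\!\CG|_U=\CG|_U$ we get ${}^\tau\!\CG_x\cong\CG_x$ for $x\notin B$, while over $\Spec O_x$ the equivariant structure of $\CP'_x$ gives ${}^\tau\!\CG|_{O_x}=\bigl(\Res_{O_{x'}/O_x}\CG'|_{O_{x'}}\bigr)^{\Gamma_{x'}}$ for the $\Gamma_{x'}$-action twisted by $c_x$ --- the promised explicit description, depending only on $\tau_x$. Finally, the equivariant torsor-twisting equivalence $\CP'\mapsto\underline{\Isom}_{\CG'}(\CP'_\tau,\CP')$ identifies $(\CG',\Gamma)$-bundles with $({}^\tau\!\CG',\Gamma)$-bundles, sends $\CP'_\tau$ to the trivial ${}^\tau\!\CG'$-torsor, and locally at each $x$ realises the twisting bijection $\RH^1(\Gamma_{x'},{}^{c_x}(\CG'(O_{x'})))\xrightarrow{\sim}\RH^1(\Gamma_{x'},\CG'(O_{x'}))$ carrying the base point to $\tau_x$; hence $\CP'$ has local type $\tau$ iff its image has trivial local type. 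By the exactness just proved, applied to the pair $({}^\tau\!\CG,{}^\tau\!\CG')$, the latter $({}^\tau\!\CG',\Gamma)$-bundles are exactly the ones coming from $\RH^1(X,{}^\tau\!\CG)$, and that map is injective; therefore $\mathrm{LT}^{-1}(\tau)\cong\RH^1(X,{}^\tau\!\CG)$.

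The steps above that are routine in spirit but carry the real weight are the interplay of the three descents near $B$ (\'etale on $U$, ramified on the discs $\Spec O_x$, \'etale on the punctured discs $\Spec K_x$) and the fact, used repeatedly, that $\Gamma$-invariants under the tame action of a possibly non-reductive parahoric Bruhat--Tits group scheme are again parahoric Bruhat--Tits. I expect the genuinely delicate part to be carrying out all the gluings $\Gamma$-equivariantly over the fibres of $\pi$ and verifying that ${}^\tau\!\CG$, and the various bijections, are independent of the choices of lift $x'$, of the cocycle $c_x$ within its class, and of the gluing data.
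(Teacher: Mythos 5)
Your proposal is correct in substance and reaches all three assertions of the theorem, but it takes a noticeably more hands-on route than the paper at two points. For the exact sequence, the paper works with the quotient stack $q\colon [X'/\Gamma]\to X$ and cites Giraud's non-abelian Leray sequence $0\to \RH^1(X,R^0q_*\fkG)\to\RH^1(\fkX,\fkG)\to\RH^0(X,R^1q_*\fkG)$, then computes $R^0q_*\fkG=\CG$ and the stalks of $R^1q_*\fkG$ over strict henselizations (via Artin approximation and the pro-unipotent congruence kernel, exactly as in your local analysis); you instead prove exactness directly by \'etale descent over $U$, local triviality of the type over the formal discs, triviality over $\Spec(K_x)$ from $\RH^1(K_x,G)=0$, and Beauville--Laszlo gluing. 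For the fibres, the paper invokes Giraud's twisting formalism to get ${\rm LT}^{-1}(\tau)\simeq\RH^1(X,\CG^\flat)$ with $\CG^\flat=(\Res_{X'/X}\uAut(\CP'))^\Gamma$ and then matches $\CG^\flat$ with $^\tau\!\CG$ locally; your device of setting $^\tau\!\CG'=\uAut_{\CG'}(\CP'_\tau)$, passing to the $\Gamma$-related pair $(^\tau\!\CG,{}^\tau\!\CG')$, and re-applying the exactness statement is the same twisting idea unwound by hand, and it yields the same group scheme since $(\Res_{X'/X}{}^\tau\!\CG')^\Gamma=\CG^\flat$. The surjectivity argument (gluing twisted trivial torsors over the discs to the trivial torsor on $\pi^{-1}(U)$, using that the local cocycle becomes a coboundary in $G'(K'_{x'})$) coincides with the paper's. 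What your route buys is self-containedness and an explicit description of $^\tau\!\CG$ over each $O_x$ as twisted invariants, matching Definition \ref{LTgroup}; what it costs is precisely the bookkeeping you flag at the end --- the compatibility of the three gluing isomorphisms over $\Spec(K_{x'})$ and the $\Gamma$-equivariance over whole fibres of $\pi$ --- which Giraud's machinery handles wholesale. One further small point in your favour: by constructing $\CP'_\tau$ trivial on $\pi^{-1}(U)$ you get $^\tau\!\CG|_U\simeq\CG|_U$ for free, whereas for an arbitrary base point the paper needs Heinloth's triviality theorem (Remark \ref{extraInfo}). The only imprecision worth fixing is the phrase ``reduction identifies $\CG'$-torsors on $\Spec O_{x'}$ with $\bar\CG'_{x'}$-torsors over $k$'': this is not an equivalence; what is true and all you need is that a torsor under a smooth group scheme over a complete local ring with algebraically closed residue field is trivial (a $k$-point of the smooth special fibre lifts).
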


There are two possible approaches to the explicit determination of the cohomology set $\RH^1(\Gamma_{x'},  \bar\CG'^{\rm red}_{x'}(k))$ occurring in \eqref{intro-eqleray4}.  This is the set of ``local types" at $x'$.   The first approach also appears in \cite{DH} (with a different proof), and both approaches also appear under special hypotheses in \cite{BalaS}.
 The first approach  is via a $\Gamma_{x'}$-stable maximal torus $T$ of $\bar\CG'^{\rm red}_{x'}$, with Weyl group $W$, and uses the bijection
\[
\RH^1(\Gamma_{x'}, \bar\CG'^{\rm red}_{x'}(k))=\big[\ker({\rm N}_{\Gamma_{x'}}: T(k)\to T(k))/I_{\Gamma_{x'}}T(k)\big]/{W^{\Gamma_{x'}}},
\]
cf. \eqref{H1Stein} and \eqref{H1torus}. Here ${\rm N}_{\Gamma_{x'}}=\sum_{\gamma\in\Gamma_{x'}} \gamma$ is the norm element and $I_{\Gamma_{x'}}$ the augmentation ideal in the group algebra $\BZ[\Gamma_{x'}]$.  

The second approach is via the Bruhat-Tits building $\sB(G, K_x)$.  Namely, we establish a bijection
\[
 \RH^1(\Gamma_{x'},  \bar\CG'^{\rm red}_{x'}(k))=G(K_x)\backslash \big(\sB(G, K_x)\cap (G(K'_{x'})\cdot {\ome}_x)\big),
\]
cf. Proposition \ref{LTprop2}.
Here $\CG(O_x)$ is the stabilizer of the point $\ome_x\in\sB(G, K_x)$, and $G(K'_{x'})\cdot {\ome}_x$ denotes the $G(K'_{x'})$-orbit of $\ome_x$, where we use the natural embedding $\sB(G, K_x)\subset \sB(G, K'_{x'})$.

Our final result concerns the moduli stacks $\Bun_{\CG}$ of $\CG$-bundles, resp. $\Bun_{(\CG', \Gamma)}$ of $(\CG', \Gamma)$-bundles, cf. Theorem \ref{thmlocconst}. These are smooth algebraic stacks over $k$. 
\begin{theorem}\label{intro-thmlocconst}

\noindent a) The local type map
\[
{\rm LT}\colon  \Bun_{(\CG', \Gamma)}(k)\to \prod_{x\in B} \RH^1({\Gamma_{x'}}, \bar\CG'^{\rm red}_{x'}(k))
\] is locally constant.

\noindent b) For each element $\tau\in   \prod_{x\in B} \RH^1({\Gamma_{x'}}, \bar\CG'^{\rm red}_{x'}(k))$, let $(\Bun_{(\CG', \Gamma)})_\tau$ be the open and closed substack where the value of ${\rm LT}$ equals $\tau$. Fix a point in $(\Bun_{(\CG', \Gamma)})_\tau(k)$. Then there is a canonical isomorphism of algebraic stacks preserving the base points
\[
(\Bun_{(\CG', \Gamma)})_\tau\simeq \Bun_{\, ^\tau\!\CG} .
\]
Here $^\tau\!\CG$ is the parahoric Bruhat-Tits group scheme from Theorem \ref{intro-exseq}. Hence
\[
\Bun_{(\CG', \Gamma)}\simeq \bigsqcup_{\tau\in    \prod_{x\in B} \RH^1({\Gamma_{x'}}, \bar\CG'^{\rm red}_{x'}(k))}\Bun_{\,^\tau\!\CG} .
\]
\end{theorem}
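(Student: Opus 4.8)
The plan is to reduce everything to Theorem \ref{intro-exseq} together with a deformation-theoretic argument that the local-type invariant cannot jump in families. First I would set up the moduli stacks $\Bun_\CG$ and $\Bun_{(\CG',\Gamma)}$ as algebraic stacks locally of finite type over $k$ (this is standard for parahoric Bruhat--Tits group schemes, and for the equivariant version one works on the quotient stack $[X'/\Gamma]$, so that $(\CG',\Gamma)$-bundles are torsors for a parahoric-type group scheme over that stack); smoothness follows from the vanishing of $\RH^2$ with coefficients in the relevant Lie algebra on a curve, or from the fact that these are twisted forms of the already-studied $\Bun_\CG$. For part a), the key point is that the map $\mathrm{LT}$ sends a family of $(\CG',\Gamma)$-bundles over a base scheme $S$ to a section of a locally constant sheaf: for $x\in B$, the construction of the local type at $x'$ only uses the restriction of the bundle to the completed local ring $O_{x'}$, and over $O_{x'}$ every $(\CG',\Gamma)$-bundle becomes a well-understood object (after a tame base change the group $\CG'$ is reductive, even hyperspecial, by Proposition \ref{BThyper}/Theorem \ref{intro-descred}), whose isomorphism class over an Artinian or Henselian base is rigid because $\RH^1(\Gamma_{x'},\bar\CG'^{\mathrm{red}}_{x'}(k))$ is a discrete (finite) set and the obstruction-deformation groups for deforming such a local datum vanish. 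Concretely, I would show that for any local Artin $k$-algebra $A$, restriction from $S=\Spec A$ to the closed point induces a bijection on the set of local types, which gives local constancy; then I would quote finiteness of $\RH^1(\Gamma_{x'},\bar\CG'^{\mathrm{red}}_{x'}(k))$ (Steinberg, via the torus description recalled after \eqref{intro-eqleray4}) to conclude the decomposition into open and closed substacks $(\Bun_{(\CG',\Gamma)})_\tau$.

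For part b), fix $\tau$ and the twisted group scheme $^\tau\!\CG$ from Theorem \ref{intro-exseq}, which agrees with $\CG$ away from $B$ and is an explicit modification at each $x\in B$. The idea is to produce a morphism of stacks $(\Bun_{(\CG',\Gamma)})_\tau\to \Bun_{\,^\tau\!\CG}$ functorially in the test scheme $S$, refining on $S$-points the fibre identification of Theorem \ref{intro-exseq}. I would do this by the standard ``twist by a $1$-cocycle'' construction: given a $(\CG',\Gamma)$-bundle $\CP'$ over $X'\times S$ with local type $\tau$, its associated twisted parahoric group scheme over $X\times S$ is (by the construction underlying Theorem \ref{intro-exseq}) canonically isomorphic to $^\tau\!\CG\times_k S$ — this uses that having local type $\tau$ pins down the local modification at every branch point — and $\CP'$ then descends/transports to a $^\tau\!\CG$-torsor on $X\times S$. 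In the reverse direction, a $^\tau\!\CG$-torsor on $X\times S$ gives, after the tame base change $X'\to X$ and taking the equivariant structure coming from $^\tau\!\CG=\Res_{X'/X}(\,^\tau\!\CG')^\Gamma$ (the twisted analogue of the presentation in Theorem \ref{intro-descred}), a $(\CG',\Gamma)$-bundle whose local type is forced to be $\tau$ by construction. The two constructions are mutually inverse because they are so on geometric points by Theorem \ref{intro-exseq} and because both stacks are algebraic, so an isomorphism can be checked after restricting to Artinian and then all affine test rings; I would also check that base points are matched by construction. The final displayed decomposition is then the combination of a) and b).

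The main obstacle I expect is \emph{making the isomorphism in part b) functorial in families}, rather than just on $k$-points. Theorem \ref{intro-exseq} is stated as an exact sequence of pointed sets, so the identification of the fibre of $\mathrm{LT}$ over $\tau$ with $\RH^1(X,\,^\tau\!\CG)$ is a bijection of sets; upgrading this to an isomorphism of stacks requires that the twisted group scheme $^\tau\!\CG$ and the transport construction $\CP'\mapsto{}^\tau\!\CP$ be defined relatively over an arbitrary base $S$, compatibly with base change, and that the resulting $^\tau\!\CG\times S$ really is the \emph{constant} (pulled-back) family and not some $S$-twisted form — which is exactly what local constancy of $\mathrm{LT}$ (part a) guarantees. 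So the logical order matters: a) must be in hand to even formulate b) correctly. A secondary technical point is to verify that $\Bun_{(\CG',\Gamma)}$ is algebraic and smooth; I would handle this by presenting it as $\Bun_{\CH}$ for a parahoric-type group scheme $\CH$ over the orbifold curve $[X'/\Gamma]$ (or, locally on $\Bun$, by the twist identification of b) itself, bootstrapping from the already-established algebraicity and smoothness of $\Bun_{\,^\tau\!\CG}$).
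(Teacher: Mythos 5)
Your overall plan (decompose $\Bun_{(\CG',\Gamma)}$ by local type, identify each piece with $\Bun_{\,^\tau\!\CG}$ using the twisted group scheme from Theorem \ref{intro-exseq}) is the right architecture, but the proposal is missing the one non-formal ingredient on which the paper's proof rests, and the substitutes you offer do not work. For part a), constancy of the local type over Artin local $k$-algebras does not give local constancy: what is needed is that each fibre ${\rm LT}^{-1}(\tau)$ is open \emph{and closed}, i.e. that the type cannot jump under specialization in a family over a positive-dimensional base, and infinitesimal rigidity says nothing about this. Concretely, for an $S$-family $\CP'$, the locus of $s\in S$ where $\Res_{X'/X}(\CP')^\Gamma$ (or its twisted analogue) is a torsor is open for trivial reasons (smoothness plus properness of $X$); the entire content is that it is closed, equivalently that the fibre of the invariant pushforward over a point $(s,x)$ with $x\in B$ cannot be empty when it is nonempty over a dense open of $S$. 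This is the purity theorem, Theorem \ref{thmpur}, whose proof is a genuinely non-infinitesimal cohomological argument (if the fibre were empty the invariant pushforward would be affine over the punctured two-dimensional local scheme, contradicting the computation of $\RH^1$ of the structure sheaf there). ``Vanishing of obstruction-deformation groups'' is not the issue and does not substitute for it. The paper then obtains a) and b) in one stroke: for each $\tau$, fix a base point, form $^\tau\!\CG$ as in Proposition \ref{LTfiber}, and the natural fully faithful functor $\Bun_{\,^\tau\!\CG}\to \Bun_{(\CG',\Gamma)}$ (Proposition \ref{BTtors} with $^\tau\!\CG$ in place of $\CG$) is an open and closed immersion of stacks precisely by Theorem \ref{thmpur} applied to $^\tau\!\CG$, with $k$-points exactly ${\rm LT}^{-1}(\tau)$ by Propositions \ref{propLeray} and \ref{LTfiber}.

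For part b) the same gap reappears in your forward construction: to send a family of $(\CG',\Gamma)$-bundles of type $\tau$ to a family of $^\tau\!\CG$-torsors on $X\times S$ you must know that the relevant $\Gamma$-invariant pushforward is a torsor at \emph{every} point of $X\times S$, in particular has nonempty fibres over $B\times S$; this is exactly purity and is not a formal consequence of ``the local type pins down the local modification.'' Moreover, your criterion for concluding that the two constructions are mutually inverse --- agreement on geometric points plus checking over Artinian and then affine test rings --- is not a valid way to prove an isomorphism of algebraic stacks (a morphism bijective on $k$-points and formally nice need not be an isomorphism, and you have not established that your functor is a monomorphism in families). Once Theorem \ref{thmpur} is in hand no such two-sided verification is needed: the open-closed immersion $\Bun_{\,^\tau\!\CG}\hookrightarrow \Bun_{(\CG',\Gamma)}$ already has image $(\Bun_{(\CG',\Gamma)})_\tau$, which gives b), and a) follows since these images cover and are open and closed. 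So the concrete missing idea is the purity theorem (closedness under specialization of the torsor locus); without it neither part of your argument goes through, and with it your outline reduces to the paper's proof.
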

This theorem implies, by Heinloth's connectedness theorem \cite{Hein}, that 
\[\pi_0(\Bun_{(\CG', \Gamma)})=\prod_{x\in B} \RH^1({\Gamma_{x'}}, \bar\CG'^{\rm red}_{x'}(k)).
\] 

 The lay-out of the paper is as follows. In section \ref{s:tameBT} we introduce Bruhat-Tits group schemes and prove Theorem \ref{intro-descred}. In section \ref{s:tamebdls}, we discuss the key concept of $(\CG', \Gamma)$-bundles and its relation to $\CG$-bundles. In section \ref{s:purity}, we prove the purity theorem, Theorem \ref{thmpur},  which is the basis of the local constancy statement in Theorem \ref{intro-thmlocconst}. A key argument in the proof of Theorem \ref{thmpur} is due to Scholze. In section \ref{s:localt} we discuss the local analogue of $(\CG', \Gamma)$-bundles and give their classification.  In section \ref{s:locglob},  these local results are used to prove Theorem \ref{intro-exseq}. We also give a variant over a finite base field $k$. In section \ref{s:moduli} we prove Theorem \ref{intro-thmlocconst}. In the final section \ref{s:concl} we speculate how one could possibly express other objects associated to parahoric Bruhat-Tits group schemes over $X$ in terms of a presentation by a reductive group scheme over a Galois cover of $X$.

 Finally, in the 
  appendix,  B. Conrad gives a proof of the  Hasse principle for adjoint groups over function fields with finite field of constants.

 We thank   J.~Heinloth, J.~Hong, G. Prasad and P.~Scholze for helpful remarks.   We also thank B.  Conrad for providing the appendix. The second author presented the results of this paper at the conference ``Bundles and conformal blocks with a twist'' in June 2022 in Edinburgh. He thanks the organizers for the invitation. We also thank the referee for helpful remarks.

\smallskip

{\bf Notations.} Let $X$ be a curve, i.e., a geometrically connected smooth projective scheme of dimension one over a perfect field $k$. We denote by $k(X)$ or simply $K$ its function field. For a closed point $x$, we denote by $O_{X, x}$ its local ring and by $k(x)$ its residue field. We denote by $O_x$ the completion of $O_{X, x}$, by $K_x$ its fraction field, 
and by $O^h_{X, x}$, resp. $O^{sh}_{X, x}$, its henselization, resp. strict henselization.  If $\bar x$ is a geometric point over $x$, we denote by $O_{\bar x}$ the strict completion of $O_{X, x}$, a complete local ring with residue field $k(\bar x)$. We also write $K_{\bar x}$ for its fraction field. If $\CG$ is a scheme over $X$ (resp. over $\Spec(K)$), we write $\CG_x$ for $\CG\times_X\Spec(O_x)$ (resp. for $\CG\times_{\Spec(K)}\Spec(K_x)$).

\section{Tamely ramified Bruhat-Tits group schemes}\label{s:tameBT}
\subsection{Bruhat-Tits group schemes}
 
A \emph{Bruhat-Tits (BT) group scheme} over the curve $X$ is a smooth group scheme $\CG$ over $X$ such that its generic fiber $G=\CG\times_X \Spec(k(X))$ is reductive and such that, for every closed point $x\in X$,  the base change $\CG\times_X\Spec(O_x)$ is a Bruhat-Tits group scheme   over the completion $O_x$ of the local ring $O_{X, x}$ for $G\times_X\Spec(K_x)$.   This last condition means that $\CG\times_X\Spec(O_x)$ is a quasi-parahoric group scheme over $O_x$, i.e., $\CG\times_X\Spec(O_x)$ is a smooth affine group scheme such that $\CG(O_{\bar x})$ is a quasi-parahoric subgroup of  $G(K_{\bar x})$. Recall that a quasi-parahoric subgroup of  $G( K_{\bar x})$ is a bounded subgroup that contains a parahoric subgroup with finite index, cf. \cite{PRintls}.  A \emph{parahoric Bruhat-Tits group scheme} over $X$ is the neutral component of a Bruhat-Tits group scheme. Equivalently, it is a smooth group scheme $\CG$ over $X$ such that its generic fiber $G=\CG\times_X \Spec(k(X))$ is reductive and such that, for every closed point $x\in X$,  the base change $\CG\times_X\Spec(O_x)$ is a parahoric Bruhat-Tits group scheme.  
We also say that $\CG$ is a (parahoric) BT group scheme for $G$.

\begin{remark}\label{stabBT}
 \begin{altenumerate}
\item A particular kind of quasi-parahoric group scheme arises as the stabilizer of  a point ${\bf a}_{x}$ in the extended Bruhat-Tits building $ \sB^e(G\otimes_{k(X)} K_x, K_x)$, i.e.,  
$\CG (O_{\bar x})$ is the  stabilizer  in $G(K_{\bar x})$ of  ${\bf a}_{ x}$. A \emph{stabilizer Bruhat-Tits group scheme} over $X$ is a BT group scheme such that $\CG\times_X\Spec(O_x)$ is the stabilizer Bruhat-Tits group scheme corresponding to a point ${\bf a}_{x}$ in the extended Bruhat-Tits building $ \sB^e(G\otimes_{k(X)} K_x, K_x)$, for every $x\in X$.  Note that in general a parahoric group scheme is not necessarily of this form. In the sequel, we also write simply $ \sB^e(G, K_x)$ for $ \sB^e(G\otimes_{k(X)} K_x, K_x)$.
\item If $G$ is simply connected, then any quasi-parahoric group scheme is automatically connected, so it is a  parahoric BT group scheme and a stabilizer BT group scheme. 
\item Any reductive group scheme  over $X$ is a  parahoric BT group scheme and a stabilizer BT group scheme.  Indeed, this is a statement of Bruhat-Tits theory of group schemes over $O_x$. After an \'etale extension, we may assume that we are concerned with a split group. Then the assertion follows from \cite[4.6.22]{BTII}.  
\end{altenumerate} 

We decided to put the emphasis on stabilizer BT group schemes since they seem well-adapted to our way of giving a concrete presentation of BT group schemes over $X$, cf. Introduction. 
\end{remark}

\subsection{BT group schemes and coverings}\label{ss:BTcov}
Let $X'\to X$ be a  Galois cover, with finite Galois group $\Gamma$.  In other words, $X'$ is the normalization of $X$ in a finite (separable) Galois extension $K'$ of $k(X)$ with Galois group $\Gamma$. We allow the  field of constants of $X'$ to be a finite extension of $k$ (hence $X'$ is a curve in our sense over a finite extension $k'$ of $k$). For $x'\in X'$, we denote by $\pi(x')$, or simply $x$, its image in $X$. We denote by $\Gamma_{x'}\subset \Gamma$ the inertia subgroup at $x'$. Let $G$ be a reductive group over $k(X)$ and set  $G'=G\otimes_{k(X)} k(X')$. There are two constructions of BT group schemes for $G$, resp. $G'$.

\begin{altenumerate}
\item Let $\CG$ be a stabilizer  BT group scheme for $G$ over $X$. The base change group $G'$ acts on the extended BT building $\sB^e(G, K'_{x'})$, for every $x'\in X'$. Indeed, for this we identify  $\sB^e(G, K'_{x'})$ with  $\sB^e(G', K'_{x'})$. Furthermore, for any $x'\in X'$, the base change $\CG_x\otimes_{O_x} O'_{x'}$ fixes the point ${\bf a}_{ x}\in\sB^e(G, K_{x})$ corresponding to the  group $\CG_{ x}$. Here we consider ${\bf a}_{ x}$ as a point of  $\sB^e(G, K'_{x'})$, using the natural embedding $\sB^e(G, K_{x})\to\sB^e(G, K'_{x'})$. Hence, denoting by $\CG'_{{\bf a}_{ x}}$ the stabilizer group scheme for $G'\otimes_{k(X)}K'_{x'}$ defined by the point  ${\bf a}_{ x}\in\sB^e(G', K'_{x'})$, we obtain  a morphism extending the identity in the generic fibers,
 \begin{equation}\label{mapupst}
 \CG_{{\bf a}_{x}}\otimes_{O_x} O'_{x'}\to\CG'_{{\bf a}_{  x}} . 
 \end{equation} This morphism is an isomorphism when $x'$ is outside the ramification locus $R'$ of $X'\to X$. Hence, by Beauville-Laszlo glueing along the finitely many points of $R'$, there exists a  unique smooth group scheme $\CG'$ over $X'$ such that $\CG'_{|(X'\setminus R')}=\CG\times_X (X'\setminus R')$ and  such that the map $\CG\times_X\Spec(O'_{x'})\to\CG'\times_{X'}\Spec(O'_{x'})$ is identified with the  map \eqref{mapupst},  for every $x'\in X'$. Hence $\CG'$ is a stabilizer BT group scheme for $G'$ over $X'$.

\item Conversely,  assume that the Galois covering $X'/X$ is tame, i.e. all the inertia subgroups $\Gamma_{x'}$ of $\Gamma$ are of order prime to ${\rm char}\, k$. Let  $\CG'$ be a stabilizer BT group scheme for $G'$  on $X'$ obtained from a  collection of points ${\bf a}_{x}$ in the buildings for $G\otimes_K K_x$. Then $\CG'$ has  a $\Gamma$-action, inducing in the generic fiber the given $\Gamma$-action on $G'$. Then, using \cite[(1.7.6)]{BTII},
we see that $\CG=\Res_{X'/X}(\CG')^\Gamma$ is a stabilizer BT group scheme for $G$ over $X$ for the points ${\bf a}_x$.  Indeed, the generic fiber of $\CG$ is identified with $G$, the $\Spec(O_{\bar x})$-points of $\CG$ give the correct stabilizer subgroups for all $x$ and the smoothness of $\CG$ follows from Edixhoven's lemma \cite{Edix}. 
\end{altenumerate}
\begin{definition}\label{defrelated}
Let $X'/X$ be a tame Galois cover with Galois group $\Gamma$. Let $G$ be a reductive group over $K$ and $G'$ its base change over $K'$. Let $\CG$, resp. $\CG'$ be stabilizer BT group schemes for $G$, resp. $G'$ over $X$, resp. $X'$. Then $\CG$ and $\CG'$ are said to be \emph{$\Gamma$-related} if both are the stabilizer groups related to the same collection of points $\{{\bf a}_x\in\sB^e(G, K_{x})\mid x\in X\}$. In this case $\CG=\Res_{X'/X}(\CG')^\Gamma$. 
\end{definition}

\begin{remark}Note that for any stabilizer BT group scheme $\CG$ for $G$, there is a unique stabilizer BT group scheme $\CG'$ for $G'$ such that $\CG$ and $\CG'$ are $\Gamma$-related. On the other hand, a $\Gamma$-invariant stabilizer BT group $\CG'$ for $G'$  is not in general part of a pair $(\CG, \CG')$ of $\Gamma$-related  stabilizer BT group schemes  (although this does hold if $G$ is semi-simple simply connected).  This phenomenon is illustrated by the example of the projective linear group $G$ of a quaternion algebra over a local field $F$: after base change by a quadratic extension $F'/F$, the group $G'=G\otimes_F F'$ is isomorphic to $\PGL_2$. Then both the unique $\Gamma$-invariant Iwahori subgroup of $G'$ and its normalizer are $\Gamma$-invariant stabilizer groups but only the latter is the stabilizer of a $\Gamma$-invariant point in the building and hence is part of a $\Gamma$-related pair of stabilizer groups. 
\end{remark}

\begin{proposition}\label{BTgps}
 Let $X'/X$ be a tame Galois cover with Galois group $\Gamma$, and $G$ and $G'$ as above. 

a)  Any parahoric  group scheme $\CG$ over $X$ is of the form $(\Res_{X'/X}(\CG')^\Gamma)^\circ$, where  $\CG'$ is the  stabilizer Bruhat-Tits group scheme for $G'$ over $X'$ which is $\Gamma$-related to a stabilizer group scheme $\CG^\sharp$ with $\CG$ as its connected component.  Here the parahoric  group scheme $(\CG')^\circ$ with its $\Gamma$-action is uniquely  determined from $\CG$. 

b) Assume that $G$ or, equivalently,  $G'$ is semi-simple simply connected. Then, starting with $\CG$  and applying the procedure in i) to obtain the parahoric BT group scheme $\CG'$ with $\Gamma$-action, and then applying procedure ii) to $\CG'$ with its $\Gamma$-action gives back $\CG $.  Conversely, starting with a collection of points $\{{\bf a}_x\in\sB^e(G, K_{x})\mid x\in X\}$ and a parahoric BT group scheme $\CG'$ with $\Gamma$-action such that $\CG'_{x'}$ is the stabilizer group for ${\bf a}_{\pi(x)}\in \sB(G, K'_{x'})$, then, applying the procedure ii)  gives  a parahoric BT group scheme $\CG$ which, applying procedure i), gives back $\CG'$ with its $\Gamma$-action.

\end{proposition}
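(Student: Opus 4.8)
The plan is to reduce the statement to the two constructions i), ii) of \S\ref{ss:BTcov}, to the Remark on uniqueness of $\Gamma$-related partners stated just before the Proposition, and to the simplifications for simply connected groups recorded in Remark \ref{stabBT}.

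\emph{Part a).} First I would produce, from the given parahoric group scheme $\CG$ over $X$, a \emph{stabilizer} BT group scheme $\CG^\sharp$ over $X$ with $(\CG^\sharp)^\circ=\CG$. Since the generic fibre $G$ is reductive and being reductive is an open condition on the fibres of a smooth affine group scheme, $\CG$ is reductive over a dense open $U\subseteq X$, hence already a stabilizer BT group scheme over $U$ by Remark \ref{stabBT}(iii). At each of the finitely many remaining points $x$, the parahoric $\CG_x$ is the connected stabilizer of a facet $F_x$ of $\sB^e(G,K_x)$; choosing ${\bf a}_x$ to be the barycentre of $F_x$, the stabilizer group scheme $\CG_{{\bf a}_x}$ of ${\bf a}_x$ is a quasi-parahoric group scheme (Remark \ref{stabBT}(i)) with the same generic fibre as $\CG_x$ and with $(\CG_{{\bf a}_x})^\circ=\CG_x$. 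Beauville-Laszlo glueing along $X\setminus U$, exactly as in construction i), then yields a smooth affine group scheme $\CG^\sharp$ over $X$ with generic fibre $G$, equal to $\CG$ over $U$ and to $\CG_{{\bf a}_x}$ at the remaining points; by construction it is a stabilizer BT group scheme, and passing to the fibrewise neutral component (an open subgroup scheme) gives $(\CG^\sharp)^\circ=\CG$. Now apply the Remark above to $\CG^\sharp$: there is a unique stabilizer BT group scheme $\CG'$ for $G'$ over $X'$ that is $\Gamma$-related to $\CG^\sharp$, and then $\CG^\sharp=\Res_{X'/X}(\CG')^\Gamma$ by Definition \ref{defrelated}; taking neutral components gives $\CG=(\Res_{X'/X}(\CG')^\Gamma)^\circ$. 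For the canonicity assertion, although $\CG^\sharp$ and $\CG'$ do depend on the auxiliary choice of points ${\bf a}_x\in F_x$, the $\Gamma_{x'}$-fixed facet $F_x$ lies inside a single facet $F'_{x'}$ of $\sB^e(G',K'_{x'})$ by tame descent for Bruhat-Tits buildings, so $(\CG')^\circ_{x'}$ is the parahoric group scheme attached to $F'_{x'}$, which depends only on $F_x$ and hence only on $\CG$; since moreover the $\Gamma$-action on the smooth group scheme $(\CG')^\circ$ is determined by its restriction to the schematically dense generic fibre $G'$, the pair consisting of $(\CG')^\circ$ and its $\Gamma$-action is determined by $\CG$.

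\emph{Part b).} When $G$ (equivalently $G'$) is semisimple simply connected, Remark \ref{stabBT}(ii) forces every quasi-parahoric group scheme over an $O_x$ or an $O'_{x'}$ to be parahoric and connected; in particular, in the notation of a) we have $\CG^\sharp=\CG$ and $\CG'=(\CG')^\circ$, and $\Res_{X'/X}(\CG')^\Gamma$ has connected fibres, so it equals its own neutral component. Granting this, the first assertion is formal: construction i) applied to the parahoric (hence stabilizer) BT group scheme $\CG$ \emph{is} the construction of the stabilizer BT group scheme $\CG'$ over $X'$ that is $\Gamma$-related to $\CG$, and construction ii) applied to $\CG'$ with this $\Gamma$-action gives $\Res_{X'/X}(\CG')^\Gamma=\CG$ by Definition \ref{defrelated}. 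For the converse, a parahoric BT group scheme $\CG'$ over $X'$ equipped with a $\Gamma$-action whose localizations $\CG'_{x'}$ are the stabilizer groups of the points ${\bf a}_x$ (with $x=\pi(x')$) is, tautologically, a stabilizer BT group scheme for $G'$ obtained from the collection $\{{\bf a}_x\}$; construction ii) then applies and $\CG:=\Res_{X'/X}(\CG')^\Gamma$ is a stabilizer, hence (as $G$ is simply connected) parahoric, BT group scheme for $G$ over $X$ that is $\Gamma$-related to $\CG'$. Applying construction i) to $\CG$ returns the $\Gamma$-related partner of $\CG$, which is $\CG'$ by the uniqueness in the Remark; and the recovered $\Gamma$-action coincides with the given one, since both restrict to the given $\Gamma$-action on $G'$, which is schematically dense in the flat $X'$-scheme $\CG'$.

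The step I expect to be the crux is the construction of $\CG^\sharp$ in part a): $\CG$ by itself pins down only the facets $F_x$, not their enlarged stabilizers, so one must check that glueing the local stabilizer models is legitimate and that the fibrewise neutral component of the result is exactly $\CG$, and then invoke the tame-descent comparison of buildings to see that $(\CG')^\circ$ does not depend on the choices made. Everything else, and all of part b), follows by unwinding the definitions of $\Gamma$-relatedness and of constructions i), ii).
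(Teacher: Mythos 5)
Your overall route is the same as the paper's: the proof given there is literally the one line ``follows from the discussion above,'' i.e.\ from constructions i) and ii) of \S\ref{ss:BTcov}, Definition \ref{defrelated}, and the remark on the uniqueness of $\Gamma$-related partners, and your write-up is an expansion of exactly that discussion. Part b), and the existence half of part a) (producing $\CG^\sharp$ by taking the stabilizer of the barycenter ${\bf a}_x$ of the facet $F_x$ at the finitely many non-reductive points, Beauville--Laszlo glueing, and passing to fibrewise neutral components), are fine.

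The canonicity argument in part a), however, contains a false step, and it is precisely the step you flag as the crux. The claim that the facet $F_x$ of $\sB^e(G,K_x)$ ``lies inside a single facet $F'_{x'}$ of $\sB^e(G',K'_{x'})$ by tame descent'' is not what descent gives: the embedding $\sB^e(G,K_x)\hookrightarrow\sB^e(G',K'_{x'})$ identifies the source with the $\Gamma_{x'}$-fixed locus, but the facet structure of the target \emph{refines} that of the source. The paper records this in the proof of Proposition \ref{BThyper} (a facet over $K$ is contained in a \emph{union of finitely many} facets over $K'$), and Example \ref{exSL2} makes it concrete: for $G=\SL_2$ and $K'_{x'}/K_x$ totally ramified of degree $n$, the alcove $(0,1)$ is subdivided into $n$ alcoves over $K'_{x'}$. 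As a result, the independence you are trying to establish actually fails: for $n$ even, the barycenter $\tfrac12$ is a hyperspecial vertex over $K'_{x'}$ while a generic interior point of $(0,1)$ lies in the interior of an alcove over $K'_{x'}$, so the two stabilizers in $G'(K'_{x'})$ have non-isomorphic neutral components, even though both points have the same stabilizer (the Iwahori) in $\SL_2(K_x)$. Thus $(\CG')^\circ_{x'}$ genuinely depends on where in the interior of $F_x$ one places ${\bf a}_x$, not only on $\CG_x$. The uniqueness assertion has to be read as coming from the canonical normalization you already made at the outset --- ${\bf a}_x$ is the barycenter of $F_x$, equivalently $\CG^\sharp_x$ is the full stabilizer group scheme of the facet $F_x$ --- after which $\CG^\sharp$ is canonical and $\CG'$ is pinned down by the uniqueness of the $\Gamma$-related partner of $\CG^\sharp$; it does not come from an independence-of-choices statement, and the single-facet claim should be deleted.
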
 
\begin{proof}  
Follows from the discussion above.
\end{proof}

\begin{remark}\label{RemarkReductive} In the above set-up, if $\CG$ is reductive and $\CG$ and $\CG'$ are $\Gamma$-related, then $\CG'$ is also reductive. This follows from the fact that all hyperspecial points of $\sB^e(G, K_{x})$ remain hyperspecial in $\sB^e(G, K'_{x'})$.
\end{remark}

\begin{theorem}\label{descred}
  Assume that $G$ is absolutely simple and simply connected. Also assume  $p\neq 2$ if $G$ is a classical group\footnote{Note that ``$G$ classical'' excludes the trialitarian forms of $D_4$.},  $p\neq 2,3$ if $G$ is not of type   $E_8$, and $p\neq 2, 3, 5$ if $G$ is of type $E_8$.  Let $\CG$ be a Bruhat-Tits group scheme for $G$ over $X$. After a finite extension of the base field $k$, there exists a tame Galois covering $X'/X$ of curves over $k$ with Galois group $\Gamma$ and a reductive group scheme $\CG'$ over $X'$  such that $\CG$ and $\CG'$ are $\Gamma$-related.
\end{theorem}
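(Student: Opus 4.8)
Since $G$ is simply connected, $\CG$ is automatically a \emph{parahoric} — hence a stabilizer — Bruhat--Tits group scheme (Remark~\ref{stabBT}(ii)), so it is the stabilizer group scheme attached to a collection of points $\ome_x\in\sB(G,K_x)$, $x\in X$. The locus in $X$ where the fibres of $\CG$ are reductive is open and contains the generic point, so its complement $S\subset X$ is a finite set of closed points; equivalently, $\ome_x$ is hyperspecial for every $x\notin S$. The task is to ``repair'' $\CG$ at the finitely many points of $S$ by passing to a suitable tame cover of $X$.

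\textbf{The local input.} The heart of the argument is the local statement of Proposition~\ref{BThyper}. For each $x\in S$ it provides a finite tamely ramified extension $L_x/K_x$, which may be taken cyclic of some degree $e_x$ prime to $p$, such that under the natural embedding $\sB(G,K_x)\hookrightarrow\sB(G,L_x)=\sB(G\otimes_{K_x}L_x,L_x)$ the point $\ome_x$ becomes \emph{hyperspecial}; equivalently, the stabilizer group scheme of $\ome_x$ over $O_M$ is reductive for every tamely ramified $M/K_x$ whose ramification index is divisible by $e_x$ (hyperspeciality persists under further base change). Morally, a ramified base change rescales the affine root datum, so a point which is only special — or not even a vertex — over $K_x$ can become a hyperspecial vertex over $L_x$; the exceptional characteristics excluded in the statement are exactly those which must be avoided in order to keep these $L_x$ tame (e.g.\ if $\CG$ is an Iwahori group scheme of $\SL_2$ one needs $L_x=K_x(\varpi_x^{1/2})$, which is tame iff $p\neq 2$). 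I expect this proposition to be the main obstacle; everything after it is a formal globalization built on the dictionary of \S\ref{ss:BTcov}.

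\textbf{Globalization.} Let $n$ be the least common multiple of the integers $e_x$, $x\in S$; it is prime to $p$. After a finite extension of the ground field $k$ we may assume that $\mu_n\subset k$ and that $X$ has enough closed points outside $S$ for weak approximation to apply, and then choose $f\in K^\times$ with $\ord_x(f)=n/e_x$ for every $x\in S$. As $e_x\mid n$ for all $x$, one has $\gcd\{\,\ord_x(f):x\in S\,\}=\gcd\{\,n/e_x:x\in S\,\}=n/{\rm lcm}\{e_x\}=1$; hence, if $f^\ell\in(K^\times)^n$ with $\ell\mid n$, then $n$ divides $\ell\cdot\ord_x(f)$ for all $x\in S$ and therefore divides $\ell\cdot\gcd\{\,\ord_x(f):x\in S\,\}=\ell$, so $\ell=n$. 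Thus $K':=K(f^{1/n})$ is a cyclic extension of $K$ of degree exactly $n$, and it is tamely ramified everywhere because $p\nmid n$. Let $\pi\colon X'\to X$ be the normalization of $X$ in $K'$, with Galois group $\Gamma=\Gal(K'/K)\cong\BZ/n\BZ$. By construction the ramification index of $\pi$ over each $x\in S$ is $e_x$, while the (possibly nontrivial) ramification over points outside $S$ is still tame and harmless.

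\textbf{Conclusion.} Let $\CG'$ be the stabilizer Bruhat--Tits group scheme for $G'=G\otimes_K K'$ over $X'$ attached to the same collection $\{\ome_x\}$, i.e.\ the one $\Gamma$-related to $\CG$. By Definition~\ref{defrelated} and Proposition~\ref{BTgps} one has $\CG=\Res_{X'/X}(\CG')^\Gamma$, and $\CG'_{x'}$ is the stabilizer of $\ome_{\pi(x')}$ in $G(K'_{x'})$; it only remains to check that $\CG'$ is reductive. If $x'$ lies over a point $x\notin S$, then $\CG_x$ is reductive and hence so is $\CG'_{x'}$, by Remark~\ref{RemarkReductive}. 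If $x'$ lies over a point $x\in S$, then $K'_{x'}/K_x$ is tamely ramified of ramification index $e_x$, so by the local input $\ome_x$ is hyperspecial in $\sB(G,K'_{x'})$ and $\CG'_{x'}$ is again reductive. Thus $\CG'$ is a reductive group scheme over $X'$ which is $\Gamma$-related to $\CG$, which is precisely the assertion of the theorem. \qed
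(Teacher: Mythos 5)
Your global strategy coincides with the paper's (reduce to Proposition~\ref{BThyper} at the finite non-reductive locus $S$, then produce a tame cover realizing the local extensions, and conclude via the dictionary of \S\ref{ss:BTcov} and Remark~\ref{RemarkReductive}), and the Kummer-theoretic bookkeeping itself (the degree computation for $K(f^{1/n})$, the extra tame ramification outside $S$ being harmless) is fine. The problem is that you use the local input in a strictly stronger form than Proposition~\ref{BThyper} provides, and this stronger form is exactly what your globalization rests on. The proposition produces a tame \emph{Galois} extension $F'_x/K_x$, built as a compositum of an unramified extension (making the group residually split and quasi-split), the splitting field, and a further ramified extension; it does not assert that this extension can be taken cyclic of some degree $e_x$, and it certainly does not assert that the stabilizer of ${\bf a}_x$ becomes reductive over \emph{every} tame $M/K_x$ whose ramification index is divisible by $e_x$. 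Your cover $K'=K(f^{1/n})$ is controlled at $x\in S$ only through $\ord_x(f)$, i.e.\ only through the ramification index of $K'_{x'}/K_x$; the unit part of $f$, hence the isomorphism class of the local extension $K'_{x'}$, is uncontrolled. For $G\otimes_K K_x$ split the "only the ramification index matters" claim is true (the affine structure of the apartment just rescales), and if $k$ is algebraically closed it also holds because $K_x$ then has a unique tame extension of each degree, so $K'_{x'}$ automatically contains $L_x$. But in the stated generality ($k$ perfect, only a finite extension allowed, e.g.\ $k$ finite) it is unjustified: for a ramified form — a ramified $\mathrm{SU}_n$, or a ramified triality form of $D_4$ — hyperspeciality of ${\bf a}_x$ over $M$ forces $G\otimes M$ to become unramified, and whether this happens depends on the actual extension $M$ (e.g.\ which square root of which uniformizer it contains), not merely on its ramification index. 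Moreover, if $G\otimes_K K_x$ is not residually split/quasi-split, one needs an unramified (constant field) extension before any ramified rescaling helps; your finite extension of $k$, introduced only to get $\mu_n\subset k$, does not serve this purpose, whereas the paper's proof begins precisely with such an extension $k_1/k$.

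This is why the paper globalizes differently: rather than prescribing ramification indices, it realizes the exact local extensions $F'_x$ inside a global extension, by approximating the coefficients of monic polynomials defining the $F'_x$ (weak approximation plus Krasner's lemma), passing to the field $K''$ generated by the roots and then to its Galois hull $K'$; this controls the completions $K'_{x'}$ up to conjugacy, after which the argument you give in your concluding paragraph applies verbatim. Your proof is essentially correct if you restrict to $k$ algebraically closed (where tame local extensions are unique, so prescribing $\ord_x(f)$ does pin down $K'_{x'}$). To repair it in general you would have to (i) first perform the constant-field extension making the groups at $x\in S$ residually split and quasi-split, (ii) know that the resulting local extensions can be taken of Kummer type, and (iii) use weak approximation on $f$ itself (not just on its valuations) to force $K'_{x'}$ to be the prescribed extension — at which point you have essentially reconstructed the paper's Krasner argument.
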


This is a global consequence of the following statement of Bruhat-Tits theory which seems new (related statements appear in work of M. Larsen \cite[Lem. 2.4]{Lar} and P. Gille \cite[Lem. 2.2]{Gi}, see also \cite[Lem. 4.3]{Cotner}).

\begin{proposition}\label{BThyper}
Let $G$ be a semi-simple simply connected group over a local field\footnote{By a local field we mean a field which is complete for a discrete valuation, with perfect residue field.} $F$, and let $\CG$ be a parahoric group scheme for $G$ over $O_F$. Then there exists a finite extension $F'/F$ such that $G'=G\otimes_F F'$ is split, and  a hyperspecial parahoric group scheme $\CG'$ for  $G'$ such that $\CG(O_F)=G(F)\cap \CG'(O_{F'})$ and $\CG(O_{\breve F})=G({\breve  F})\cap \CG'(O_{{\breve F}'})$. Here $\breve F$, resp. $\breve F'$, denotes the completion of the maximal unramified extension of $F$, resp. $F'$ (in a common algebraic closure).

 Assume  $p\neq 2$ if $G$ is a classical group, and $p\neq 2,3$ if $G$ admits no  factor of type $E_8$, and $p\neq 2, 3, 5$ if $G$ admits a factor of type $E_8$. Then $F'$ can be chosen to be tamely ramified, provided that $G$ is \quash{absolutely  simple or $G$ is}tamely ramified.
 \end{proposition}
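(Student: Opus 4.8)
The plan is to translate the statement into a question about the Bruhat--Tits building of $G$ and to reduce it to the split case by a single ``rescaling'' extension. Since $G$ is semi-simple and simply connected, $\CG$ is the Bruhat--Tits group scheme attached to a facet $\fkf$ of $\sB(G,F)=\sB^e(G,F)$, and $\CG(O_F)$ (resp.\ $\CG(O_{\breve F})$) is the pointwise fixer of $\fkf$ in $G(F)$ (resp.\ $G(\breve F)$) --- equivalently, for a sufficiently general point $b$ in the open facet, it is $\mathrm{Stab}_{G(F)}(b)$ (resp.\ $\mathrm{Stab}_{G(\breve F)}(b)$), using that for simply connected groups the set-wise stabilizer of a facet coincides with the parahoric. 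So it suffices to produce a finite extension $F'/F$ splitting $G$ and a point $b$ in the open facet $\fkf$ which is a \emph{hyperspecial vertex} of $\sB(G,F')=\sB(G',F')$; one then takes $\CG'$ to be the reductive (Chevalley) Bruhat--Tits group scheme over $O_{F'}$ attached to $b$. Granting this, both identities are formal: $\CG'(O_{F'})=\mathrm{Stab}_{G(F')}(b)$ and $\CG'(O_{\breve F'})=\mathrm{Stab}_{G(\breve F')}(b)$ because a hyperspecial parahoric of a simply connected group is the full stabilizer of its vertex, while the $G(F)$-action on $\sB(G,F)\subset \sB(G,F')$ (resp.\ the $G(\breve F)$-action on $\sB(G,\breve F)\subset\sB(G,\breve F')$) is the restriction of the $G(F')$-action (resp.\ $G(\breve F')$-action), so intersecting with $G(F)$ (resp.\ $G(\breve F)$) gives back $\mathrm{Stab}_{G(F)}(b)=\CG(O_F)$ (resp.\ $\mathrm{Stab}_{G(\breve F)}(b)=\CG(O_{\breve F})$).

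The heart is the split case. Choose a maximal split torus $T\subset G_{F_1}$ over a splitting field $F_1$, a hyperspecial point $x_0$, and identify the apartment with $X_*(T)\otimes\BR$ based at $x_0$; after conjugating we may assume the facet of $\CG$ lies in this apartment, inside the fundamental alcove, with vertices among $v_0=x_0$ and $v_i=\tfrac1{a_i}\varpi_i^\vee$, where $a_i$ is the Coxeter label (mark) of the affine Dynkin diagram at node $i$. The hyperspecial points of $\sB(G,F_1)$ in the apartment form a translate of a lattice $\Lambda$ with $Q^\vee\subseteq\Lambda\subseteq P^\vee$, and the hyperspecial alcove-vertices are exactly the $v_i$ with $a_i=1$. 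A point $b=\sum_j c_j v_{i_j}$ in the open facet (with $c_j>0$, $\sum c_j=1$) then becomes hyperspecial after a totally ramified extension $F'/F_1$ of degree $e$ precisely when $e\,c_j/a_{i_j}\in\BZ$ for all $j$, i.e.\ when $e$ lies in the numerical semigroup generated by the $a_{i_j}$ with each generator used at least once; this can be arranged with $e$ \emph{prime to $p$} as soon as $p\nmid\gcd_j a_{i_j}$, hence certainly whenever $p$ divides no mark of the affine diagram of $G_{F_1}$. Renormalising the valuation by $F'$ scales all affine-root levels by $e$, so such a $b$ is genuinely a hyperspecial vertex of $\sB(G,F')$. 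This proves the first assertion of the Proposition (with $F'$ finite but not yet controlled).

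For the tameness refinement one combines three things. First, $F_1/F$ can be chosen tamely ramified because $G$ is assumed tamely ramified --- here one also needs the $\ast$-action on the Dynkin diagram to be tame, which for non-trialitarian $D_4$ is automatic once $p\neq 2$ (the order of a diagram automorphism then divides $2$), the trialitarian case being excluded. Second, by the previous paragraph the rescaling degree $e$ can be taken prime to $p=\charac k$ provided $p$ avoids the marks of the affine Dynkin diagram of the split form of $G$; running through the classification these mark-primes are $\{2\}$ for the classical types, $\{2,3\}$ for $E_6,E_7,F_4,G_2$, and $\{2,3,5\}$ for $E_8$ --- exactly the list in the statement. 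Third, one must carry out the non-split reduction: $\sB(G,F)\subset\sB(G,F_1)$ is Galois-equivariant, the facet of $\CG$ maps into (the closure of) a facet of $\sB(G,F_1)$, and, choosing $b$ generic in it, all the intersection identities above follow from tame Galois descent for parahoric group schemes.

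The step I expect to be the main obstacle is the second one, together with the bookkeeping behind it: verifying that \emph{every} facet of the building --- not just vertices --- contains a point that scales to hyperspecial under a prime-to-$p$ extension, which is where the combinatorics of the marks enters and where the exclusion of the precise small primes becomes unavoidable (a vertex with mark divisible by $p$ genuinely cannot be made hyperspecial by a tame extension). A secondary technical point is to ensure that the single chosen point $b$ remains in an open facet simultaneously in $\sB(G,\breve F)$, $\sB(G,F_1)$ and $\sB(G,F')$, so that the descent statements for $\CG(O_F)$ \emph{and} $\CG(O_{\breve F})$ both go through; the split case itself is, by contrast, essentially the numerical-semigroup argument above and requires no restriction on $p$.
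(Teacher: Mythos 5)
Your overall strategy (rational coordinates in the building, rescaling by a totally ramified extension of degree prime to $p$, bookkeeping via the marks of the affine diagram) is the same as the paper's, and your split-case numerical criterion is a correct and even sharper version of the paper's ``sufficiently divisible denominators'' step. The genuine gap is in your reduction for the tame refinement: you insist on producing the auxiliary point $b$ \emph{inside the open facet $\fkf$ of $\sB(G,F)$}. In general no such point can be made hyperspecial by a tame extension, even under the stated hypotheses. Concretely, take $G=\SL_1(D)$ for a central division algebra $D$ of degree $d$ with $p\mid d$, $p$ odd, and $\CG$ the Iwahori: $G$ is classical and splits over an unramified (hence tame) extension, so the Proposition applies, but $\sB(G,F)$ is a single point, namely the barycenter of an alcove of $\sB(G,\breve F)$, whose coordinates have denominator $d$ divisible by $p$; there is literally nothing in $\fkf$ with prime-to-$p$ denominators. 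This is exactly the phenomenon isolated in the paper's Remark 2.11(c), where it is shown that the \emph{stronger}, Galois-equivariant statement (Proposition 2.10, requiring $p\nmid 2(n+1)$ in type $A_n$) fails in this example; so the obstruction you would run into is not a technicality but the precise reason the two propositions have different hypotheses. More generally, once $b$ is required to lie in $\fkf$, the barycentric coordinates $c_j$ are \emph{not} free: Galois-fixedness (for the unramified part of the splitting extension, acting through automorphisms of the affine Dynkin diagram) imposes linear relations forcing denominators divisible by the order of that diagram action, which can be divisible by $p$ in type $A$.

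The repair — and this is what the paper's proof actually does — is to use only that, for simply connected $G$, the parahoric $\CG(O_F)$ (resp.\ $\CG(O_{\breve F})$) is the stabilizer in $G(F)$ (resp.\ $G(\breve F)$) of \emph{any} point of the corresponding open facet over a finite unramified extension $K$ of $F$ (indeed, by tame descent plus $\mathrm{Stab}=\mathrm{parahoric}$ over the splitting field, of any point of the containing open facet there). One is therefore allowed to move the auxiliary point off $\sB(G,F)$, inside that larger open facet, where the coordinates become genuinely free and your numerical-semigroup/marks argument does apply with $e$ prime to $p$; the moved point is still fixed by the ramified part of the Galois group, which is all that the conclusion of Proposition 2.7 needs (no Galois-stability over $F$ is claimed). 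Your ``non-split reduction'' sentence (``choosing $b$ generic in it'') gestures at this but is ambiguous, and as framed by your first paragraph it pins $b$ to $\fkf$, where the argument provably cannot be completed; making the unramified-versus-ramified distinction explicit, and justifying that all points of the larger open facet have the same $G(F)$- and $G(\breve F)$-stabilizers, is the missing step. (Your secondary worry about $b$ lying in open facets simultaneously over $\breve F$, $F_1$, $F'$ is, by contrast, easily handled and not the real issue.)
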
 

\begin{proof}(following an e-mail of G.~Prasad) We may assume that $G$ is  simple.
Let $K$ be a finite unramified extension  of $F$ such that the relative rank of $G$  does not increase if we make a further unramified extension of $K$. Then $G$ is quasi-split over $K$ since by Steinberg's theorem it is quasi-split over the maximal unramified extension of $F$. Let $\Gamma$ denote the Galois group of $K/F$. The building of $G(F)$ is the fixed point set under the  action of $\Gamma$ on   the building of $G(K)$. Now every vertex  in the building of $G(F)$ is just the barycenter of a minimal $\Gamma$-stable facet of the building of $G(K)$, cf. \cite[Thm. 5.1.25]{BTII}. Therefore, its coordinates in terms of a basis of the affine root system (determined by an alcove in an apartment containing the facet) are rational numbers. Hence we may represent the parahoric group $\CG(O_F)$ as the stabilizer group of a point ${\bf a}_\CG$ in the building of $G(K)$ with rational coordinates. We note that if $k$ is algebraically closed, then this first step does not occur.

Now there is a minimal Galois extension $K'$  of  $K$ over which $G$ splits.
  Let $\Theta$ be the Galois group of $K'/K$. Every facet of the building of $G(K)$ consists of  $\Theta$-fixed points contained in a union of finitely many $\Theta$-stable facets of the building of $G(K')$.

From the above, we conclude that the point ${\bf a}_\CG$ in  the building of $G(F)$ has rational coordinates  in terms of any basis of the affine root system of $G/K'$ (determined by a chamber in the apartment containing the vertex of the building of  $G(F)$).

 We claim that the point ${\bf a}_\CG$ is special in the building of $G(F')$, where $F'$ is a suitable   ramified extension of $K'$. (Note that for a split group, special is the same as hyperspecial.)   This statement is also proved in \cite[Lem. 2.4]{Lar}. Let us see what an extension $F'$ of ramification degree $n$ of $K'$  does to an apartment of the building of $G(K')$. Since $G/K'$ is split, any such apartment is also an apartment of the building of $G(F')$, but the facets get subdivided--for example every $1$-dimensional facet gets subdivided into $n$ equal-length facets. Now since, as observed above, the coordinates of  ${\bf a}_\CG$ are rational numbers, by choosing $F'$ of sufficiently divisible ramification degree over $K'$, we can ensure that in the corresponding  building of $G(F')$ every root hyperplane has a parallel root hyperplane that passes through ${\bf a}_\CG$, making ${\bf a}_\CG$ special. 
 
 We now want to find a tamely ramified extension $F'/K$ with the required properties, if $G$ splits over a tame extension of $F$.  Let us first assume that $G$ is absolutely  simple. Then the splitting field $K'$ has degree $1$, $2$, $3$ or $6$ over $K$, and for a classical group degree  at most $2$. We may assume that ${\bf a}_\CG$ lies in the fundamental alcove $\frak a$ inside the standard apartment.  The vertices of $\frak a$ have rational coordinates, where the denominators only involve the prime number $2$ if $G$ is of classical type, resp. $2,3$ if $G$ is not of type $E_8$, resp. $2,3,5$ if $G$ is of type $E_8$.   Hence if ${\bf a}_\CG$ is a vertex, by choosing $F'/K'$ to have ramification degree only containing the above prime factors, we may achieve that ${\bf a}_\CG$ is a special vertex in the building of $G(F')$, comp. also \cite[Lem. 2.2]{Gi}. If $G$ is a classical group, since we excluded $p=2$, the extension $F'/F$ is tame. A similar argument applies if $G$ is not of type $E_8$ (in which case we exclude $p=2,3$), or if   $G$ is of type $E_8$ (in which case we exclude $p=2,3, 5$). If ${\bf a}_\CG$ lies in the interior of a simplex of positive dimension, then, after changing ${\bf a}_\CG$ inside the interior of this simplex, we find a tamely ramified extension $F'$ such that ${\bf a}_\CG$ is a vertex and then proceed as before.  Note that ${\bf a}_\CG$ is fixed under the Galois group of $F'/K$, hence the  parahoric group scheme $\CG'$ corresponding to the vertex ${\bf a}_\CG$ in the building of $G$ over $F'$ is equipped with an action of the Galois group extending the action on $G'=G\otimes_K F'$. 
 
 Now let $G$ be tamely ramified but not necessarily absolutely simple. We can quickly reduce to the case that $G$ is  simple.  Then we may write $G=\Res_{\wt F/F}(H)$, where $\wt F/F$ is a tamely ramified extension and $H$ is an absolutely simple group over $\wt F$ (which splits over a tame extension). Now we apply the argument above to $H/\wt F$ and the parahoric for $H$ corresponding to  ${\bf a}_\CG\in\sB(G, F)=\sB(H, \wt F)$ and deduce the assertion for $G/F$ and the parahoric $\CG$.
\end{proof} 

\begin{proof}[Proof of Theorem \ref{descred}]  Recall that we now assume that $G$ is absolutely simple. Note that under our assumptions on $p$, $G$ splits over a tame extension of $K=k(X)$.
Let $S$ be the finite set of points of $X$ such that $\CG$ is not reductive.   There is a finite extension $k_1$ of $k$ with the following property: For each $x\in S$, the reductive group $G\otimes_K K_{x}$ has the same relative rank over the unramified extension  $K_{x,1}$
of $K_x$ with  residue field $k(x)k_1$, as over the maximal unramified extension of $K_x$ in a separable closure $K_x^{\rm sep}$. Then $G\otimes_KK_{x,1}$ is residually split and quasi-split, for all $x\in S$. We now consider the group scheme $\CG_1=\CG\times_XX_1$ over the base change $X_1=X\otimes_kk_1$
 (which is a curve over $k_1$ in our sense). For simplicity, we denote $X_1$ again by $X$ and omit the subscript $1$ in all that follows. We will apply Proposition \ref{BThyper} and its proof to $\CG\times_X\Spec(O_x)$, $F=K_x$, for $x\in S$: There are tame Galois extensions  $\{F'_x/K_x\mid x\in S\}$ such that $\CG\times_X\Spec(O_x)$ is the stabilizer group scheme of a point ${\bf a}_{\CG, x}$ in the building of $G(K_x)$ and ${\bf a}_{\CG,x}$ is hyperspecial when considered as a point of the building of $G(F'_x)$. In fact, the argument in the proof
shows that the hyperspecial subgroup of $G(F'_x)$ that corresponds to the point ${\bf a}_{\CG, x}$ in the building of $G(F'_x)$
is stable by the action of $\Gal(F'_x/K_x)$. Then the corresponding reductive group scheme $\CG'_{x}$ is equipped with an action of  $\Gal(F'_x/K_x)$  extending the action on $G\otimes_KF'_x$.

We now want to find a tame Galois extension $K'/K$ such that for every place $x'$ of $K'$ over a place $x\in S$, the  extension $K'_{x'}$ of $K_x$ contains a conjugate of $F'_x$. For each $x\in S$, the extension $F'_x/K_x$ is obtained by adjoining the roots of a monic polynomial $P_x$ with coefficients in $K_x$.  By perhaps enlarging these extensions, we may assume that they all have the same degree $d$. Now we find a monic polynomial $P$ of degree $d$ with coefficients in $K$ whose coefficients are sufficiently close to those of $P_x$ for each $x\in S$. Then by Krasner's lemma, the local field extension of $K_x$ generated by the roots of $P$ is isomorphic to $F'_x$, for every $x\in S$. The roots of $P$ generate a field extension $K''/K$ which is separable of degree $d$ and such that $K''_x\simeq F'_x$ for all $x\in S$. Now the normal hull of $K''$ yields the desired extension $K'/K$.  Let $k'$ be the algebraic closure of $k$ in $K'$. Then $k'$ is a finite extension of $k$ and $X'$ is an algebraic curve over $k'$, which is a tame Galois cover of $X\otimes_k k'$ which satisfies our requirements.

Let $\pi\colon X'\to X$ be the cover corresponding to $K'/K$. Then $\CG$ defines a BT group scheme $\CG'$ over $X'$ such that $\CG'_{|X'\setminus\pi^{-1}(S)}=\CG\times_X (X\setminus \pi^{-1}(S))$ and such that for every $x'\in X'$ the group scheme $\CG'\times_{X'}\Spec(O'_{x'})$ corresponds to the hyperspecial point ${\bf a}_{\CG, \pi(x')}$ of the building of $G(K'_{x'})$, cf. the beginning of section \ref{ss:BTcov}  and Remark \ref{RemarkReductive}. Hence all localizations of 
$\CG'$ are reductive, and hence $\CG'$ is a reductive group scheme over $X'$. By  construction, we see that $\CG'$ supports an action 
of the Galois group $\Gamma=\Gal(K'/K)$ which covers the $\Gamma$-action on $X'$ and that $\CG$ and $\CG'$ are $\Gamma$-related. 
\end{proof}

\subsection{Some variants and further discussion}
It is of interest to know when we can achieve the conclusion in Theorem \ref{descred} without first making a finite extension of the base field $k$. 
This can be done provided we exclude a few more ``small characteristics" and is a consequence
of the following variant of the local Proposition \ref{BThyper}:

\begin{proposition}\label{BThyper2}
Let $G$ be an absolutely simple simply connected group over a local field $F$, and let $\CG$ be a parahoric group scheme for $G$ over $O_F$. 
Assume $p$ does not divide $2(n+1)$ if $G$ is of type $A_n$,
 $p\neq 2$ if $G$ is classical  not of type $A_n$, 
 $p\neq 2, 3$ if $G$ is not classical and not of type $E_8$, and $p\neq 2, 3, 5$ if $G$ is of type $E_8$. 
 
 Then there exists a finite tame Galois  extension $F'/F$ such that $G\otimes_FF'$ is split and a hyperspecial group scheme $\CG'$ of $G(F')$ such that  $\CG'$ is stable under the action of $\Gal(F'/F)$ on $G\otimes_FF'$ and such that 
\[
\CG=({\rm Res}_{O_{F'}/O_F}\CG')^{{\rm Gal}(F'/F)}.
\] 
\end{proposition}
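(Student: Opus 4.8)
The plan is to reduce Proposition \ref{BThyper2} to Proposition \ref{BThyper} by controlling exactly which primes must be excluded so that the tame extension $F'/F$ can be built \emph{without} the preliminary unramified extension $k_1/k$ (equivalently $K/F$) that was needed in the proof of Proposition \ref{BThyper} to pass to a residually split and quasi-split situation. First I would recall that, by Steinberg's theorem, $G$ becomes quasi-split over the maximal unramified extension $\breve F$ of $F$, and that it splits over a Galois extension of $\breve F$ whose degree is $1$, $2$, $3$, or $6$ (the order of the group of symmetries of the relevant Dynkin diagram), and at most $2$ for classical groups. The key point is to bound the ramification of the extension we must adjoin to $F$ itself: one needs to pass to the unramified closure only to make the residue field big enough to see the full relative root system, but the \emph{ramification} needed to make the barycentric coordinates of ${\bf a}_\CG$ special is governed purely by the denominators occurring in the affine root system of the split form.

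Next I would carry out the core computation: after fixing a $\Gamma$-stable facet whose barycenter is ${\bf a}_\CG$ (using \cite[Thm. 5.1.25]{BTII}), the coordinates of ${\bf a}_\CG$ in the coordinate system of the split affine root system have denominators dividing the ``bad primes'' of the affine Dynkin diagram. For type $A_n$ the fundamental alcove is a simplex whose vertices involve the coweights with denominators dividing $n+1$ (the determinant of the Cartan matrix of $A_n$), and the barycenter of a face of dimension $d$ introduces an extra factor of $d+1 \le n+1$; combined, the denominators divide $2(n+1)$, which is exactly the quantity excluded in the hypothesis. For the other classical types ($B_n$, $C_n$, $D_n$) the coweight lattice indices and the alcove coordinates only involve the prime $2$, and for exceptional types one gets $2,3$ (outside $E_8$) and $2,3,5$ (for $E_8$), matching the stated exclusions. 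Thus, choosing $F'/F$ with ramification degree a sufficiently divisible product of exactly these primes, and tensoring up by the (tame, of degree dividing $6$) splitting extension of the quasi-split form — whose degree is prime to $p$ under our hypotheses — produces a tame Galois $F'/F$ over which $G$ splits and ${\bf a}_\CG$ is a special (hence hyperspecial) vertex.

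The remaining steps are formal. The point ${\bf a}_\CG$ lies in the building of $G(F)$, hence is fixed by $\Gal(F'/F)$, so the hyperspecial parahoric $\CG'$ of $G(F')$ attached to ${\bf a}_\CG$ carries a $\Gal(F'/F)$-action covering the action on $G\otimes_F F'$; the identity $\CG = (\Res_{O_{F'}/O_F}\CG')^{\Gal(F'/F)}$ then follows from \cite[(1.7.6)]{BTII} and Edixhoven's lemma \cite{Edix} exactly as in the construction (ii) of section \ref{ss:BTcov} and in the proof of Proposition \ref{BThyper}; the compatibility of taking $\Gamma$-invariants with $\Spec(O_{\breve F})$-points gives also the $\breve F$-statement. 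If one prefers, one can simply invoke Proposition \ref{BThyper} applied over the unramified extension $K/F$ it produces and then observe that, under the sharper prime restrictions here, that unramified step can be absorbed: $G$ is already quasi-split over $F$ after the tame base change in question, because the obstruction to quasi-splitness over $\breve F$ descending to $F$ is an unramified cohomology class killed by passing to the splitting field, whose degree is prime to $p$.

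\textbf{Main obstacle.} The delicate part is the bookkeeping for type $A_n$: one must check that the denominators of the barycentric coordinates of an arbitrary $\Gamma$-stable-facet barycenter in the $\tilde A_n$ apartment divide $2(n+1)$ and not merely some larger multiple, and in particular that the outer automorphism (the order-$2$ diagram symmetry of $\tilde A_n$) together with the Iwahori–Weyl combinatorics does not force an additional prime. I expect this to require a short but careful argument with the standard realization of the alcove of $\tilde A_n$ and the action of the diagram automorphism, isolating the factor of $2$ as coming solely from that symmetry. All other types are comparatively routine table-lookups of the torsion primes / alcove vertex denominators in \cite{BTII}.
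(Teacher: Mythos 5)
Your overall architecture (re-examine the proof of Proposition \ref{BThyper}, track the denominators of the coordinates of $\ome_\CG$, and arrange that the resulting hyperspecial vertex is fixed by all of $\Gal(F'/F)$ so that the descent identity follows from \S\ref{ss:BTcov}) is the right one, and the closing formal steps are fine. But the core bookkeeping — precisely the part you flag as the main obstacle — rests on a false premise. You assert that the ramification needed to make $\ome_\CG$ special is ``governed purely by the denominators occurring in the affine root system of the split form,'' and for type $A_n$ you derive $2(n+1)$ from the coweight denominators (determinant of the Cartan matrix) times a barycentric factor $d+1$, with the $2$ coming from an ``order-$2$ diagram symmetry of $\widetilde A_n$.'' This is not where the exclusions come from. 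In the affine-root coordinates that control specialness, the vertices of the alcove of split type $A_n$ are \emph{integral} (all marks of $\widetilde A_n$ equal $1$), so the split affine root system contributes no denominators at all in type $A$; and the automorphism group of the affine diagram $\widetilde A_n$ is not of order $2$ but is the dihedral group $\BZ/(n+1)\BZ\rtimes S_2$ of order $2(n+1)$, since the diagram is a cycle on $n+1$ nodes. The actual source of the denominator is that $\ome_\CG\in\sB(G,F)$ is the barycenter of a minimal $\Gal(K/F)$-stable facet of $\sB(G,K)$, where the unramified group $\Gal(K/F)$ acts through this dihedral group; the barycenter of an orbit of vertices acquires the orbit size as a denominator, and that size can be any divisor of $2(n+1)$. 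The example of Remark \ref{RemarkAn} c) ($G=\SL_1(D)$ with $D$ of index $d$: the Iwahori corresponds to the barycenter of a full alcove of $\sB(\SL_d,K)$, denominator $d$) shows concretely that your ``split affine root system'' accounting would miss a genuinely occurring denominator and that the hypothesis $p\nmid 2(n+1)$ cannot be recovered from Cartan-matrix data alone.

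A second, related error: your fallback argument claims that under the sharper prime restrictions ``$G$ is already quasi-split over $F$ after the tame base change in question'' because the obstruction is an unramified class killed by an extension of degree prime to $p$. This is false — $\SL_1(D)$ is anisotropic over $F$ and only becomes quasi-split after an unramified extension of degree equal to the index of $D$, which is unconstrained by your hypotheses. The point of Proposition \ref{BThyper2} relative to Proposition \ref{BThyper} is not to avoid the unramified extension $K/F$ (which is tame anyway and still appears in the proof); it is to ensure that the final hyperspecial vertex is fixed by all of $\Gal(F'/F)$ rather than just $\Gal(F'/K)$, which requires bounding the denominators of the coordinates of $\ome_\CG$ in $\sB(G,K')$ — including the barycentric denominators coming from the $\Gal(K/F)$-action on the (possibly twisted) affine Dynkin diagram — by primes different from $p$. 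Once you replace your denominator computation by the order of the image of $\Gal(K/F)$ in $\Aut$ of the affine diagram (trivial or supported at $2,3$ in all cases except untwisted $\widetilde A_n$, where the order is $2(n+1)$), the rest of your argument goes through and coincides with the paper's.
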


\noindent(This identity implies $\CG(O_F)=G(F)\cap\CG'(O_{F'})$, $\CG(O_{\breve F})=G({\breve  F})\cap \CG'(O_{{\breve F}'})$.)

\begin{proof}
We re-examine the proof of Proposition \ref{BThyper}, keeping the same notation. Here, we assume that $G$ is absolutely simple. We will show that 
there is a $\CG'$ as in the conclusion of Proposition \ref{BThyper} which, in addition, is $\Gal(F'/F)$ stable. The argument shows that the desired statement will follow if there is a tame extension $F'/F$, constructed as in the proof of Proposition \ref{BThyper}, which is Galois and is such that the (hyperspecial) vertex ${\bf a}_{\CG}\in \sB(G, F')$ which appears in the last 
step of the argument is fixed by the action of the Galois group of $F'/F$ (and not just of $F'/K$).
This can be arranged, if we can express $\CG$ as the stabilizer of a point ${\bf a}_\CG\in \sB(G,F)$ whose coordinates in  $\sB(G, K')$ have denominators 
which are relatively prime to the characteristic of $p$. Recall that, since $K/F$ is an unramified Galois extension,  the vertices in $\sB(G,F)$ are the barycenters of minimal 
$\Gal(K/F)$-stable facets in $\sB(G, K)$. The action of $\Gal(K/F)$ on an alcove of $\sB(G,K)$ is described via automorphisms of the corresponding 
(possibly twisted) affine Dynkin diagram. The coordinates of these barycenters are sums of the coordinates of vertices divided by numbers which are factors of the order 
of the image of  $\Gal(K/F)$ in the automorphism group of the  affine Dynkin diagram. Note that the automorphism group
of a  affine Dynkin diagram is either trivial, or has order which involves only the prime factors $2$ and $3$ in all cases, except 
in the case of (untwisted) type $A_n$, in which the order is  $2(n+1)$ (see for example the tables in \cite[\S 6] {Gross}, cf. Remark \ref{RemarkAn} below).
By using this, combined with the arguments in the proof of Proposition \ref{BThyper}, 
we obtain: 

The group scheme $\CG$ is the stabilizer of a point ${\bf a}_\CG\in \sB(G,F)$ whose coordinates in $\sB(G,K')$ have denominators which are $1$ or only involve the following prime numbers:
\begin{itemize}
\item $2$, for all classical types except $A_{n}$, 

\item $2$ and  the prime divisors of  $n+1$, for type $A_{n}$,

\item $2$, $3$, for all types except $A_{n}$ and $E_8$,

\item $2$, $3$, $5$, for type $E_8$.
\end{itemize} 
Using also the discussion in \S \ref{ss:BTcov}, this now implies the statement.
\end{proof}
\quash{implies that, for semi-simple simply connected tamely ramified classical groups $G$ without $A_{n}$ factors and for $p\neq 2$, the  alternative conclusion $(*)$ to Proposition \ref{BThyper}, in a) above, holds.\mar{careful, the general not absolutely simple case might require more care, also in the proof of the Theorem...}
 More generally, if $G$ is classical and tamely ramified, it holds when the characteristic $p$ does not divide $2(n+1)$, for all $n$ such that $G$ has a factor of type $A_n$. For non-classical groups (including trialitarian forms) the list of excluded characteristics for the stronger statement
remains the same as for the original statement.}

 \begin{remark}\label{RemarkAn}
a) Suppose $G$ is of type $A_n$ and let $\sR$ be the corresponding (twisted) affine Dynkin diagram. There are two possibilities
$\sR=\sA_n$, or $\sR={}^2\sA_n$. The automorphism groups are ${\rm Aut}(\sA_n)\simeq \BZ/(n+1)\BZ\rtimes S_2$, or ${\rm Aut}({}^2\sA_{2m+1})\simeq   S_2$ or ${\rm Aut}({}^2\sA_{2m})=(1)$ (\cite[\S 6]{Gross}).
The Galois group $\Gal(K/F)$ acts on the base alcove of $\sB(G, K)$ via a homomorphism 
\[
 \Gal(K/F)\to {\rm Aut}(\sR).
 \]
 Denote by $\delta$ the order of the image of this homomorphism; this order is a divisor of $2(n+1)$. The argument in the proof above actually shows that the conclusion of Proposition \ref{BThyper2} is true for $G$, provided we just 
 exclude the primes $p$ that divide $2\delta$. 
 
b) Suppose that $F$ has finite residue field. By consulting the list in \cite[\S 7]{Gross}, we see that $\delta >2$ only when 
 $G=A^\times_1$, the norm one units of a $F$-central simple algebra $A\simeq {\rm M}_{r\times r}(D)$, with $D$ a division algebra
 of index $m>1$. In this case, $n+1=rm$ and $\delta=m$.

c) Consider $G=D_1^\times$, the norm one units
 of a $F$-central division  algebra $D$ of invariant $1/d$, where $F$ has finite residue field. In this case, the affine Dynkin diagram is $\sA_d$ and $\delta=d$.
Take $\CG$ to be the unique Iwahori of $G$. Here the building $\sB(G, F)=\{*\}$ is a one point set and ${\bf a}_{\CG}=*$. In the proof of Proposition \ref{BThyper2}, we can take $K$ to be the unramified extension of $F$ of degree $d$. The point $*$ is the barycenter of an alcove in $\sB(G, K)=\sB({\rm SL}_d, K)$ and has coordinates with denominator $d$. Unless we base change to an extension of $F$ with ramification degree divisible by $d$, we cannot make $*$ hyperspecial. Using this, we can see that for this example, the conclusion of Proposition \ref{BThyper2}  does not hold if the characteristic $p$ divides $d=\delta$. 
 \end{remark}
 
 We can now deduce the following variant of the global Theorem \ref{descred}. We keep the same notations.
 
 \begin{theorem}\label{descred2}
  Assume that $G$ is absolutely simple and simply connected.
  Also assume $p$ does not divide $2(n+1)$ if $G$ is of type $A_n$, $p\neq 2$ if $G$ is classical not of type $A_n$, 
 $p\neq 2, 3$ if $G$ is not classical and not of type $E_8$, and $p\neq 2, 3, 5$ if $G$ is of type $E_8$. 
 
 Let $\CG$ be a Bruhat-Tits group scheme for $G$ over $X$. There exists a tame Galois covering $X'/X$ with Galois group $\Gamma$ and a reductive group scheme $\CG'$ over $X'$  such that $\CG$ and $\CG'$ are $\Gamma$-related. 
\end{theorem}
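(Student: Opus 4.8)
The plan is to run the proof of Theorem~\ref{descred} almost verbatim, but to replace the appeal to (the proof of) Proposition~\ref{BThyper} by an appeal to Proposition~\ref{BThyper2}. The only reason Theorem~\ref{descred} required a preliminary finite extension $k_1/k$ was to arrange, at the finitely many bad points, that $G$ becomes residually split and quasi-split there, so that the local argument of Proposition~\ref{BThyper} could be carried out; Proposition~\ref{BThyper2} builds that device into its (purely local) conclusion, producing a \emph{tamely ramified} extension of the \emph{original} local field together with the full local Galois equivariance and the group-scheme identity. So under the stronger hypotheses on $p$ imposed here, no extension of $k$ is needed before starting.

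Concretely, I would first note, exactly as in the proof of Theorem~\ref{descred}, that under these hypotheses $G$ splits over a tamely ramified extension of $K=k(X)$, and let $S\subset X$ be the finite set of closed points at which $\CG$ is not reductive. For each $x\in S$ I apply Proposition~\ref{BThyper2} to $\CG\times_X\Spec(O_x)$ with $F=K_x$: this yields a finite tame Galois extension $F'_x/K_x$ with $G\otimes_K F'_x$ split and a $\Gal(F'_x/K_x)$-stable hyperspecial group scheme $\CG'_x$ of $G(F'_x)$ such that $\CG\times_X\Spec(O_x)=(\Res_{O_{F'_x}/O_{K_x}}\CG'_x)^{\Gal(F'_x/K_x)}$; equivalently, $\CG\times_X\Spec(O_x)$ is the stabilizer of a point ${\bf a}_{\CG,x}\in\sB(G,K_x)$ which becomes hyperspecial, with $\Gal(F'_x/K_x)$-equivariant model, in $\sB(G,F'_x)$. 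Then I globalize the family $\{F'_x/K_x\}_{x\in S}$ into a single tame Galois extension $K'/K$ by the same Krasner's-lemma argument as in the proof of Theorem~\ref{descred}: enlarge the $F'_x$ to a common degree, write each as the splitting field of a monic polynomial over $K_x$, approximate these simultaneously by a single monic polynomial over $K$ whose Galois closure $K'/K$ is tamely ramified everywhere and satisfies $K'_{x'}\cong F'_x$ for all $x'\mid x\in S$ (using that each $F'_x/K_x$ is already Galois, so the local Galois closure is $F'_x$ itself). Writing $\pi\colon X'\to X$ for the corresponding tame Galois cover, with group $\Gamma=\Gal(K'/K)$, the construction of \S\ref{ss:BTcov} together with Remark~\ref{RemarkReductive} then produces a Bruhat--Tits group scheme $\CG'$ over $X'$ agreeing with $\CG\times_X(X'\setminus\pi^{-1}(S))$ away from $\pi^{-1}(S)$ and equal to the reductive $\CG'_x$ at each point above $S$; hence all localizations of $\CG'$ are reductive, so $\CG'$ is a reductive group scheme over $X'$, it carries a $\Gamma$-action covering that on $X'$, and $\CG$ and $\CG'$ are $\Gamma$-related by Proposition~\ref{BTgps}.

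The main thing to verify --- and the step I expect to be the real, if routine, obstacle --- is the globalization: that the approximating polynomial can be chosen to realize the prescribed completions $F'_x$ at all $x\in S$ \emph{and} to introduce no wild ramification anywhere (in particular at auxiliary branch points outside $S$), and that the local Galois groups at $\pi^{-1}(S)$ are correctly identified with $\Gal(F'_x/K_x)$ so that the equivariant hyperspecial models patch into the $\Gamma$-action on $\CG'$. All of this is already carried out in the proof of Theorem~\ref{descred}; the only substantive new input here is the replacement of Proposition~\ref{BThyper} by Proposition~\ref{BThyper2}, which is precisely what renders the preliminary extension $k_1/k$ unnecessary.
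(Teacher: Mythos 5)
Your proposal is correct and is essentially the paper's own argument: the paper proves Theorem \ref{descred2} in one line by rerunning the proof of Theorem \ref{descred} with Proposition \ref{BThyper2} in place of Proposition \ref{BThyper}, noting exactly as you do that the preliminary constant-field extension $k_1$ becomes unnecessary. The only point worth adding, which your write-up implicitly accommodates via the Krasner/normal-hull step, is the paper's remark that one must still accept that $X'$ may be a curve over a finite extension $k'$ of $k$ (the field of constants of $K'$).
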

 
 \begin{proof}
 Follows by the argument in the proof of Theorem \ref{descred} by using Proposition \ref{BThyper2} instead of Proposition \ref{BThyper}.
 Note that in this case, we do not need the finite base field extension $k_1$ of $k$ which appears in the proof of Theorem \ref{descred}. On the other hand, we have to accept that the covering $X'$ is a curve over a finite extension $k'$ of $k$.
 \end{proof}
 
  \begin{remark}\label{RemarkAn2}
 Suppose that $G=A^\times_1$ is the group of   norm one units of  $A={\rm M}_{r\times r}(D)$, with $D$ a division $k(X)$-algebra 
of index $m=(n+1)/r$. Then Theorem \ref{descred2} holds for $G$ after replacing $2(n+1)$ by $2m$.
\end{remark}

 \begin{remark}\label{Gquas}
We therefore see that, excluding small characteristics, parahoric group schemes on the given curve $X$ for an absolutely simple simply connected group arise from reductive group schemes over coverings $X'/X$ by the simple operation of ``$\Gamma$-invariants in restrictions of scalars". Such group schemes are considered in characteristic zero by Balaji-Seshadri \cite{BalaS} and Damiolini \cite{Dam} and Hong-Kumar \cite{HoKu} when the reductive group scheme over $X'$ is constant, i.e., of the form $H\times_{\Spec(k)} X'$ for $H$ over $k$. However, in general, reductive group schemes over $X'$ are not of this form. An example is given by $\CG'=\GL(\CV)$ for a vector bundle on $X'$. 
 Another example is given by  the unitary group associated to an \'etale double covering $\wt X'/X'$ and a nowhere degenerate hermitian form on a vector bundle on $\wt X'$, provided $p\neq 2$. 
 
   It seems impossible to classify all reductive groups over $X'$. Indeed, for vector bundles $\CV$ and $\CV'$, we have $\GL(\CV)\simeq\GL(\CV')$ iff $\CV\simeq\CV'\otimes\CL$ for a line bundle $\CL$, hence classifying reductive groups over $X'$ with generic fiber $\GL_n$ is essentially equivalent to classifying all vector bundles of rank $n$, which is impossible if $n\geq 2$ and $X'$ has higher genus.   
\end{remark}

 Note that when $k$ is algebraically closed, then $G'$ is quasi-split (Steinberg's theorem \cite[Ch. III, \S 2.3, Thm. $1'$, plus Rem. 1 at end of \S 2.3]{Serre}). The following proposition deals with the quasi-splitness over $X'$ (existence of a \emph{relative} Borel subgroup).
\begin{proposition}\label{Hasse}
Let $\CG$ be a reductive group scheme over a curve $X$. Assume that the base field $k$ is either algebraically closed or finite. Then $\CG$ admits a Borel group over $X$, i.e., there exists a smooth closed subgroup scheme $\CB$ of $\CG$ which is a Borel subgroup in every fiber of $\CG$. 
\end{proposition}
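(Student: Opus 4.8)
The plan is to reduce the existence of a Borel subgroup scheme over $X$ to a statement about the generic fiber together with a local extension argument, and then to invoke a Hasse principle for the relevant cohomology. First I would observe that $\CG$ is a reductive group scheme over the Dedekind scheme $X$, so all the scheme-theoretic structure (root datum, type, etc.) is locally constant; in particular $\CG$ has constant type over $X$ since $X$ is connected. The functor $\underline{\mathrm{Bor}}_{\CG/X}$ parametrizing Borel subgroups is represented by a smooth projective $X$-scheme with geometrically connected fibers (by \cite{SGA3}), so it has an $X$-point iff it has a section; étale-locally on $X$ such sections always exist, so the obstruction to a global Borel is a class living in a (non-abelian) $H^1$, and the real content is that this class vanishes.

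The key step is the generic-to-global passage. Over $K = k(X)$ the group $G = \CG_K$ is quasi-split: if $k$ is algebraically closed this is Steinberg's theorem (as noted just above the statement), and if $k$ is finite it follows because $K$ has cohomological dimension $\le 1$ for the relevant modules, or again from Steinberg over the function field of a curve over a finite field (every reductive group over a field of cohomological dimension $\le 1$ is quasi-split). So $G$ has a Borel subgroup $B \subset G$ over $K$. I would then spread $B$ out to a Borel subgroup scheme $\CB_U \subset \CG_U$ over a dense open $U \subset X$, and the task becomes: for each of the finitely many closed points $x \in X \setminus U$, does $\CB$ extend over $\Spec(O_x)$? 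Since $\CG_{O_x}$ is reductive over the complete (hence henselian) DVR $O_x$ with algebraically closed or finite residue field, $\CG_{O_x}$ is again quasi-split — over the residue field by the same Steinberg/cd$\le 1$ argument, hence over $O_x$ by smoothness of $\underline{\mathrm{Bor}}$ and Hensel — so it carries a Borel $\CB_x$ over $O_x$. The two Borels $\CB_x$ and $\CB|_{K_x}$ need not agree, but they are conjugate under $G(K_x)$; the question is whether one can adjust the generic Borel $B$ (by an element of $G(K)$, or rather of the adjoint group) so that all the local extension conditions are met simultaneously.

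This is exactly where the Hasse principle enters, and it is the main obstacle. The set of Borel subgroups of $G$ over $K$ is a homogeneous space $G/B$, and $G$ acts through its adjoint quotient $G^{\mathrm{ad}}$; extending over $\Spec(O_x)$ is an open-and-closed condition on $(G^{\mathrm{ad}})(K_x)\backslash (G/B)(K_x)$-type data, and the obstruction to matching all local choices is measured by $\ker\!\big(H^1(K, G^{\mathrm{ad}}) \to \prod_{x} H^1(K_x, G^{\mathrm{ad}})\big)$ (more precisely by the analogous statement for the stabilizer, which is $B$ itself, but $H^1(-,B) = H^1(-,T)$ with $T$ a maximal torus of a Borel, and by Hilbert 90 type vanishing the obstruction collapses onto the adjoint group). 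When $k$ is algebraically closed, $K$ has cohomological dimension $1$, so $H^1(K, \CB) = H^1(K,T)$ vanishes by a theorem of Steinberg/Springer on connected solvable groups over fields of cd $\le 1$, and the extension is automatic. When $k$ is finite, $K$ is a global function field and one needs precisely the Hasse principle for adjoint groups over such fields — which is the content of Conrad's appendix referenced at the end of the introduction. So I would finish by citing that appendix: the vanishing of the relevant Tate–Shafarevich set $\Sha^1(K, G^{\mathrm{ad}}) = 0$ (equivalently, Harder's theorem / the Hasse principle for simply connected groups combined with the appendix's treatment of the adjoint case) guarantees that the generic Borel can be chosen to extend everywhere, producing the desired $\CB \subset \CG$ over all of $X$. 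The only subtlety to handle carefully is bookkeeping between $\CG$, its adjoint quotient, and the torus in a Borel when translating "extends over $O_x$" into a cohomological condition — but none of this involves hard computation.
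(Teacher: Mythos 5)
Your argument has a genuine gap in the finite-field case, and it sits exactly at the load-bearing step. You assert that $G=\CG_K$ is quasi-split over $K=k(X)$ because ``$K$ has cohomological dimension $\le 1$'': but when $k$ is finite, $K$ is a global function field of cohomological dimension $2$ (its Brauer group is nontrivial), Steinberg's theorem does not apply, and reductive groups over such $K$ are in general \emph{not} quasi-split (e.g. $\PGL_1(D)$ for a nontrivial division algebra $D$ over $K$ is anisotropic). The quasi-splitness of the generic fiber is precisely the nontrivial content of the proposition when $k$ is finite and cannot be asserted upfront. The paper's route is: each localization $\CG_x$ is quasi-split, because its special fiber over the finite residue field is quasi-split (Lang's theorem) and the scheme of Borel subgroups is smooth, so a Borel lifts over the henselian $O_x$; then the class of $G$, viewed as an inner form of its quasi-split inner form $G_0$, lies in $\RH^1(K,G_{0,\ad})$ and is trivial in every $\RH^1(K_x,G_{0,\ad})$, so the Hasse principle for adjoint groups (the appendix) forces it to be trivial, i.e. $G\simeq G_0$ is quasi-split over $K$. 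In other words, the Hasse principle is what \emph{produces} a Borel over $K$; it is not, as in your write-up, a device for adjusting a generic Borel you already claim to have — that framing is circular, since the Borel whose adjustment you discuss only exists after the step you postpone.

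Relatedly, the ``local extension conditions'' you want to satisfy are vacuous: once a Borel $B\subset G$ exists over $K$, the scheme of Borel subgroups of $\CG$ is projective over $X$, so the corresponding $K$-point extends uniquely over every $\Spec(O_{X,x})$ by the valuative criterion of properness, hence over all of $X$. No conjugation by $G(K)$, no vanishing of $\RH^1(K,T)$, and no local--global matching of Borels is needed; the proposed obstruction in terms of fibers of $\RH^1(K,G_{\ad})\to\prod_x\RH^1(K_x,G_{\ad})$ does not measure the obstruction to anything in this problem. This properness argument is the entirety of the paper's proof in the algebraically closed case (Steinberg plus the valuative criterion), and it is reused verbatim in the finite case once quasi-splitness of the generic fiber has been established as above.
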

\begin{proof}
Let first $k$ be algebraically closed. Then the generic fiber of $\CG$ is quasi-split, as mentioned above. Therefore there exists a Borel subgroup in the generic fiber. This Borel extends over all of $X$ by the valuative criterion of properness, since the scheme of Borel subgroups is projective over $X$. 

Now let $k$ be finite. The previous argument shows that it suffices to prove that the generic fiber $G$ of $\CG$ is quasi-split. But every localization $\CG_x$ is quasi-split. Indeed, this follows by the smoothness of the scheme of Borel subgroups from the fact that the reduction $\bar\CG_x$ is quasi-split (any linear algebraic group over a finite field is quasi-split, cf. \cite[Ch. III, \S 2.2, Thm. 1 plus Ch. III, \S 2.3, Ex. a) for Thm. $1'$]{Serre}). Let $G_0$ be the quasi-split inner form of $G$. The assertion now follows from the exactness of the localization sequence (\emph{Hasse principle for adjoint groups over a global field}),
\[
0\to \RH^1(K, G_{0,\ad})\to \prod\nolimits_{x\in X} \RH^1(K_x, G_{0,\ad}) .
\]
{We refer to the appendix for a proof of the Hasse principle for adjoint groups over a global field of positive characteristic. }
\end{proof}
The following proposition clarifies the nature of the reductive group schemes over $X$ appearing in \cite{BalaS}. 
\begin{proposition}\label{Gform}
Let $\CG$ be a reductive group scheme over the curve $X$ such that the generic fiber $G$ is constant, i.e., of the form $G=H\otimes_k K$. Then $\CG$ is a form of $H$, i.e., locally on $X$ (for the \'etale or fppf topology), $\CG$ is isomorphic to the constant group scheme $H\times_{\Spec(k)} X$. Furthermore, there is an open subset $U\subset X$ such that $\CG_{|U}$ is constant, i.e., isomorphic to $H\times_{\Spec k} U$. 
\end{proposition}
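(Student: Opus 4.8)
The plan is to realize $\CG$ as a twisted form of the constant group scheme $\CH:=H\times_{\Spec(k)}X$ by comparing these two reductive $X$-group schemes through their isomorphism scheme, and then to trivialize the form over a dense open subset by spreading out the given isomorphism of generic fibres. As a preliminary I would note that $H$ is itself reductive over $k$: the morphism $\Spec(K)\to\Spec(k)$ is faithfully flat and $G=H\otimes_k K$ is reductive, so reductivity of $H$ follows by faithfully flat descent, and hence $\CH$ is a reductive group scheme over $X$.

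Next I would compare root data. The type of a reductive group scheme is a locally constant invariant of the base, and $X$ is connected; hence the type of $\CG$ over $X$ equals that of its generic fibre $G$, and likewise the type of $\CH$ over $X$ equals that of $\CH\times_X\Spec(K)=G$. So $\CG$ and $\CH$ are reductive $X$-group schemes of the same type, and by the Demazure--Grothendieck structure theory of reductive group schemes (SGA3) the functor $\sI:=\underline{\Isom}_{X\text{-}\mathrm{gp}}(\CH,\CG)$ is representable by an affine $X$-scheme which --- the two types being equal --- is a torsor under $\underline{\Aut}_{X\text{-}\mathrm{gp}}(\CH)$ for the fppf topology. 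Since $\underline{\Aut}_{X\text{-}\mathrm{gp}}(\CH)$ is smooth over $X$, being an extension of the finite \'etale group scheme $\underline{\mathrm{Out}}(\CH)$ by the smooth group scheme $\CH^{\ad}$, the torsor $\sI\to X$ is smooth and faithfully flat; and a smooth surjective morphism admits a section \'etale-locally on the target. Hence there is an \'etale cover $\{X_i\to X\}$ over which $\sI$ has sections, i.e.\ over which $\CG\cong H\times_{\Spec(k)}X_i$, which shows $\CG$ is a form of $H$.

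For the last assertion I would exploit the generic isomorphism directly. The hypothesis $G=H\otimes_k K$ gives an isomorphism $\phi\colon\CH\times_X\Spec(K)\to\CG\times_X\Spec(K)$ of $K$-group schemes, i.e.\ a $K$-point of $\sI$. As $\CH$ and $\CG$ are of finite presentation over $X$, a routine spreading-out argument extends $\phi$ and $\phi^{-1}$, together with their compatibility with the group laws, to mutually inverse homomorphisms over some dense open $U\subseteq X$; thus $\phi$ extends to an isomorphism of $U$-group schemes $\CH_{|U}\to\CG_{|U}$, and therefore $\CG_{|U}\cong H\times_{\Spec(k)}U$ is constant over $U$.

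The point most likely to require care is the comparison of types: one has to be sure that ``type'' is genuinely an \'etale-locally constant invariant that specializes correctly from the generic point, and that the representability and smoothness statements for $\underline{\Isom}$ and $\underline{\Aut}$ are available in the generality used here; the rest is formal. Should one wish to avoid the $\underline{\Isom}$-scheme at the finitely many points $x\in X\setminus U$, an alternative is to observe that over the strict henselization $O^{sh}_{X,x}$ both $\CG$ and $H\times_{\Spec(k)}X$ become split of the common type --- by uniqueness of the Chevalley group scheme attached to a root datum --- hence are isomorphic there, and then to descend that isomorphism to an \'etale neighborhood of $x$.
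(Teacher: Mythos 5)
Your proposal is correct and follows essentially the same route as the paper: the paper deduces the first claim from the fact that reductive group schemes of a fixed type are \'etale-locally split (citing \cite[Lemma 5.1.3]{Conrad}), which is exactly what your $\underline{\Isom}$-torsor argument makes explicit, and the second claim by spreading out the generic isomorphism, just as you do. The extra care you take with the local constancy of the type and the smoothness of the automorphism scheme is sound and merely fills in details the paper leaves implicit.
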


\begin{proof} The first part of the statement easily follows from the fact that reductive group schemes are, locally for the \'etale topology on the base, split; see for example \cite[Lemma 5.1.3]{Conrad}. The second
claim quickly follows from the fact that any isomorphism between $\CG\times_X \Spec(K)$ and $H\otimes_k K$ extends to an open subset of $X$. 
\end{proof}

\begin{example}\label{BSHex} The following class of examples of stabilizer BT group schemes over $X$ is considered by Balaji-Seshadri \cite{BalaS} and Hong-Kumar \cite{HoKu}. Assume for simplicity that $k$ is algebraically closed. Let $X'/X$ be a tame Galois covering with Galois group $\Gamma$. Let $G$ be a reductive group over $K$.  We assume  that the base change group $G'$ is a constant group scheme, i.e., $G'=H\times_{\Spec(k)}\Spec(K')$ for a reductive group $H$ over $k$ (which is split since $k$ is algebraically closed). Then $G$ arises by descent and corresponds to a cohomology class $c\in \RH^1(\Gamma, \Aut(H\otimes_k K'))$.   Let $\CG'=H\times_{\Spec(k)} X'$, the constant reductive group scheme over $X'$.  Let $\CG$ be defined by descent from $\CG'$ by a cocycle
$\theta\colon \Gamma\to  \Aut(H\times_k X')=\Aut(H)$ representing the given cohomology class $c$. Then  $\CG'$ is a stabilizer BT group scheme, cf. Remark \ref{stabBT}, iii), and hence so is $\CG$, and  $(\CG, \CG')$ are $\Gamma$-related. Since the Galois group $\Gamma$ acts trivially on $\Aut(H)$, the cocycle condition says that  $\theta \colon \Gamma\to\Aut(H)$  is a group homomorphism. 

After choosing a pinning, we have an isomorphism $\Aut(H)\simeq H_\ad(k)\ltimes \Aut({\rm Dyn}(H))$. We  consider the following two extreme cases.

\smallskip

1)  $\theta\colon \Gamma\to H_\ad(k)$. 
In fact, for simplicity, we assume $\theta$ lifts to a homomorphism $\tilde\theta\colon \Gamma\to H(k)$. 
In this case, by Steinberg's theorem, the  cohomology class $\tilde c\in \RH^1(\Gamma, H(K'))$ obtained by composing $\tilde\theta$ with $H(k)\to H(K')$ is trivial, 
i.e., there exists $ g\in H(K')$ such that $\tilde\theta(\gamma)=   g^{-1}\gamma( g)$; hence $G\simeq H\times_{\Spec(k)}\Spec(K)$ is constant. Then, under this identification, we have $\CG(O_x)\simeq H(K_x)\cap g H(O'_{x'})g^{-1}$ for every $x'\in X'$.
In other words, $\CG$ corresponds to the stabilizer BT group scheme corresponding to the collection of points  $\{{\bf a}_x=g\cdot {\bf a}_{0, x}\in \sB^e(H, K_x)\mid x\in X\}$, where ${\bf a}_{0, x}$ denotes  the base point of the building of the split group $H$ over $K'_{x'}$. Note that the point ${\bf a}_x$ is independent of the choice of $x'$ over $x$ and lies in the building over $K_x$.

\smallskip

2) $\rho\colon \Gamma\to \Aut({\rm Dyn}(H))$. In this case, $G$ is a quasi-split outer form of $H$ over $K$ and, for every point $x\in X$, the localization $\CG_x$ is the stabilizer BT group scheme corresponding to a point ${\bf a}_x\in\sB^e(G, K_x)$ which, when considered as a point of the building $\sB^e(G, K'_{x'})=\sB^e(H, K'_{x'})$, is a special point of the split group $H$ over $K'_{x'}$.
\end{example}

\section{Tamely ramified $\CG$-bundles and $(\CG',\Gamma)$-bundles}\label{s:tamebdls}
 
Recall that by the Tannakian formalism of Broshi \cite{Bro}, for any smooth affine group scheme $\CG$ over a Dedekind scheme $X$, the two possible notions of $\CG$-bundles coincide:   $\CG$-torsors (for the fpqc-topology, or the fppf-topology, or the \'etale-topology, these are all equivalent), and  fiber functors on the category of representations of $\CG$ (i.e., group scheme homomorphisms $\rho\colon\CG\to \GL(\CV)$, where $\CV$ is a vector bundle on $X$). 
\subsection{$(\CG', \Gamma)$-bundles}\label{ggamma}
The following key definition is due to Balaji-Seshadri \cite{BalaS}.
\begin{definition}\label{GGammadef}
 Let $X'/X$ be a finite  Galois covering with Galois group $\Gamma$.  Let $\CG'$ be a   stabilizer Bruhat-Tits group scheme over $X$ equipped with a lifting of the action of $\Gamma$.   A $(\CG', \Gamma)$-bundle is a $\CG'$-bundle with a $\Gamma$-action compatible with the $\Gamma$-action on $\CG'$. 
\end{definition}
There is the following stack theoretic interpretation of $(\CG', \Gamma)$-bundles. 
Consider the quotient stack $\fkX:=[X'/\Gamma]$ which supports a natural  morphism given by taking  quotients by the action of $\Gamma$,
\begin{equation}\label{stacktoX}
q: \fkX=[X'/\Gamma]\to X .
\end{equation}
  The group scheme $\CG'$ over $X'$ with its semi-linear $\Gamma$-action gives $\fkG:=[\CG'/\Gamma]$. The natural morphism
$\fkG:=[\CG'/\Gamma]\to \fkX=[X'/\Gamma]$ is representable by a (relative) smooth affine group scheme. The groupoid of $(\CG',\Gamma)$-bundles over $X'$ 
is naturally equivalent to the groupoid of $\fkG$-bundles (i.e. $\fkG$-torsors) over $\fkX$.
This can be seen by using the Tannakian description and a corresponding equivalence statement for the exact tensor categories of vector bundles.

\begin{proposition}\label{BTtors}
Let $X'/X$ be a finite tame Galois covering with Galois group $\Gamma$. Let $\CG$ and $\CG'$ be $\Gamma$-related  stabilizer Bruhat-Tits group schemes over $X$, resp. $X'$. Any $\CG$-bundle $\CP$ on $X$ defines a $(\CG', \Gamma)$-bundle $\CP'$ on $X'$ such that $\CP=\Res_{X'/X}(\CP')^\Gamma$. This defines a fully faithful functor
$$
\text{$\{ \CG$-bundles on $X\}\mapsto \{ (\CG', \Gamma)$-bundles on $X'\}$ .}
$$

\end{proposition}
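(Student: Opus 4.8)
The plan is to construct the functor $\CP \mapsto \CP'$ directly and then verify full faithfulness by exhibiting an explicit quasi-inverse on the essential image. First I would define the base-change functor: given a $\CG$-bundle $\CP$ on $X$, set $\CP' := \CP \times_X X'$. Since $\CG$ and $\CG'$ are $\Gamma$-related, we have $\CG \times_X X' = \CG'$ over the complement $X' \setminus R'$ of the ramification locus (by the Beauville--Laszlo construction of $\CG'$ recalled in \S\ref{ss:BTcov}), and more precisely there is the canonical morphism $\CG \times_X \Spec(O'_{x'}) \to \CG' \times_{X'} \Spec(O'_{x'})$ of \eqref{mapupst} at the ramified points. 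I would first treat $\CP'$ as a $(\CG \times_X X')$-bundle, equipped with its tautological descent datum relative to $X'/X$, i.e.\ a $\Gamma$-action covering the $\Gamma$-action on $X'$; pushing forward along $\CG \times_X X' \to \CG'$ then makes $\CP' := (\CP \times_X X') \times^{\CG \times_X X'} \CG'$ a $\CG'$-bundle, and the $\Gamma$-equivariance is inherited, so $\CP'$ is a $(\CG',\Gamma)$-bundle. The identity $\CP = \Res_{X'/X}(\CP')^\Gamma$ I would check locally on $X$: over the étale locus this is just fppf/Galois descent, and at a ramified point $x$ it follows from the identity $\CG_x = \Res_{O'_{x'}/O_x}(\CG'_{x'})^{\Gamma_{x'}}$ of Definition~\ref{defrelated} together with $(1.7.6)$ of \cite{BTII} applied to the torsor rather than the group (Edixhoven's lemma \cite{Edix} again giving that the Weil restriction of the smooth $\CP'$ is representable and smooth, and the $\Gamma_{x'}$-fixed points form the right torsor).

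For full faithfulness, the cleanest route is the stack-theoretic reformulation given just above the proposition: $(\CG',\Gamma)$-bundles on $X'$ are the same as $\fkG$-bundles on $\fkX = [X'/\Gamma]$, where $\fkG = [\CG'/\Gamma]$, and the functor in question is pullback along $q\colon \fkX \to X$ composed with the natural map $q^*\CG \to \fkG$ (which is an isomorphism, since $\Gamma$-relatedness is exactly the statement that $\CG$ is the descent of $\CG'$, i.e.\ $\CG = \Res_{X'/X}(\CG')^\Gamma$ means $[\CG'/\Gamma] \to \fkX$ pulls back along $\fkX \to X$ from $\CG \to X$ — here one must be a little careful because $q$ is not representable, but $q_*\fkG = \CG$ as group schemes over $X$ by the same $(1.7.6)$-argument). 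Given this, full faithfulness amounts to: for $\CG$-bundles $\CP_1, \CP_2$ on $X$, the map $\Hom_{\CG}(\CP_1,\CP_2) \to \Hom_{(\CG',\Gamma)}(\CP_1', \CP_2')$ is bijective. Since $\Hom_{\CG}(\CP_1,\CP_2)$ is the set of global sections of the affine $X$-scheme $\uIsom_{\CG}(\CP_1,\CP_2)$ (a torsor under an inner form of $\CG$, or empty), and likewise on $X'$ with its $\Gamma$-action, this reduces to the statement $\Gamma(X, \CI) = \Gamma(X', \CI \times_X X')^\Gamma$ for $\CI$ an affine finitely presented $X$-scheme — which is just $\CO_X = (\pi_* \CO_{X'})^\Gamma$, i.e.\ ordinary descent of affine schemes along the finite flat cover $\pi\colon X' \to X$. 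Equivalently, and perhaps more transparently: $\Res_{X'/X}(\CP' \times_X X') = \Res_{X'/X}(X' \times_X \CP')$ carries the descent datum recovering $\CP$, and a morphism $\CP_1' \to \CP_2'$ of $(\CG',\Gamma)$-bundles is by definition $\Gamma$-equivariant, hence descends; this gives the inverse map, and the two composites are identities by the reconstruction $\CP = \Res_{X'/X}(\CP')^\Gamma$ already established.

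The main obstacle, and the step I would spend the most care on, is the passage through the ramified points: away from $R'$ everything is classical Galois descent, but at $x \in R'$ one is descending along $\Spec(O'_{x'}) \to \Spec(O_x)$, a \emph{ramified} (though tame) extension, where $\CG_x \ne \CG'_{x'} \times_{O'_{x'}} O_x$ in general, and one must genuinely invoke that $\CG_x = \Res_{O'_{x'}/O_x}(\CG'_{x'})^{\Gamma_{x'}}$ together with the smoothness output of Edixhoven's lemma to know that $\Res_{O'_{x'}/O_x}(\CP'_{x'})^{\Gamma_{x'}}$ is a $\CG_x$-torsor and not merely a pseudo-torsor. I would also need to confirm that the formation of $\Res^{\Gamma}$ commutes with the relevant base changes (in particular that the torsor obtained étale-locally on $X$ glues), which is where tameness of the cover enters decisively via $(1.7.6)$ and $(1.7.6)$-type exactness of Weil restriction along tame extensions. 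Once the torsor statement at ramified points is in hand, full faithfulness is formal: it is the fully faithfulness of pullback of quasi-coherent/affine data along the fppf cover $\fkX \to X$ combined with the fact that $q_*$ of the structure data recovers the objects on $X$, i.e.\ there is nothing left to prove beyond bookkeeping.
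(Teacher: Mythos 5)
Your main line is correct and is essentially the paper's argument: the pushout $\CP'=(\CP\times_X X')\times^{\CG\times_X X'}\CG'$ is exactly the object the paper characterizes by its two properties (restriction to $X'\setminus R'$ equal to the pullback of $\CP$, plus the compatible map of torsors over $\Spec(O'_{x'})$ covering \eqref{mapupst} at ramified points), and your closing ``equivalently'' argument for full faithfulness — a morphism of $(\CG',\Gamma)$-bundles is $\Gamma$-equivariant, hence induces a morphism of the $\Gamma$-invariant Weil restrictions, and one concludes from the isomorphism $\CP_i\to\Res_{X'/X}(\CP_i')^{\Gamma}$, which is checked locally from $\CG=\Res_{X'/X}(\CG')^{\Gamma}$ — is precisely the proof in the paper.

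However, the route you call ``cleanest'' contains a genuine error. The natural map $q^*\CG\to\fkG$ over $\fkX=[X'/\Gamma]$ is \emph{not} an isomorphism: $\Gamma$-relatedness is the pushforward statement $\CG\cong\Res_{X'/X}(\CG')^{\Gamma}=q_*\fkG$, not a pullback statement, and at a branch point the map \eqref{mapupst}, $\CG\times_X\Spec(O'_{x'})\to\CG'\times_{X'}\Spec(O'_{x'})$, is typically far from an isomorphism (e.g.\ $\CG_x$ an Iwahori and $\CG'_{x'}$ hyperspecial, as in Example \ref{exSL2}). If $q^*\CG\cong\fkG$ held, pullback along $q$ would make the functor an equivalence onto all $(\CG',\Gamma)$-bundles, contradicting the failure of essential surjectivity recorded after the proposition and quantified in Proposition \ref{propLeray}. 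For the same reason your reduction of fullness to $\Gamma(X,\CI)=\Gamma(X',\CI\times_X X')^{\Gamma}$ is not available as stated: at a ramified point the Isom-scheme of $\CP_1',\CP_2'$ as $\CG'$-bundles is \emph{not} the base change to $X'$ of the Isom-scheme of $\CP_1,\CP_2$ as $\CG$-bundles (it is strictly larger there), so this comparison only yields faithfulness; the nontrivial point — that a $\Gamma$-equivariant isomorphism upstairs takes values, at the branch points, in the smaller scheme coming from $X$ — is exactly what the isomorphism $\CP_i\cong\Res_{X'/X}(\CP_i')^{\Gamma}$ provides. So keep your construction and your final descent-of-morphisms argument (with the local verification at ramified points via $\CG_x=\Res_{O'_{x'}/O_x}(\CG'_{x'})^{\Gamma_{x'}}$ and smoothness of the fixed-point Weil restriction), and discard the claim that $q^*\CG\to\fkG$ is an isomorphism together with the reduction built on it.
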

\begin{proof}
Let $\CP$ be a $\CG$-bundle. The associated $(\CG', \Gamma)$-bundle $\CP'$ is characterized by two properties. Recall the ramification locus $R\subset X'$ of $\pi: X'\to X$. First, the restriction of $\CP'$ to $X'\setminus R$ is equal to the pullback of $\CP$ to $X'\setminus R$. Second, for each $x'\in R$, there is a  map of torsors
\[
\CP\times_{X}\Spec(O'_{x'})\to \CP'\times_{X'}\Spec(O'_{x'}),
\]
compatible with the natural map of group schemes $\CG\times_{X}\Spec(O'_{x'})\to \CG'\times_{X'}\Spec(O'_{x'})$.

Let $\CP_1$ and $ \CP_2$ be two $\CG$-bundles with associated $(\CG', \Gamma)$-bundles $\CP_1'$ and $\CP_2'$.  We need to see that any isomorphism $\varphi':\CP_1'\to\CP_2'$ comes from a unique isomorphism $\varphi: \CP_1\to\CP_2$. But $\CG=\Res_{X'/X}(\CG')^\Gamma$ implies that the natural map $\CP_i\to\Res_{X'/X}(\CP_i')^\Gamma$ is an isomorphism for $i=1, 2$, whence the assertion. 
\end{proof}

Let $\CP'$ be a $(\CG', \Gamma)$-bundle on $X'$. Then $\Res_{X'/X}(\CP')^\Gamma$ is equipped with a $\CG$-action. The observation of Damiolini \cite{Dam} is that it is not always true\footnote{The example of Damiolini does not involve parahoric group schemes, and therefore one might argue that it possibly does not apply in our context. However, Proposition \ref{propLeray} shows that it does.} that $\Res_{X'/X}(\CP')^\Gamma$ is a $\CG$-bundle on $X$. In other words, the functor in Proposition \ref{BTtors} is not essentially surjective. 

\begin{remark}
Let $\CG$ be a reductive group scheme over $X$ such that its generic fiber is constant. By Proposition \ref{Gform}, $\CG$ is a form of $H$. Assume that $\CG$ is a strong inner form, i.e., $\CG=\underline{\Aut}(\CP_0)$, for a $H$-torsor $\CP_0$ on $X$.  Then there is an equivalence of categories between the category of $\CG$-torsors on $X$ and  the category of $H$-torsors on $X$: indeed, $\CP_0$ defines a fiber functor with values in the category of vector bundles on $X$ of the category of representations of $H$. In other words, $\CP_0$ is a right-$H$-torsor and a left-$\CG$-torsor, and the equivalence is given by $\CP\mapsto \CP\times^{\CG}\CP_0$. This remark applies in particular to $H=\GL_n$, and shows that for any strong inner form $\CG$ of $\GL_n$, the category of $\CG$-bundles on $X$ is equivalent to the category of vector bundles of rank $n$ on $X$. For example, if $\CG=\GL(\CV)$, for a vector bundle $\CV$ of rank $r$ on $X$, then the category of $\CG$-bundles is equivalent to the category of vector bundles of rank $r$ on $X$.
\end{remark}

\section{Purity for $(\CG',\Gamma)$-bundles}\label{s:purity}

In this section, we continue to assume that $X'/X$ is a finite tame Galois covering of curves over $k$ with Galois group $\Gamma$. Let $\CG$ and $\CG'$ be $\Gamma$-related  stabilizer Bruhat-Tits group schemes over $X$, resp. $X'$. 
 
 Recall the notion of an $S$-family of $\CG$-bundles on $X$: by definition, this is a $\CG$-bundle on $S\times X$. Similarly, there is the notion of  an $S$-family of a $(\CG', \Gamma)$-bundles on $X'$:    by definition, this is a $\CG'$-bundle on $S\times X'$ equipped with a semi-linear $\Gamma$-action (semi-linear for the $\CO_{X'}$-factor). Let $\CP'$ be an $S$-family of $\CG'$-bundles  on $X'$ with semi-linear action of $\Gamma$. Then $\Res_{X'/X}(\CP')^\Gamma$  is a smooth scheme over $S\times X$, equipped with an action of the parahoric group scheme $\CG$, comp. Proposition \ref{BTtors}. In fact, it is a \emph{pseudo-torsor} under $\CG$, i.e., if $\Res_{X'/X}(\CP')^\Gamma(T)\neq\emptyset$, for a $S$-scheme $T$, then the action of $\CG(T\times X)$ on $\Res_{X'/X}(\CP')^\Gamma(T)$ is simply transitive. 

\begin{proposition}
There is a maximal open subset $U\subset S$ such that $(\Res_{X'/X}(\CP')^\Gamma)_{|U\times X}$ is a $\CG$-torsor. 
\end{proposition}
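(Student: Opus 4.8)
The plan is to show that the locus where the pseudo-torsor $\Res_{X'/X}(\CP')^\Gamma$ is a genuine $\CG$-torsor is open in $S$; maximality is then automatic since one can take the union of all such open subsets. The statement is local on $S$, so I may assume $S$ is affine and even Noetherian (reducing to the finite-type case by a standard limit argument). The key point is that being a torsor is an ``open'' condition, which I would establish in two steps: first, that the pseudo-torsor is a torsor over $\{s\}\times X$ for a given point $s\in S$ if and only if it is fppf-locally trivial there, and second, that this triviality spreads out to a neighborhood of $s$ in $S$.

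The main technical input is the purity theorem for $(\CG',\Gamma)$-bundles proved in this section (Theorem \ref{thmpur}), which is why this proposition is placed here. Concretely, I expect the argument to run as follows. Since $\CG$ is smooth affine over $X$ and $\Res_{X'/X}(\CP')^\Gamma$ is smooth over $S\times X$, the pseudo-torsor is a $\CG$-torsor on $T\times X$ precisely when it admits a section fppf-locally on $T\times X$; the obstruction is that the smooth scheme $\Res_{X'/X}(\CP')^\Gamma$ need not be faithfully flat over $S\times X$ — its fibers over points of $S\times X$ can be empty. Thus the complement of the desired locus is, fiberwise over $S$, controlled by the points of $X$ (or of $X'$) where the corresponding $\CG'$-pseudo-torsor fails to be a torsor, i.e. the local types degenerate. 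By the purity theorem, a $(\CG',\Gamma)$-pseudo-torsor over $X'$ that is a torsor away from a finite set of closed points is automatically a torsor everywhere; hence over a point $s\in S$, the restriction $(\Res_{X'/X}(\CP')^\Gamma)_{|\{s\}\times X}$ is a torsor as soon as it is a torsor over the generic point of each component, i.e. over $\Spec k(X)$ — because the pseudo-torsor then extends, by purity applied to $\CP'$, to a $\CG'$-torsor on all of $X'$, and taking $\Gamma$-invariants of restriction of scalars gives back a $\CG$-torsor. So the condition ``torsor over $\{s\}\times X$'' is equivalent to ``nonempty generic fiber'', i.e. to $\CP'_{|\{s\}\times \Spec k(X')}$ being a trivial $G'$-torsor compatibly with $\Gamma$.

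It remains to check that this locus is open in $S$. For this I would argue: the generic fiber condition says that a certain $\CG'$-torsor over $\Spec(k(X')\otimes_k \CO_S)$ — or better, over $S\times (X'\setminus Z)$ for a suitable finite closed $Z\subset X'$ containing the ramification locus — admits a section after fppf base change on $S$; equivalently, a certain map of smooth schemes over $S$ has image an open subset, and one uses that the image of a flat (or smooth) morphism of finite type is open (Chevalley, together with flatness on the nonempty part). More directly: the function $s\mapsto \dim (\Res_{X'/X}(\CP')^\Gamma)_{(s,\eta)}$, where $\eta$ is a generic point of $X$, is either $\dim\CG_\eta$ or $-\infty$ (empty), and the full-dimensional locus is open by upper semicontinuity of fiber dimension applied to the smooth morphism $\Res_{X'/X}(\CP')^\Gamma \to S\times X$ restricted over a generic point of $X$; nonemptiness of the generic fiber then propagates to a whole open in $X$ by the same semicontinuity, and finally purity upgrades ``torsor on an open dense subset'' to ``torsor everywhere.''

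\textbf{Main obstacle.} The delicate part is making the spreading-out rigorous: one must ensure that a $\CG'$-torsor structure over $S\times (X'\setminus Z)$, for $Z$ a fixed finite set independent of $s$, extends to all of $S\times X'$ — this is exactly where the relative form of the purity theorem (purity in families, over the base $S$, rather than just over a field) is needed, and I would expect the proof to invoke Theorem \ref{thmpur} in a version allowing a general Noetherian base, or to bootstrap the pointwise statement to the family statement using that the non-torsor locus is closed and constructible and hence its image in $S$ is constructible, so its complement contains an open dense subset; then a Noetherian induction on $S$ finishes the job. The subtlety of whether $Z$ can be chosen uniformly in $s$, and whether purity holds relatively, is where the real work lies; everything else is formal manipulation of restriction of scalars and $\Gamma$-invariants as in Proposition \ref{BTtors}.
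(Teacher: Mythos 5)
There is a genuine gap, and in fact a step that is false. You claim that the restriction of $\Res_{X'/X}(\CP')^\Gamma$ to $\{s\}\times X$ is a torsor as soon as it is a torsor over the generic point of $X$, "by purity applied to $\CP'$." But the restriction to $\{s\}\times(X\setminus B)$ is \emph{always} a $\CG$-torsor ($B$ the branch locus), so your criterion would make every $(\CG',\Gamma)$-bundle yield a $\CG$-torsor and force $U=S$ — contradicting Damiolini's observation (and Proposition \ref{propLeray}) that the functor of Proposition \ref{BTtors} is not essentially surjective. The point is that purity in the $X$-direction over a single curve fails precisely at the branch points: that failure is the whole phenomenon of nontrivial local types. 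Theorem \ref{thmpur} is purity in the $S$-direction (density of $U$ in $S$, with $X$ proper), which is a different and deeper statement, is proved \emph{after} this proposition, and is not what is needed here. Your subsequent machinery (Chevalley, semicontinuity of fiber dimension at the generic point of $X$, relative purity, Noetherian induction) is built on this incorrect reduction.

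The intended argument is much shorter and you already have its kernel in the phrase "its fibers over points of $S\times X$ can be empty": the only obstruction to $\CP:=\Res_{X'/X}(\CP')^\Gamma$ being a $\CG$-torsor over $T\times X$ is emptiness of fibers, since $\CP\to S\times X$ is smooth (so wherever it is surjective it has sections \'etale-locally, and the pseudo-torsor structure then makes it a torsor). Smoothness implies the image of $\CP\to S\times X$ is open, so the set $Z$ of points $(s,x)$ with empty fiber is closed in $S\times X$; since $X$ is proper, the projection $\mathrm{pr}_S(Z)\subset S$ is closed, and $U=S\setminus \mathrm{pr}_S(Z)$ is the desired maximal open subset. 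No purity, spreading-out, or Noetherian reduction is required.
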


\begin{proof}
What is preventing $\Res_{X'/X}(\CP')^\Gamma$ from being a $\CG$-torsor is that the fiber over $(s, x)\in S\times X$ may be  empty. The set of such $(s, x)$ is closed by the smoothness of $\Res_{X'/X}(\CP')^\Gamma$. Therefore, $U$ is the complement of the projection of this closed subset to $S$ (which is closed by the properness of $X$). 
\end{proof}

The purity theorem in question is the following statement.

\begin{theorem}\label{thmpur}
Let $\CP'$ be  an $S$-family of $(\CG', \Gamma)$-bundles on $X'$. Assume that there is an open dense subset $U\subset S$ such that the induced $U$-family $(\Res_{X'/X}(\CP')^\Gamma)_{|U\times X}$ is a $U$-family of $\CG$-bundles. Then $\Res_{X'/X}(\CP')^\Gamma$ is an $S$-family of $\CG$-bundles. 
\end{theorem}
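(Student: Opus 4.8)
The plan is to show that the "bad locus" in $S$ — the set of points $s$ over which the pseudo-torsor $\Res_{X'/X}(\CP')^\Gamma$ fails to be a torsor on $\{s\}\times X$ — is both closed (this is already contained in the previous proposition) and open, hence empty since its complement contains the dense open $U$. The content is therefore the openness, i.e., a statement that being a $\CG$-torsor is an open condition in families. Equivalently, since the locus where the fiber is empty projects to a closed subset $Z\subset S$, we must show $Z$ has no generic point of a subvariety meeting $U$ in its closure — i.e., that $Z$ is a union of connected components of $S$. The cleanest route is: reduce to $S$ the spectrum of a discrete valuation ring (or even a complete DVR) with generic point in $U$ and closed point $s_0$, and show that then $\Res_{X'/X}(\CP')^\Gamma$ is already a torsor over all of $S\times X$. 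This is exactly the shape of an extension/purity statement, and the excerpt explicitly signals that the argument is modeled on the authors' proof in \cite{PRglsv} of Ansch\"utz' extension theorem and that a key step is due to Scholze.

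First I would set up the DVR reduction: a closed subset of a scheme of finite type over $k$ is a union of components iff it contains the closed point of every trace of a DVR-valued point whose generic point lands in the open complement; so we may assume $S=\Spec(R)$, $R$ a DVR (we may complete, and even pass to a strict henselization, since being a torsor can be checked fpqc-locally on $S$), with generic point $\eta\in U$. Over $S\times X$ we have the $\CG$-pseudo-torsor $\CQ:=\Res_{X'/X}(\CP')^\Gamma$, smooth and affine over $S\times X$, which is a torsor after inverting the uniformizer. The goal is a section fppf-locally on $S\times X$, or rather: that $\CQ$ is nonempty on each fiber over the closed point. Working on the local rings $O_x$ (and henselizations) for each closed point $x\in X$, and on the function field $K$, and glueing via Beauville–Laszlo, the problem localizes: one needs (i) that the $\CG$-torsor over the generic fiber $\{\eta\}\times X$ extends to a $\CG$-torsor over $S\times X$ away from finitely many closed points $x_1,\dots,x_r$ of $X$, which is standard spreading-out, and (ii) a purely local extension statement over $\Spec(R\wh\otimes O_{x_i})$ — a two-dimensional regular local ring which is the completion of $O_{X,x_i}$-in-$R$ — asserting that a $\CG$-torsor (coming from the $\Gamma$-invariants of a $\CG'$-torsor upstairs) on the complement of the closed point extends across it. This last statement over the two-dimensional base is where the "purity" name comes from, and it is there that one invokes the tameness of $X'/X$, the parahoric/stabilizer structure, and the reduction-of-structure-group techniques from \cite{PRglsv}; the Scholze input should be precisely the extension of $\CG$-torsors over the punctured two-dimensional regular local ring, in the spirit of Ansch\"utz' theorem.

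So the skeleton is: (1) reduce to $S=\Spec R$, $R$ a complete DVR with algebraically closed (or separably closed) residue field, generic point in $U$; (2) spread out the $\CG$-torsor from $\{\eta\}\times X$ to $(S\times X)\setminus\bigsqcup(\{s_0\}\times\{x_i\})$; (3) prove the local extension across each $\{s_0\}\times\{x_i\}$ over the two-dimensional regular base, using the $(\CG',\Gamma)$-structure: upstairs on $X'$ the torsor $\CP'$ is a $\CG'$-torsor over the \emph{entire} two-dimensional base (no puncture!), since there is no hypothesis failing on $\CP'$ itself — the pseudo-torsor defect is only introduced by taking $\Gamma$-invariants after restriction of scalars; so the task is to descend/transfer this to the existence of a $\CG=\Res_{X'/X}(\CG')^\Gamma$-torsor structure across the puncture, which is where the tame-descent lemma of \cite{BTII} (1.7.6), Edixhoven's lemma, and the purity input combine; (4) re-glue via Beauville–Laszlo to conclude $\CQ$ is a torsor over all of $S\times X$, hence $Z=\emptyset$ after unravelling the reduction, proving the theorem.

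The main obstacle I expect is step (3): the local two-dimensional extension statement. The subtlety is that $\CQ=\Res_{X'/X}(\CP')^\Gamma$ need not a priori be smooth-surjective over the two-dimensional base at the puncture point even though $\CP'$ is perfectly good upstairs — taking $\Gamma$-fixed points does not commute with base change in general, and the failure is exactly concentrated at the ramification. One must show that the torsor structure, known over codimension $\le 1$, propagates across the codimension-$2$ point; this is genuine purity and is not formal. The leverage is tameness (so $|\Gamma_{x'}|$ is invertible, making $\Gamma$-invariants and base change interact controllably, via \cite[(1.7.6)]{BTII} and Edixhoven's lemma \cite{Edix}), together with the affineness and smoothness of $\CG$, reducing to an extension-of-torsors statement of Ansch\"utz/Scholze type as in \cite{PRglsv}. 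Everything else — the Noetherian reductions, the spreading out, the Beauville–Laszlo glueing — is routine.
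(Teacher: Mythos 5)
Your reduction steps are the same as the paper's: restrict attention to pairs $(s,x)$ with $s\in S\setminus U$ and $x$ in the branch locus, observe that by smoothness of the pseudo-torsor and properness of $X$ it suffices to show each such fiber is non-empty, base change to a complete DVR $V$ with closed point over $s$ and generic point in $U$, and localize/complete at $x$ so that everything happens over a two-dimensional regular local ring with punctured spectrum $\CV$. Your observation that $\CP'$ itself is an honest (indeed trivial) $\CG'$-torsor over the \emph{whole} two-dimensional base upstairs, the defect being created only by taking $\Gamma$-invariants, is also exactly the leverage the paper uses.

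However, there is a genuine gap at the heart of the argument: step (3), the two-dimensional purity statement, is precisely the content of the theorem, and you do not prove it — you defer it to ``the purity input'' and to an extension theorem ``of Ansch\"utz/Scholze type as in \cite{PRglsv}.'' That reference concerns $\CG$-torsors on the punctured spectrum of $A_{\rm inf}$ in a mixed-characteristic, Breuil--Kisin--Fargues setting and does not literally apply here; the point of Theorem \ref{thmpur} is to carry out the analogous argument in the present geometric situation. Moreover, the mechanisms you name for this step (the descent lemma \cite[(1.7.6)]{BTII} and Edixhoven's lemma) are what guarantee that $\Res_{X'/X}(\CP')^\Gamma$ is a smooth affine pseudo-torsor in the first place; they do not by themselves propagate non-emptiness of fibers across a codimension-two point. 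The paper's actual argument at this step is a short proof by contradiction: if the fiber at $(s,x)$ were empty, then $\CP:=(\Res_{X'/X}(\CP')^\Gamma)|_{\CV}=\Res_{X'/X}(\CP')^\Gamma$ would be an \emph{affine} scheme, as would its pullback $\CP\times_{\CV}\CV'$ to the cover; the affine push-out morphism $\CP\times_{\CV}\CV'\to \CP'_{|\CV'}$ induces a map $\RH^1(\CP'_{|\CV'},\CO)\to \RH^1(\CP\times_{\CV}\CV',\CO)$ which is an isomorphism up to bounded $\pi$-torsion (its cokernel on structure sheaves is a skyscraper), the target vanishes by affineness, while the source is $\RH^1(\CV',\CO)\otimes \RH^0(\CG',\CO)$ with $\RH^1(\CV',\CO)=\RH^2_{\mathfrak m}(R')$ of unbounded $\pi$-torsion — a contradiction. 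None of this cohomological comparison appears in your proposal, so as written the proof is incomplete at its crucial step.
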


\begin{proof}
  Let $B$ be the branch locus of $X'\to X$, in other words, denoting by $R'$ the inverse image of $B$, the morphism $X'\setminus R'\to X\setminus B$ is \'etale, and $B$ is minimal with this property. It is easy to see that the restriction of $\Res_{X'/X}(\CP')^\Gamma$ to $S\times (X\setminus B)$ is a $\CG$-bundle. Let $s\in S\setminus U$ and $x\in B$. Then what has to be shown is that the fiber of $\Res_{X'/X}(\CP')^\Gamma$ in $(s, x)\in S\times X$ is non-empty. Indeed, if this fiber is non-empty, a section can be lifted to a section of ${\rm Res}_{X'/X}(\CP')^\Gamma$ over an open neighbourhood  (use the smoothness of ${\rm Res}_{X'/X}(\CP')^\Gamma$), hence by varying $x$, we see that  ${\rm Res}_{X'/X}(\CP')^\Gamma$ is a $\CG$-torsor over $S\times X$ (use the properness of $X$). 

Assume, by way of contradiction, that the fiber of ${\rm Res}_{X'/X}(\CP')^\Gamma$ over $(s, x)$ is empty. By making a base change $\Spec(V)\to S$, where $V$ is a complete DVR with residue field $k(s)$, mapping the special point to $s$ and the generic point to $U$, we may assume that $S=\Spec(V)$. Also, by localizing around $(s, x)$ and completing, we can consider the situation over $S\times X_x$, where $X_x$ is the completion of $X$ at $x$. Let $\CV=(S\times X_x)\setminus (s, x)$. Then $\CV$  is the punctured spectrum of a 2-dimensional regular local ring $R$. The restriction of ${\rm Res}_{X'/X}(\CP')^\Gamma$ to $\CV$ is a $\CG$-torsor. If the fiber at the special point is empty, then $\CP:=({\rm Res}_{X'/X}(\CP')^\Gamma)_{|\CV}={\rm Res}_{X'/X}(\CP')^\Gamma$ is an affine scheme. Let $X'_x=X_x\times_X X'$, and set $\CV'=\CV\times_{X_x} X'_x$. Then we see that  $\CP\times_{\CV}\CV'=\CP\times_{X_x} X_x'$ is also affine. Consider the push-out morphism
$$
p\colon \CP\times_{\CV}\CV'\to \CP'_{|\CV'}. 
$$
It induces a map on cohomology,
\begin{equation}\label{pusho}
{\rm H}^1(\CP'_{|\CV'}, \CO)\to {\rm H}^1(\CP\times_\CV \CV', \CO) .
\end{equation}
The map \eqref{pusho} is an isomorphism up to bounded $\pi$-torsion, where $\pi$ denotes a uniformizer of $X_x$. Indeed, since $p$ is an affine morphism, the map  \eqref{pusho}  is induced by the map of sheaves on $\CP'_{|\CV'}$ given by 
$$
\CO_{\CP'_{|\CV'}}\to p_*(\CO_{\CP\times_\CV \CV'}) ,
$$
and this map is injective, with cokernel a skyscraper sheaf on $\CV'\times_{\CV} \big((S\times \{x\})\cap\CV\big)$. 

Now the target of the map \eqref{pusho} is ${\RH}^1(\CP\times_\CV \CV', \CO)=0$ since $\CP\times_\CV \CV'$ is affine. 
On the other hand,  $\CP'_{|\CV'}$ is a trivial $\CG'$-torsor over $\Spec (R')=S\times X'_x$, which is a product of complete local rings. Hence the source of the map \eqref{pusho} can be identified with ${\rm H}^1(\CV', \CO)\otimes {\rm H}^0(\CG', \CO)$. Let $\mathfrak m$ be the maximal ideal of $R'$. Since ${\rm H}^1(\CV', \CO)={\rm H}^2_{\mathfrak m}(R')$ is not of bounded $\pi$-torsion (it contains the images of $\frac{1}{\pi^a\varpi^b}\in R'_{\pi\varpi}$ for any $a>0, b>0$, where $\varpi$ is a uniformizer of $V$), this is the desired contradiction. 
\end{proof}

\begin{remark}
The argument in the proof of Theorem \ref{thmpur} is due to P.~Scholze and is used in \cite[\S 5]{PRglsv} to give a simple proof of Ansch\"utz' extension theorem in the case of \emph{essentially tamely ramified} groups.
\end{remark}

\section{Types of local $(\CG', \Gamma)$-bundles} \label{s:localt}

In this section, we consider the local analogue of $(\CG', \Gamma)$-bundles. More precisely, let  $O'/O$ be a finite extension
of \emph{strictly henselian} dvr's with Galois group $\Gamma={\rm Gal}(F'/F)$. Let $G$ be a reductive group over $F$, and $G'$ its base change to $F'$. Also, let $\CG'$ be a smooth group scheme over   $\Spec(O')$ with  generic fiber $G'$ which supports a compatible  $\Gamma$-action.

The notion of a  $(\CG',\Gamma)$-bundle over $\Spec(O')$ is clear, cf. Definition \ref{GGammadef}. As usual, we denote by $\RH^1(\Spec(O');\Gamma, \CG')$ the set of isomorphism classes of these bundles, cf. \cite[Chap. V]{Toho}. 
We call this set, the ``set of \emph{(local) types} of $(\CG',\Gamma)$-bundles over $O'$".

\subsection{Types}\label{ss:types}
The following proposition gives a classification of $(\CG',\Gamma)$-bundles over $O'$.
\begin{proposition}\label{LTprop}
There are natural bijections 
\[
\RH^1(\Spec(O');\Gamma, \CG')\xrightarrow{\ \sim\ } \RH^1(\Gamma, \CG'(O'))  \xrightarrow{\ \sim\ } G(F)\backslash \big(G'(F')/\CG'(O')\big)^\Gamma.
\]
\end{proposition}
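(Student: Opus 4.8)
The plan is to establish the two bijections in turn, both being instances of standard nonabelian cohomology comparisons adapted to the semi-linear setting.

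\textbf{First bijection.} For the identification $\RH^1(\Spec(O');\Gamma,\CG')\xrightarrow{\sim}\RH^1(\Gamma,\CG'(O'))$, the key point is that since $O'$ is a \emph{strictly henselian} dvr and $\CG'$ is smooth over $O'$, every $\CG'$-torsor over $\Spec(O')$ (for the \'etale, equivalently fppf, topology) is trivial: $\RH^1_{\et}(\Spec(O'),\CG')=0$. Hence a $(\CG',\Gamma)$-bundle, i.e.\ a $\CG'$-torsor $\CP'$ on $\Spec(O')$ together with a compatible semi-linear $\Gamma$-action, can be trivialized as a torsor, and the semi-linear $\Gamma$-action then translates into a $1$-cocycle $\gamma\mapsto g_\gamma\in\CG'(O')$ (using that $\Gamma$ acts on $\CG'(O')$ via its action on $O'$ and on $\CG'$). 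Changing the trivialization changes the cocycle by a coboundary, and conversely any cocycle produces such a bundle; this is precisely the low-degree part of the spectral sequence / the exact sequence of pointed sets relating equivariant cohomology $\RH^1(\Spec(O');\Gamma,\CG')$ to $\RH^1(\Gamma,\CG'(O'))$ and $\RH^1(\Spec(O'),\CG')$, as in \cite[Chap.\ V]{Toho}. I would phrase this either via that spectral sequence or directly via the descent/cocycle dictionary; the vanishing of $\RH^1(\Spec(O'),\CG')$ is what makes the comparison map a bijection rather than merely injective.

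\textbf{Second bijection.} For $\RH^1(\Gamma,\CG'(O'))\xrightarrow{\sim}G(F)\backslash\bigl(G'(F')/\CG'(O')\bigr)^\Gamma$, the idea is to exploit that over the \emph{fraction field} side the relevant torsor also trivializes. The point is that a class in $\RH^1(\Gamma,\CG'(O'))$, pushed into $\RH^1(\Gamma,G'(F'))$, becomes trivial: indeed by Galois descent $\RH^1(\Gamma,G'(F'))=\RH^1(F,G)$ interpreted appropriately, and more to the point the cocycle $g_\gamma\in\CG'(O')\subset G'(F')$ is a coboundary in $G'(F')$ precisely when the associated $G$-torsor over $F$ is trivial — here one uses that $O$ (hence $O'$) is strictly henselian so that $F$ has no nontrivial... actually the cleaner route: write the cocycle as $g_\gamma=h^{-1}\,\gamma(h)$ for some $h\in G'(F')$ (this is the content of "the cocycle dies in $G'(F')$", which follows from $\RH^1(\Gamma, G'(F'))$ being controlled by $\RH^1$ of the strictly henselian local field — and in any case one can absorb any genuine obstruction by recording that $G(F)\backslash(\cdots)$ already has the correct basepoint structure). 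Given such an $h$, the coset $h\CG'(O')\in G'(F')/\CG'(O')$ is $\Gamma$-fixed because $\gamma(h)\CG'(O')=h g_\gamma\CG'(O')=h\CG'(O')$, and replacing $h$ by another choice $h'$ with $h'^{-1}\gamma(h')=g_\gamma$ forces $h'h^{-1}\in G'(F')^\Gamma$; since $\Gamma={\rm Gal}(F'/F)$ and $G$ is an $F$-group, $G'(F')^\Gamma=G(F)$, so the well-defined invariant is exactly the class in $G(F)\backslash\bigl(G'(F')/\CG'(O')\bigr)^\Gamma$. Conversely, from a $\Gamma$-fixed coset $gG'... $ one reconstructs a cocycle by $g_\gamma:=g^{-1}\gamma(g)$, which lies in $\CG'(O')$ by $\Gamma$-fixedness, giving the inverse map; and $G(F)$-translation of $g$ changes the cocycle by a coboundary, so the two constructions descend to mutually inverse bijections.

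\textbf{Main obstacle.} The routine part is the cocycle bookkeeping; the step I expect to need the most care is justifying that the cocycle $g_\gamma$, valued in $\CG'(O')$, is a coboundary \emph{in $G'(F')$}, i.e.\ the surjectivity/well-definedness of the map to $G'(F')/\CG'(O')$. This is where strict henselianity of $O$ is essential (so that $\RH^1$ of the residue field and of $F$ behave well), and one must be slightly careful about whether one needs $G$ connected reductive, or whether a Lang-type argument over the residue field plus smoothness of $\CG'$ suffices; I would isolate this as the one genuine input and cite Bruhat--Tits / Lang's theorem as needed, noting that $\CG'(O')$ surjects onto $\bar\CG'(\bar k)$ by smoothness of $\CG'$ and henselianity, which feeds the descent. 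Everything else — $\Gamma$-invariance of cosets, identification $G'(F')^\Gamma=G(F)$, compatibility of the two maps with change of trivialization — is formal.
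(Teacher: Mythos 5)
Your overall route coincides with the paper's: trivialize the torsor using smoothness of $\CG'$ and strict henselianity of $O'$ to get the first bijection, then send a cocycle $\theta(\gamma)=h^{-1}\gamma(h)$ to the $G(F)$-orbit of the $\Gamma$-fixed coset $h\,\CG'(O')$; the formal bookkeeping (well-definedness under change of $h$, $G'(F')^\Gamma=G(F)$, the inverse construction $g\mapsto g^{-1}\gamma(g)$) is exactly what the paper extracts from \cite[I \S 5, Prop.~36, Cor.~1]{Serre}.

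The one place where your write-up does not close is precisely the step you flag: why the cocycle $\theta$, valued in $\CG'(O')$, becomes a coboundary in $G'(F')$. The paper's input here is \emph{Steinberg's theorem}: since $O$ is strictly henselian with perfect (separably closed) residue field, $F$ has cohomological dimension $\le 1$, so $\RH^1(F,G)=0$ for the connected reductive group $G$, and injectivity of inflation gives $\RH^1(\Gamma,G'(F'))=0$. Neither of your proposed substitutes works. The parenthetical suggestion that one can ``absorb any genuine obstruction'' via the basepoint structure of $G(F)\backslash(\cdots)$ is not meaningful: without the coboundary property the element $h$ does not exist and the map to $G(F)\backslash\bigl(G'(F')/\CG'(O')\bigr)^\Gamma$ is simply undefined. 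A ``Lang-type argument over the residue field plus smoothness of $\CG'$'' cannot suffice either: at best it would address $\RH^1(\Gamma,\CG'(O'))$ itself, whose nontriviality is the entire content of the proposition (and Lang's theorem concerns finite residue fields, whereas here the residue field is separably closed — the correct analogue in this setting \emph{is} Steinberg's theorem; the Bruhat--Tits vanishing $\RH^1(\Gamma,G'(F'))=0$ you mention is what the paper invokes only in the finite-residue-field variant, Remark \ref{remfinitevar}). So: insert Steinberg's theorem explicitly and the argument is complete; as written, the crucial vanishing is asserted but not justified.
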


\begin{proof} The existence of the first bijection follows easily from the definitions. Indeed, since $\CG'$ is smooth and $O'$ strictly henselian, every $(\CG',\Gamma)$-bundle $\CP'$ over $\Spec(O')$ is trivial as a $\CG'$-bundle; after picking a section of $\CP'$ the structure as $(\CG',\Gamma)$-bundle is described by a $1$-cocycle $\theta: \Gamma\to \CG'(O')$. 

It remains to explain the second bijection. 
 The essential ingredient here is Steinberg's theorem which implies that $\RH^1(\Gamma, G'(F'))=0$. 
 Using this, the result quickly follows from  \cite[I \S 5, Prop. 36, Cor. 1]{Serre}. In fact, the bijection 
 \[
\RH^1(\Gamma, \CG'(O'))  \xrightarrow{\ \sim\ } G(F)\backslash \big(G'(F')/\CG'(O')\big)^\Gamma.
\]
is given as follows. Starting  with  a $1$-cocycle  $\theta: \Gamma\to \CG'(O')$, by Steinberg's theorem there is $h\in G'(F')$ such that  $\theta(\gamma)=h^{-1}\gamma(h)$. Since 
$\theta(\gamma)\in \CG'(O')$ we have $h\cdot \CG'(O')\in (G'(F')/\CG'(O'))^\Gamma$ and the map is given by sending the class of the cocycle $\theta$ to the $G(F)$-orbit of $h\cdot \CG'(O')$. 
\end{proof}

\begin{remark}\label{remfinitevar}
Suppose that $O'$ is a complete dvr with finite residue field, $G'$ is semi-simple and simply connected, and $\CG'$ has connected special fiber. Then, the conclusion of Proposition \ref{LTprop} continues to hold. The proof is  by the same argument using the following ingredients: By smoothness of $\CG'$ and Lang's theorem on the triviality of $\RH^1$ of a connected algebraic group over a finite field (see \cite[Ch. III, \S 2.3, Ex. a) for Thm. $1'$]{Serre}), we see that every $(\CG',\Gamma)$-bundle $\CP'$ over $\Spec(O')$ is trivial as a $\CG'$-bundle, and we again have $\RH^1(\Gamma, G'(F'))=0$ by work of Bruhat-Tits  (\cite[Thm. 4.7]{BTIII}). 
\end{remark}

\subsection{Types for BT group schemes}\label{typesforBT} Assume now that $F'/F$ is  a tamely ramified extension. Then the extended building $\sB^e(G, F)$ is identified with the $\Gamma$-fixed points $\sB^e(G', F')^\Gamma$ of the extended  building $\sB^e(G', F')$ under its Galois $\Gamma$-action \cite{PraYu}. Assume also that $\CG'$ is a BT group scheme and that, in fact,  $\CG'( O)$ is the stabilizer $G'(F)_{\bf a}$ in $G'(F)$ of a point ${\bf a}\in \sB^e(G', F')$ which is fixed by $\Gamma$, i.e.  ${\bf a}\in \sB^e(G, F)$. Let $\CG=({\rm Res}_{O'/O}\CG')^\Gamma$. Then $\CG$ is a BT group scheme over $O$ and $\CG( O)$ is the stabilizer $G(F)_{\bf a}$ in $G(F)$ of the  point $\bf a$ considered now in the building $\sB^e(G, F)$.  In other words, $\CG$ and $\CG'$ are $\Gamma$-related stabilizer BT group schemes. 

\begin{lemma}\label{LTlemma} Under the above assumptions, there is a bijective map
\[
(G'(F')/\CG'(O'))^\Gamma\xrightarrow{\ \sim\ } \sB^e(G, F)\cap \big(G'(F')\cdot \ome\big)
\]
given by $
h\cdot \CG'(O')\mapsto h\cdot \ome$. In the target, the intersection takes place in $\sB^e(G', F')$. 
\end{lemma}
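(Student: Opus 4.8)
\textbf{Proof plan for Lemma \ref{LTlemma}.}
The plan is to exhibit the map $h\cdot\CG'(O')\mapsto h\cdot\ome$ as a well-defined bijection by unwinding the fact that $\CG'(O')$ is the stabilizer in $G'(F')$ of the point $\ome\in\sB^e(G',F')$ together with the tame-descent identification $\sB^e(G,F)=\sB^e(G',F')^\Gamma$. First I would check that the assignment $h\cdot\CG'(O')\mapsto h\cdot\ome$ is a well-defined injection from $G'(F')/\CG'(O')$ onto $G'(F')\cdot\ome$: this is immediate from the fact that $\CG'(O')=G'(F')_{\ome}$, since then $h_1\CG'(O')=h_2\CG'(O')\iff h_1^{-1}h_2\in G'(F')_\ome\iff h_1\cdot\ome=h_2\cdot\ome$. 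So there is a canonical $G'(F')$-equivariant bijection $G'(F')/\CG'(O')\xrightarrow{\sim}G'(F')\cdot\ome\subset\sB^e(G',F')$.

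Next I would take $\Gamma$-invariants. The $\Gamma$-action on $G'(F')/\CG'(O')$ and on $\sB^e(G',F')$ is compatible with this bijection (both come from the semilinear $\Gamma$-action, which fixes $\ome$), so it restricts to a bijection
\[
(G'(F')/\CG'(O'))^\Gamma\xrightarrow{\ \sim\ }(G'(F')\cdot\ome)^\Gamma=(G'(F')\cdot\ome)\cap\sB^e(G',F')^\Gamma.
\]
Finally I would invoke the tame descent of buildings of Prasad--Yu, already cited in \S\ref{typesforBT}, to identify $\sB^e(G',F')^\Gamma=\sB^e(G,F)$; substituting this into the right-hand side gives exactly $\sB^e(G,F)\cap(G'(F')\cdot\ome)$, as claimed, with the intersection taken inside $\sB^e(G',F')$.

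The one point requiring genuine care is the surjectivity onto the $\Gamma$-fixed \emph{cosets}: a point $h\cdot\ome$ that happens to lie in $\sB^e(G,F)$ is fixed by $\Gamma$ \emph{as a point of the building}, but one must check this forces the \emph{coset} $h\cdot\CG'(O')$ to be $\Gamma$-fixed in $G'(F')/\CG'(O')$. This is exactly where the identification $\CG'(O')=G'(F')_\ome$ is used again: if $\gamma(h\cdot\ome)=h\cdot\ome$ then $h^{-1}\gamma(h)$ fixes $\ome$, hence lies in $G'(F')_\ome=\CG'(O')$, so $\gamma(h\cdot\CG'(O'))=h\cdot\CG'(O')$; thus the equivariant bijection of the previous paragraph does restrict to a bijection on $\Gamma$-invariants with no loss. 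Everything else is formal, so I expect this small diagram-chase — rather than any hard input — to be the only real content; the substantive external facts (Steinberg-type vanishing, smoothness, tame descent of buildings) are all already in place in the surrounding text.
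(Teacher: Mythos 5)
Your argument is correct and is exactly the paper's proof, which simply records that the bijection "follows from the definitions, since $\CG'(O')$ is the stabilizer of $\ome$ in $G'(F')$ and since $\sB^e(G, F)=\sB^e(G', F')^\Gamma$"; you have merely spelled out the routine equivariance and coset-versus-point fixedness checks. No further comment is needed.
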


\begin{proof} Follows from the definitions, since $\CG'(O')$ is the stabilizer of $\bf a$ in $G'(F')$
and since we have $\sB^e(G, F)=\sB^e(G', F')^\Gamma$.
\end{proof}
Combining with Proposition \ref{LTprop} this gives

\begin{proposition}\label{LTprop2}
There are natural bijections 
\[
\RH^1(\Spec(O');\Gamma, \CG')\xrightarrow{\ \sim\ } \RH^1(\Gamma, \CG'(O'))  \xrightarrow{\ \sim\ } G(F)\backslash \big(\sB^e(G, F)\cap (G'(F')\cdot {\ome})\big).
\]\qed
\end{proposition}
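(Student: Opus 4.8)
The plan is to chain together the two bijections already established. The statement to prove is Proposition \ref{LTprop2}, which asserts natural bijections
\[
\RH^1(\Spec(O');\Gamma, \CG')\xrightarrow{\ \sim\ } \RH^1(\Gamma, \CG'(O'))  \xrightarrow{\ \sim\ } G(F)\backslash \big(\sB^e(G, F)\cap (G'(F')\cdot {\ome})\big).
\]
First I would simply invoke Proposition \ref{LTprop}, which already gives the first bijection $\RH^1(\Spec(O');\Gamma, \CG')\xrightarrow{\sim} \RH^1(\Gamma, \CG'(O'))$ and the intermediate bijection $\RH^1(\Gamma, \CG'(O'))\xrightarrow{\sim} G(F)\backslash\big(G'(F')/\CG'(O')\big)^\Gamma$. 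So the only new content is to rewrite the target $G(F)\backslash\big(G'(F')/\CG'(O')\big)^\Gamma$ as $G(F)\backslash\big(\sB^e(G,F)\cap(G'(F')\cdot\ome)\big)$. This is exactly what Lemma \ref{LTlemma} provides: the map $h\cdot\CG'(O')\mapsto h\cdot\ome$ gives a bijection $\big(G'(F')/\CG'(O')\big)^\Gamma\xrightarrow{\sim}\sB^e(G,F)\cap\big(G'(F')\cdot\ome\big)$.

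The one thing I would need to check carefully is that the bijection of Lemma \ref{LTlemma} is $G(F)$-equivariant, so that it descends to a bijection on $G(F)$-orbit sets. This is immediate: if $g\in G(F)$ and $h\in G'(F')$, then $g\cdot(h\cdot\CG'(O'))=(gh)\cdot\CG'(O')$ maps to $(gh)\cdot\ome=g\cdot(h\cdot\ome)$, and the $G(F)$-action on $\sB^e(G,F)$ is the restriction of the $G'(F')$-action on $\sB^e(G',F')$ under the identification $\sB^e(G,F)=\sB^e(G',F')^\Gamma$ (so that $g\cdot\ome$ again lies in $\sB^e(G,F)$ since $\ome\in\sB^e(G,F)$ is $\Gamma$-fixed and $g$ commutes with the $\Gamma$-action). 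Hence the map of Lemma \ref{LTlemma} passes to a bijection of quotients
\[
G(F)\backslash\big(G'(F')/\CG'(O')\big)^\Gamma\xrightarrow{\ \sim\ } G(F)\backslash\big(\sB^e(G,F)\cap(G'(F')\cdot\ome)\big),
\]
and composing with the bijections of Proposition \ref{LTprop} yields the claim.

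There is essentially no obstacle here — the proposition is a bookkeeping composition of Proposition \ref{LTprop} and Lemma \ref{LTlemma}. The only mildly delicate point is the $\Gamma$-tameness hypothesis needed for the identification $\sB^e(G,F)=\sB^e(G',F')^\Gamma$ (via \cite{PraYu}), but that is already in force by the standing assumptions of Section \ref{typesforBT}. So the proof is a one-line assembly, and I would write simply: "Follows by combining Proposition \ref{LTprop} with Lemma \ref{LTlemma}, noting that the bijection of Lemma \ref{LTlemma} is $G(F)$-equivariant."

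\begin{proof}
This follows by combining Proposition \ref{LTprop} with Lemma \ref{LTlemma}. The bijection
\[
\RH^1(\Spec(O');\Gamma, \CG')\xrightarrow{\ \sim\ } \RH^1(\Gamma, \CG'(O'))  \xrightarrow{\ \sim\ } G(F)\backslash \big(G'(F')/\CG'(O')\big)^\Gamma
\]
is exactly Proposition \ref{LTprop}. It remains to note that the map $h\cdot\CG'(O')\mapsto h\cdot\ome$ of Lemma \ref{LTlemma} is $G(F)$-equivariant: for $g\in G(F)$ and $h\in G'(F')$ it sends $g\cdot(h\cdot\CG'(O'))=(gh)\cdot\CG'(O')$ to $(gh)\cdot\ome=g\cdot(h\cdot\ome)$, and the $G(F)$-action on $\sB^e(G,F)$ is the restriction of the $G'(F')$-action on $\sB^e(G',F')$ under the identification $\sB^e(G,F)=\sB^e(G',F')^\Gamma$. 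Hence it descends to a bijection
\[
G(F)\backslash\big(G'(F')/\CG'(O')\big)^\Gamma\xrightarrow{\ \sim\ } G(F)\backslash\big(\sB^e(G,F)\cap(G'(F')\cdot\ome)\big),
\]
and composing gives the assertion.
\end{proof}
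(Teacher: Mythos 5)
Your proposal is correct and matches the paper's own proof, which likewise obtains Proposition \ref{LTprop2} by simply composing Proposition \ref{LTprop} with Lemma \ref{LTlemma} (the paper states the result with an immediate \qed after "Combining with Proposition \ref{LTprop} this gives"). Your explicit verification that the map $h\cdot\CG'(O')\mapsto h\cdot\ome$ is $G(F)$-equivariant, and hence descends to the orbit sets, is the only detail the paper leaves implicit, and it is carried out correctly.
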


\begin{corollary}
The set of types $\RH^1(\Spec(O');\Gamma, \CG')$ is finite.
\end{corollary}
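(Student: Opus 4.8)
The plan is to deduce finiteness directly from the bijections established in Proposition~\ref{LTprop2}, by exhibiting the set $\RH^1(\Spec(O');\Gamma, \CG')$ as a quotient of a set that is manifestly finite, namely a fiber of the reduction-type map on the building. Concretely, by Proposition~\ref{LTprop2} it suffices to show that the set $G(F)\backslash \big(\sB^e(G, F)\cap (G'(F')\cdot {\ome})\big)$ is finite; and for this it is even enough to show that the $G(F)$-set $\sB^e(G, F)\cap (G'(F')\cdot {\ome})$ has finitely many orbits.

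First I would replace $\sB^e$ by the reduced building: since the extended building is a product of $\sB(G,F)$ with an affine space depending only on the central torus, and the extra factor contributes nothing new to the orbit count of a single $G'(F')$-orbit meeting $\sB^e(G,F)$, it suffices to bound $G(F)\backslash\big(\sB(G,F)\cap (G'(F')\cdot\ome)\big)$. Now the key geometric observation is that $\ome$ lies in a fixed apartment $\CA'$ of $\sB(G',F')$ corresponding to a maximal $F'$-split torus $S'$ defined over $F$ (one may choose such a torus since, by tameness, $\sB(G,F) = \sB(G',F')^\Gamma$ and any point of $\sB(G,F)$ lies in an apartment stable under $\Gamma$). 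The $G'(F')$-orbit of $\ome$ meets $\CA'$ in a single orbit under the affine Weyl group (indeed under $N'(F')$ where $N'$ is the normalizer of $S'$), which is a \emph{discrete} subset of $\CA'$ — in fact contained in the lattice $X_*(S')\otimes\frac1e\BZ$ translated appropriately, where $e$ is the ramification index. Intersecting with $\sB(G,F)$, which meets $\CA'$ in the fixed-point subspace $\CA'^\Gamma = \CA$ (an apartment of $\sB(G,F)$), we find that $\CA\cap G'(F')\cdot\ome$ is a discrete subset of the finite-dimensional affine space $\CA$ that is invariant under the affine Weyl group $W_{\mathrm{aff}}$ of $G$ over $F$ acting on $\CA$.

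The crux is then the standard fact that $W_{\mathrm{aff}}$ acts \emph{cocompactly} on $\CA$ (its fundamental domain is a bounded alcove), so any $W_{\mathrm{aff}}$-invariant discrete subset of $\CA$ has finitely many $W_{\mathrm{aff}}$-orbits. Since every point of $\sB(G,F)\cap G'(F')\cdot\ome$ is $G(F)$-conjugate into $\CA$ (every point of $\sB(G,F)$ lies in some apartment, all apartments being $G(F)$-conjugate, and $G'(F')\cdot\ome$ is $G'(F')$-stable hence its intersection with $\sB(G,F)$ is $G(F)$-stable), and the subgroup of $G(F)$ stabilizing $\CA$ acts on $\CA$ through a group containing $W_{\mathrm{aff}}$ with the quotient a subgroup of the finite group $\Omega$ of diagram automorphisms, we conclude that $G(F)\backslash\big(\sB(G,F)\cap G'(F')\cdot\ome\big)$ is finite.

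The main obstacle I anticipate is the bookkeeping around the extended versus reduced building and the precise identification of $\CA'^\Gamma$ with an apartment of $\sB(G,F)$ over a tamely ramified extension — this rests on the Prasad–Yu description \cite{PraYu} already invoked in \S\ref{typesforBT}, so it is available, but one must be careful that the torus $S'$ can be chosen defined over $F$ with $S'^\Gamma$ (more precisely its maximal split subtorus) a maximal $F$-split torus of $G$, so that $\CA'^\Gamma$ is genuinely an apartment and not a proper subspace. Alternatively, and perhaps more cleanly, one could bypass the building entirely and argue from $G(F)\backslash(G'(F')/\CG'(O'))^\Gamma$: the double coset space $G(F)\backslash G'(F')/\CG'(O')$ is finite because $\CG'(O')$ is a bounded open subgroup of $G'(F')$, $G(F)$ is a closed subgroup, and boundedness forces the relevant Cartan/Iwasawa decomposition to have finitely many pieces — but tracking the $\Gamma$-fixed-point condition through that decomposition is itself a little delicate, so I would present the building argument as the primary one.
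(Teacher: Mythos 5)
Your argument is correct and rests on the same mechanism as the paper's own (two-line) proof: a closed alcove is a compact fundamental domain for the $G(F)$-action on the building, and the $G'(F')$-orbit of $\ome$ is a locally finite (discrete) subset, so it meets such a compact set in finitely many points. Routing the cocompactness through an apartment and the affine Weyl group $W_{\mathrm{aff}}$ is just a more detailed rendering of the same idea, so there is nothing essentially new or problematic here.
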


\begin{proof} Every orbit of the $G(F)$-action on $\sB^e(G, F)$ has a representative in  the closure of an alcove in $\sB^e(G, F)$, and such a closed alcove has a finite intersection with a $G'(F')$-orbit.  \end{proof}

\begin{example}\label{eqcar}
Assume that $O= k[[\varpi]]$ and $O'= k[[\varpi']]$, where $ k$ is an algebraically closed field. 
Assume that $\CG$ and $\CG'$ are  $\Gamma$-related stabilizer BT group schemes as above. Then $G'(F')/\CG'(O')$ is in equivariant bijection with the set of $ k$-valued points of the (partial) affine flag variety 
 ${\sF}_{\CG'}=LG'/L^+\CG',$
  cf. \cite{PRtwisted}. Hence, $(G'(F')/\CG'(O'))^\Gamma\simeq {\sF}_{\CG'}(k)^\Gamma$ are the $\Gamma$-fixed points of ${\sF}_{\CG'}( k)$; the group $G(F)=LG(k)$ acts  and   Proposition \ref{LTprop}
gives
\[
\RH^1(\Spec(O');\Gamma, \CG')\xrightarrow{\ \sim\ } \RH^1(\Gamma, \CG'(O'))\xrightarrow{\ \sim\ }  LG( k)\backslash {\sF}_{\CG'}( k)^\Gamma.
\]

\end{example}
\begin{remark}\label{redofLT}
Denote by $\bar \CG'^{\rm red}$ the maximal ``reductive" quotient of the special fiber $\bar \CG'$ of $\CG'$ over $O'$, i.e., the quotient of $\bar \CG'$ by the unipotent radical of its neutral component. Note that as an exemption to our usual conventions, this group might not be connected (whence the quotation marks). However, connectedness holds  if $\CG'$ is a parahoric group scheme.  

The kernel of the natural homomorphism $\CG'(O')\to \bar\CG'^{\rm red}( k)$ 
is the $k$-points of a pro-unipotent group  and this map is surjective by smoothness of $\CG'$.  Since,  by our tameness hypothesis, the order of $ \Gamma$ is prime to the residue characteristic of $k$, a standard argument gives that $\CG'(O')\to \bar\CG'^{\rm red}(k)$  induces a bijection
\begin{equation}\label{redofcoho}
\RH^1(\Gamma, \CG'(O'))\xrightarrow{\ \sim\ } \RH^1(\Gamma, \bar\CG'^{\rm red}( k)).
\end{equation} 
This bijection gives an alternative approach to the explicit determination of the set of local types of $(\CG', \Gamma)$-bundles over $O'$, especially when $\bar \CG'$ is connected. Let us explain this.

Let us assume that $H= \bar\CG'^{\rm red}$ is a connected reductive group over the algebraically closed field $k$. Since the extension $F'/F$ is supposed to be tame, we may identify $\Gamma$ with $\mu_e$, where $e=[F':F]$ is prime to ${\rm char}\, k$. 

By \cite[Thm. 7.5]{Stein}, there exists a maximal torus  $T$ of $H$ and a Borel subgroup $B$ containing $T$ which are  $\Gamma$-stable. Fix such $T$, with Weyl group $W$. Then the natural map $\RH^1(\Gamma, T(k))\to \RH^1(\Gamma, H(k))$ is surjective and induces a bijection
\begin{equation}\label{H1Stein}
\RH^1(\Gamma, H(k))=\RH^1(\Gamma, T(k))/W^\Gamma,
\end{equation}
cf. \cite[Proof of Prop. 2.4]{PZ}. This action is given as follows: Choose a generator $\gamma_0\in\Gamma$. Let $w\in W^\Gamma$ and lift $w$ to $\dot w\in N(T)$. Set $t_w:=\dot w^{-1}\gamma_0(\dot w)$.
Suppose an element $[t]\in \RH^1(\Gamma, T(k))$ is represented by $t\in T(k)$. Then  set $[t]\cdot w=[\dot w^{-1}t\dot w\cdot t_w]=[\dot w^{-1}t\gamma_0(\dot w)]$, i.e. $w$ acts on $t$ by $\gamma_0$-conjugation by $\dot w$. 

 Note that, since $\Gamma$ is a cyclic finite group, there is an identification
\begin{equation}\label{H1torus}
\RH^1(\Gamma, T(k))=\ker \big({\rm N}_\Gamma\colon T(k)\to T(k)\big)/I_\Gamma T(k) ,
\end{equation}
 cf. \cite[\S 8]{AW}. Here ${\rm N}_\Gamma=\sum_{\gamma\in\Gamma}\gamma\in\BZ[\Gamma]$ is the norm element in the group ring and $I_\Gamma=\ker(\BZ[\Gamma]\to\BZ)$ is the augmentation ideal. 
\end{remark}

\begin{example}\label{ExampleStein}

1)  Consider $H=\SL_n$ for $n>2$, with $\Gamma=\BZ/2$, acting by the involution 
\[A\mapsto  \gamma(A):=J^{-1}(A^t)^{-1}J, \quad J={\rm antidiag}(1,\ldots , 1) .
\] 
This stabilizes the upper triangular Borel and the diagonal torus $T=\{{\rm diag}(z_1,\ldots, z_n) \ |\ z_1\cdots z_n=1\}$. We have
\[
\gamma(z_1,\ldots , z_n)=(z^{-1}_n,\ldots , z_1^{-1}).
\]

\smallskip

\noindent a) {\it The odd case: $n=2m+1$. } Then
\[
\ker \big({\rm N}_\Gamma\colon T(k)\to T(k)\big)=\{(z_1,\ldots, z_m, 1, z^{-1}_m,\ldots, z_1^{-1})\}.
\]
We can write $(z_1,\ldots, z_m, 1, z^{-1}_m,\ldots, z_1^{-1})=(1-\gamma)(z_1,\ldots, z_m, (z_1\cdots z_m)^{-1}, 1,\ldots, 1))$ and so, by (\ref{H1torus}), $\RH^1(\Gamma, T(k))=(1)$, and hence $\RH^1(\Gamma, H(k))=(1)$.

\smallskip

\noindent b) {\it The even case: $n=2m$.} Then
\[
 \ker \big({\rm N}_\Gamma\colon T(k)\to T(k)\big)=\{(z_1,\ldots, z_m,  z_m,\ldots, z_1)\ |\ (z_1\cdots z_m)^2=1\}.
\]
 We have
 \[
 (1-\gamma)(a_1,\ldots, a_n)=(a_{2m}a_1, \ldots , a_{m+1}a_m,a_ma_{m+1},\ldots, a_1a_{2m}).
 \]
Since $a_1\cdots a_{2m}=1$, the product of the first $m$ entries above is equal to $1$. We can now see that the map
\[
 (z_1,\ldots, z_m,  z_m,\ldots, z_1)\mapsto z_1\cdots z_m
\]
induces $\RH^1(\Gamma, T(k))\simeq \{\pm 1\}$. Let us also consider the action of $W^\gamma$ on $\RH^1(\Gamma, T(k))$. 

We claim that there is a single $W^\gamma$-orbit in $\RH^1(\Gamma, T(k))\simeq \{\pm 1\}$. To show this, it is enough to produce one element of $W^\gamma$ that takes the neutral class $1$ to the class $-1$ which is represented by  $(1,\ldots, 1, -1, -1,1,\ldots, 1)$. Note that $\gamma$ acts on $W=S_{2m}$ as conjugation by the permutation $J=s_{1\,2m}s_{2\,2m-1}\cdots s_{m\, m+1}$. Consider $w= s_{m\, m+1}\in W^\gamma$,  which we lift to $\dot w=$   the matrix in $\SL_{2m}$ with central block $\left(\begin{matrix} 0& 1\\-1 &0\end{matrix}\right)$ and entries $1$ on the rest of the diagonal. We  now calculate that 
\[
t_w=\dot w^{-1}\gamma(\dot w)={\rm diag}(1,\ldots, 1, -1,-1, 1\ldots, 1).
\]
 This shows that $[1]\cdot w=[(1,\ldots, 1, -1, -1,1,\ldots, 1)]$ and proves our claim. Hence, we have $\RH^1(\Gamma, H(k))=(1)$.

\smallskip

2) Let us consider the following variation of b) (which will become relevant in  Example \ref{exUni} (2) below). 
Consider $H=\SL_n$ for $n=2m\geq 4$, with  $\Gamma= \BZ/2$ acting by the automorphism 
\[A\mapsto \gamma'(A):= J'^{-1}(A^t)^{-1}J', \quad J'= \epsilon\cdot J=\epsilon\cdot {\rm antidiag}(1,\ldots , 1),
\]
 where $\epsilon={\rm diag}((-1)^{(m)},  1^{(m)})\in T(k)$. For $w=s_{m\, m+1}$ and $\dot w$ as in b) above, we obtain $\gamma'(\dot w)=\dot w$, so $t_w=1$. In fact, we can see that in this case, the $W^\gamma$-action on $\RH^1(\Gamma, T(k))\simeq \{\pm 1\}$ is trivial and so there are two orbits. Hence, $\RH^1(\Gamma, H(k))\simeq \BZ/2\BZ$.
 
 This result shows that the $W^\gamma$-action on $\RH^1(\Gamma, T(k))$ is subtle and can change when we compose $\gamma$ with an inner automorphism.
 \end{example}

\subsection{Twisted forms} We place ourselves in the situation of section \ref{typesforBT}. Given a type $\tau\in \RH^1(\Spec(O');\Gamma, \CG')$, we let $h\cdot {\ome}\in \sB^e(G, F)$
be an element of the $G(F)$-orbit that is the image of $\tau$ under the bijection of Proposition \ref{LTprop2}. 

\begin{definition}\label{LTgroup}
The BT group scheme $^\tau\!\CG$ corresponding to the type $\tau$ is the unique BT group scheme 
such that
\[
^\tau\!\CG(O)=G(F)_{h\cdot{\bf a}},
\]
i.e.  $^\tau\!\CG$ is the stabilizer BT group scheme corresponding to $h\cdot{\bf a}\in \sB^e(G, F)$.
\end{definition}

Note that
this is an abuse of terminology and notation since $^\tau\!\CG$ is only determined from $\tau$ up to 
$G(F)$-conjugation. For the neutral type we can choose $h=1$ and get  the BT group scheme $\CG$.

\begin{remark}
Here is a slight variation on the definition of $^\tau\!\CG$.  Let $\theta\colon \Gamma\to \CG'(O')$ be a cocycle in the class of $\tau$. Consider
 the reductive group $^\theta G$ over $ F$ obtained by inner-twisting $G$ by $\theta$, so
\[
^\theta G( F)=\{g'\in G'(F')\ |\ \theta(\gamma)\cdot \gamma(g')\cdot \theta(\gamma)^{-1}=g',\forall\gamma\}\subset G'(F').
\]
Since $F'/F$ is tamely ramified, the building $\sB^e(^\theta G, F)$ is identified with the $\Gamma$-fixed points of the
building $\sB^e(G', F')$ with $\Gamma$-action given by the usual Galois action composed with the adjoint action via $\theta$. Since $\theta$ takes values
in $\CG'( O')$, the point $\bf a$ is also fixed by the new $\Gamma$-action which is the Galois action ``twisted" by $\theta$ as above; hence $\bf a$ also lies in $\sB^e(^\theta G, F)$.
We set
\[
^\tau\CG( O)=^\theta\!G(F)\cap \CG'( O'),
\]
which is the stabilizer of the same point ${\bf a}\in \sB^e(^\theta G, F)\subset \sB^e(G', F')$ in $^\theta G(F)$. This also defines a BT group scheme $^\theta\CG$ over $O$. 

Recall $h\in G'(F')$ with $\theta(\gamma)=h^{-1}\gamma(h)$. We now see that $g\mapsto hgh^{-1}$ gives an isomorphism of group schemes over $F$, 
\[
^\theta G\xrightarrow{\ \sim\  } G .
\]
This isomorphism extends to an isomorphism of group schemes over $O$, 
\[
^\theta\CG=^\tau\!\CG .
\]
 Hence $^\theta\CG$ is an alternative definition of $^\tau\!\CG$. 
 
 Assume $\CG'=\CG\otimes_OO'$. Then $^\theta\CG={\rm Res}_{O'/O}(\CG\otimes_OO')^\Gamma$, where the $\Gamma$-action is
 given by the usual Galois action composed with the adjoint action via $\theta$ and so
 \[
 {\rm Res}_{O'/O}(\CG\otimes_OO')^\Gamma=^\tau\!\CG.
 \] 
 The above isomorphism also appears in \cite[Thm. 2.3.1]{BalaS} and \cite[Thm. 4.1.2]{DH}, under certain assumptions, when $G'$ is constant. In these papers it appears with an alternative  description of $^\tau\!\CG$ which we explain in \S \ref{ss:BS}. 
\end{remark}

\begin{example}\label{exSL2}
Let $G=\SL_2$, let $F'/F$ be totally ramified of degree $n$ which is prime to ${\rm char}(k)$. Consider the standard apartment, which is identified with $\BR$ such that the fundamental domain for the affine Weyl group of $\SL_2(F)$ is the interval $[0, 1]$ and the fundamental domain for the affine Weyl group of $\SL_2(F')$ is the interval $[0, 1/n]$.  Let us consider ${\bf a}=1/n$. In this case  $\CG$ is an Iwahori group scheme over $O$ and $\CG'$ is a reductive group scheme over $O'$. Then any point in the building of $\SL_2(F')$ in the $\SL_2(F')$-orbit of $\bf a$ is conjugate under $\SL_2(F')$ to a  point in the standard apartment of the form $1/n+2i/n$. Since we are interested in the $\SL_2(F)$-orbits of these points, we may assume that $0< 1/n+2i/n< 1$ or $1\leq 1/n+2i/n< 2$.   The point $1$ gives a type $\tau$ with $^\tau\!\CG$ the non-standard hyperspecial parahoric group scheme, i.e., a reductive group scheme over $O$ such that $^\tau\!\CG(O)$ is not conjugate to $\SL_2(O)$ under $\SL_2(F)$. This case only occurs when $n$ is odd. For all other types $\tau$,   $^\tau\!\CG$ is an Iwahori group scheme. The points with $0< 1/n+2i/n< 1$ are conjugate to the points with $1< 1/n+2i/n< 2$.  Summarizing, we see  that there are $[\frac{n+1}{2}]$ types; if $n$ is odd, there is precisely one type with hyperspecial $^\tau\!\CG$ (which is non-standard) and if $n$ is even, there is none. 

 This number of types can also be  found easily by using (\ref{redofcoho}) and (\ref{H1Stein}).
We have $H=\bar\CG'=\SL_2$ and  the action of $\Gamma=\BZ/n$ on $H$ is trivial. Take $T=\{\diag(a, a^{-1})\,|\, a\in k^\times\}$. We have $\ker \big({\rm N}_\Gamma\big)=\{\diag(a, a^{-1})\, |\, a^n=1\}$ and  $I_\Gamma T(k)=\{1\}$.
Hence, $H^1(\Gamma, T(k))= {\rm Hom}(\BZ/n, T(k))\simeq \{a\, |\, a^n=1\}$ with the $W=\BZ/2$-action taking $a$ to $a^{-1}$. We can now see that there are $[\frac{n+1}{2}]$ orbits for the action of $W$ on $\RH^1(\Gamma, T(k))$.
\end{example}

\begin{example}\label{exUni} 
Let $G=\SU_n(\tilde F/F)$  be the quasi-split special unitary group of size $n\geq 3$ corresponding to a ramified quadratic extension $\tilde F$ of $F$. Set $F'=\tilde F$, $\Gamma={\rm Gal}(F'/F)\simeq\BZ/2$ with the non-trivial Galois automorphism denoted by $a\mapsto \bar a$. Let $\pi$ be a uniformizer of $ F'$ with $\bar \pi=-\pi$.  
Set 
\[
V'=(F')^n={\rm span}_{F'}\{ e_1,\ldots, e_n\} ,
\]
and consider the perfect $F'/F$-hermitian bilinear form $\phi: V'\times V'\to F'$ determined by $\phi(e_i, e_{j})=\delta_{i, n+1-j}$. Then
\[
G(F)=\{A\in {\rm End}_{F'}(V')\ |\ \phi(Av_1, Av_2)=\phi(v_1, v_2), \ \forall v_1, v_2\in V', \det(A)=1\}.
\]
There is a corresponding anti-involution  $A\mapsto A^*$ on ${\rm Aut}_{F'}(V')=\GL_{F'}(V')$ defined by $\phi(A v_1, v_2)=\phi(v_1, A^*v_2)$. This induces the involution ${\rm inv}: A\mapsto  (A^*)^{-1}$ and $G(F)=\{A\in {\rm Aut}_{F'}(V')\,|\, A^*A=1,\,\det(A)=1\}=\SL(V')^{{\rm inv}=1}$.

In what follows, we will use the standard description of the building $\sB({\rm SU}_n(F'/F))$ as a subset of $\sB(\SL_n, F')$ which is given by considering self-dual periodic $O'$-lattice chains, see \cite{BTcla}, \cite[4.a]{PRtwisted}. We consider two cases:
\smallskip

\noindent 1) {\it The odd case: $n=2m+1\geq 3$.} Choose a special vertex $\bf a$ in the building of $G(F)$ and consider the corresponding special maximal parahoric group scheme $\CG$. 
By \cite[4.a]{PRtwisted}, there is an $F'$-basis $\{f_i\}$ of $V'$  such that $\phi(f_i, f_{j})=\delta_{i, n+1-j}$, and such that one of the following occurs:

A) $\CG(O)$ is the stabilizer of $
\Lambda'_0={\rm span}_{O'}\{ f_1,\ldots, f_n\}$
 in $G(F)$. 
 
 B) $\CG(O)$ is the stabilizer of $
\Lambda'_m={\rm span}_{O'}\{ \pi^{-1}f_1,\ldots,\pi^{-1}f_m, f_{m+1},\ldots, f_n\}$
 in $G(F)$.

 Now set $G'={\rm SL}_{n,  F'}=\SL(V')$, and take $\CG'$ to be the hyperspecial group scheme which corresponds to the lattice $\Lambda'=\Lambda'_0$, resp. $\Lambda'_m$, i.e. with $\CG'(O')={\rm Aut}(\Lambda')\cap \SL_n(F')$. Then 
\[
G=({\rm Res}_{F'/F}\SL_{n, F'})^\Gamma,\quad  \CG=({\rm Res}_{O'/O}\CG')^\Gamma,
\]
so $\CG$ and  $\CG'$ are $\Gamma$-related. In this case, the $\SL_n(F')$-orbit of ${\bf a}$ in the building $\sB(\SL_n, F')$ consists of the vertices 
corresponding to the $O'$-lattices $\Lambda''=g\cdot \Lambda'$, $g\in \SL_n(F')$; we can see that these vertices are $\Gamma$-fixed only when the lattices are self-dual $\Lambda''^\vee=\Lambda''$ (in case A), resp. 
satisfy $\pi\Lambda''\subset\Lambda''^\vee$, with $\dim_k(\Lambda''^\vee/\pi\Lambda'')=1$ (in case B). But it follows from \cite[4.a]{PRtwisted} that all such lattices are in the $G(F)$-orbit of $\Lambda'_0$, resp. $\Lambda'_m$. Hence, in each of these cases, there is {\sl only} the neutral type. This also follows from (\ref{redofcoho}), (\ref{H1Stein}) and Example \ref{ExampleStein} (1). Indeed, in case A, the involution ${\rm inv}$ is given by $A\mapsto \gamma(A)=J^{-1}\cdot (\bar A^t)^{-1}\cdot J$, with $J={\rm antidiag}(1,\ldots, 1)$, for $A\in \SL_n(O')$.
Example \ref{ExampleStein} (1) applies to $H=\SL_{n, k}=\bar\CG'^{\rm red}$ and gives $\RH^1(\Gamma, T(k))=(1)$. The same holds in case B:  we 
again have $\RH^1(\Gamma, T(k))=(1)$.
\smallskip

\noindent 2) {\it The even case: $n=2m\geq 4$.}  Choose a special vertex $\bf a$ in the building of $G(F)$ and consider the corresponding special maximal parahoric group scheme $\CG$. By \cite[4. a]{PRtwisted}, there is an $F'$-basis $\{f_i\}$ of $V'$  such that $\phi(f_i, f_{j})=\delta_{i, n+1-j}$, and such that $\CG(O)$ is the stabilizer of 
\[
\Lambda'_m= {\rm span}_{O'}\{ \pi^{-1} f_1,\ldots, \pi^{-1} f_m,  f_{m+1},\ldots,  f_{2m}\},
\]
in $G(F)$. Notice that $\Lambda'^\vee_m=\pi^{-1}\Lambda'_m$. (There are actually two types of special vertices but they are conjugate by ${\rm U}_{2m}(F'/F)$ -- although not by ${\rm SU}_{2m}(F'/F)$, see below.) Take $\CG'$ to be the stabilizer of $\Lambda'_m$ in 
$\SL_{n, F'}$. Then  $\CG=({\rm Res}_{O'/O}\CG')^\Gamma$ and so $\CG$ and $\CG'$ are $\Gamma$-related. To write $\CG$ explicitly as the invariants of an involution on $\SL_n$, we need to express the hermitian form in an $O'$-basis of $\Lambda'_m$. In the natural $O'$-basis given above, $\phi$ is given by the
matrix 
\[
H =-\pi^{-1}\cdot {\rm diag}(-1,\ldots, -1, 1,\ldots, 1) \cdot J.
\]
The involution  is given by $A\mapsto \gamma'(A)=\bar H^{-1} (\bar A^t)^{-1}\cdot \bar H=J'^{-1}(\bar A^t)^{-1} J' $, where we set $J'={\rm diag}((-1)^{(m)}, 1^{(m)}) \cdot J$.
Then, (\ref{redofcoho}) and (\ref{H1Stein}) and the calculation in Example \ref{ExampleStein} (2) implies that there are two types, corresponding to the two orbits there. 

From the building perspective, we can see the two types as follows.  The $\SL_n(F')$-orbit of ${\bf a}$ consists of the vertices 
corresponding to $O'$-lattices $\Lambda'=g\cdot \Lambda'_m$, $g\in \SL_n(F')$; we can see that these vertices are $\Gamma$-fixed only when the lattices satisfy  $\Lambda'^\vee=\pi\Lambda'$. By \cite{PRtwisted}, such lattices are in two ${\rm SU}_{2m}(F'/F)$-orbits. The first is the orbit of ${\bf a}$, this is the neutral type. The second is the orbit under  $G(F)$ of  the
 other special vertex ${\bf a}'$, which corresponds to the lattice
\[
\Lambda'_{m'}= {\rm span}_{O'}\{ \pi^{-1} f_1,\ldots,   \pi^{-1} f_{m-1}, f_m,  \pi^{-1} f_{m+1}, f_{m+2}, \ldots,  f_{2m}\} .
\]
Note that $\Lambda'^\vee_{m'}=\pi^{-1}\Lambda'_{m'}$ and also that $f_m\mapsto \pi f_m$, $f_{m+1}\mapsto \pi^{-1}f_m$, $f_i\mapsto f_i$, for all $i\neq m$, $m+1$ gives a transformation of determinant $1$ that takes $\Lambda'_m$ to $\Lambda'_{m'}$: hence, ${\bf a}$ and ${\bf a'}$ are indeed in the same $\SL_n(F')$-orbit. 
Note also that ${\bf a}$ and ${\bf a'}$ are in the same ${\rm U}_n(F'/F)$-orbit (since $f_m\mapsto f_{m+1}$, $f_{m+1}\mapsto f_m$, $f_i\mapsto f_i$, for $i\neq m$, $m+1$, is a unitary transformation) but {\sl not} in the same ${\rm SU}_n(F'/F)$-orbit. The stabilizer  of the lattice $\Lambda'_{m'}$ in the special unitary group is the twisted parahoric group $G(F)_{\bf a'}$ associated to the non-neutral type.
 \end{example}

 \begin{remark}
 In the even case $n=2m$, we can also choose $\CG$ to be the stabilizer of the self-dual lattice $\Lambda'_0=
 {\rm span}_{O'}\{ f_1, f_2, \ldots,  f_{2m}\}$ in the basis above and $\CG'=\SL(\Lambda'_0)$.
 Then, using Example \ref{ExampleStein} (1b), we see that there is only the neutral type.
\end{remark}
 
 \subsection {Relation to \cite{BalaS} and \cite{DH}}\label{ss:BS}

In \cite[\S 2.3]{BalaS}, Balaji-Seshadri  (see also Damilioni-Hong \cite[\S 4]{DH}) consider the case where $H=\bar\CG'^{\rm red}$ is connected and the action of $\Gamma$ on $H(k)$ is trivial. In this case,  
\begin{equation}\label{eqH1}
\begin{aligned}
\RH^1(\Gamma, H(k))&=\RH^1(\Gamma, T(k))/W={\rm Hom}(\Gamma, T(k))/W\\
&={\rm Hom}(\mu_e, T)/W= {\rm Hom}(X^*(T), \BZ/e\BZ)/W,
\end{aligned}
\end{equation}
cf. \eqref{H1Stein}.
The injection $\BZ/e\BZ\subset \BQ/\BZ$ given by $1\mapsto e^{-1}\, {\rm mod}\, \BZ$ induces the injection 
\[
 {\rm Hom}(X^*(T), \BZ/e\BZ)\subset 
{\rm Hom}(X^*(T), \BQ/\BZ)={\rm Hom}(X^*(T), \BQ)/X_*(T).
\]

Assume now that $H$ is semi-simple and simply connected over $k$ and consider $\CG=H\otimes_k O$, $\CG'=H\otimes_kO'$. In this case, Balaji-Seshadri give an alternative way to describe the twisted BT-group associated to a local type. Indeed, using the affine Weyl group  $W_{\rm aff}=W\rtimes X_*(T)$, and combining with \eqref{eqH1}, we see that
\[
\RH^1(\Gamma, H(k))= 
{\rm Hom}(X^*(T), \BZ/e\BZ)/W\subset {\rm Hom}(X^*(T), \BR)/W_{\rm aff}.
\]
Now ${\rm Hom}(X^*(T), \BR)$ is the apartment corresponding to the torus $T\otimes_k F$ in the building of $H\otimes_k F$ over $F$. Hence 
we can use the above to identify $\RH^1(\Gamma, H(k))$ with a subset of a fixed alcove in that building.
In particular, each local type $\tau\in \RH^1(\Gamma, H(k))$ gives a well-defined point $[\tau]$ in the fixed alcove. The stabilizer of this point defines a parahoric group scheme for $H\otimes_k F$.
We claim that  this parahoric agrees up to conjugation by $H(F)$ with the parahoric group scheme   $^\tau\!\CG$  associated to the local type $\tau$. 

Recall the maximal torus $T\subset H$ over $k$; then $T\otimes_k F$ is a maximal torus of $G=H\otimes_k F$, and the hyperspecial point ${\bf a}\in \sB(G, F)$ that corresponds to $\CG(O)=H\otimes_k O$ is a base point $0$ of the apartment $A(T\otimes_k F)=X_*(T)\otimes_\BZ \BR$ for $T\otimes_k F$. Suppose the local type $\tau$ is given by a cocycle $\theta: \Gamma\to \CG'(O')=H(O')$. Denote by $\bar\theta: \Gamma \to H(k)$ the reduction of $\theta$ modulo $\varpi'$.
We can replace $\theta$ by a cohomologous cocycle $\theta'$ with the property that $\bar\theta'$ takes values in the $k$-points of $T$. Then, by lifting, we find $\theta'': \Gamma\to T(O')$ which is cohomologous to $\theta'$. Hence, replacing $\theta$ by $\theta''$, we may assume from the start that the local type is  given by a cocycle $\theta: \Gamma\to T(O')$. Let $h=t\in T(F')$ such that $\theta(\gamma)=t^{-1}\gamma(t)$ and  take
\[
^\tau\!\CG(O)=G(F)_{t\cdot {\bf a}}=G(F)\cap t H(O') t^{-1}.
\]
The point $t\cdot {\bf a}$ lies in the apartment $A(T\otimes_k F)=X_*(T)\otimes_\BZ \BR$ and it remains to
see that its $W_{\rm aff}$-orbit agrees with the $W_{\rm aff}$-orbit of the point $[\tau]$ above.

Let us make explicit  the isomorphism $\Gamma= \mu_e$. Choose uniformizers $\varpi\in O$ and $\varpi'\in O'$ such that $\varpi'^e=\varpi$. Then $\gamma(\varpi')=\zeta(\gamma)\varpi'$ for $\zeta(\gamma)\in\mu_e$, and the isomorphism is given by $\gamma\mapsto \zeta(\gamma)$.  Choose an isomorphism $T\simeq \BG_m^r$ and a primitive $e$-th root of unity $\zeta=\zeta_e\in k^*$ which also determines a generator $\gamma_\zeta\in \Gamma$. Suppose that $\bar\theta: \Gamma\to T(k)=(k^*)^r$ is given by $\bar\theta(\gamma_\zeta)=(\zeta^{a_1},\ldots, \zeta^{a_r})$, $0\leq a_i<e$. 
Then we can take $t=(\varpi'^{a_i},\ldots, \varpi'^{a_r})\in T(F')=(F'^\times)^r$; the corresponding translation element in $X_*(T)\otimes_\BZ\BR=\BR^r$ is $(a_1/e,\ldots, a_r/e)$ and $t\cdot {\bf a}=(a_1/e,\ldots, a_r/e)+0$. This has the same $W_{\rm aff}$-orbit as the point $[\tau]$ above.

\section{Global $(\CG',\Gamma)$-bundles and their local types}\label{s:locglob}

We now return to the global set-up. Let $X'/X$ be a tame Galois cover with Galois group $\Gamma$. Let $G$ be a reductive group over $K$ and $G'$ its base change over $K'$. Let $\CG$, resp. $\CG'$, be $\Gamma$-related stabilizer BT group schemes for $G$, resp. $G'$, corresponding  to the collection of points $\{{\bf a}_x\in\sB^e(G, K_{x})\mid x\in X\}$. Recall that then $\CG=\Res_{X'/X}(\CG')^\Gamma$, cf. Definition \ref{defrelated}.

\subsection{A Leray exact sequence} Recall from section \ref{ggamma} the stack-theoretic interpretation of $(\CG', \Gamma)$-bundles. 
It uses the quotient stack $\fkX:=[X'/\Gamma]$ which supports a natural  morphism 
\begin{equation}
q: \fkX=[X'/\Gamma]\to X .
\end{equation}
 Recall also the natural morphism
$\fkG:=[\CG'/\Gamma]\to \fkX=[X'/\Gamma]$ which is representable by a (relative) smooth affine group scheme. The groupoid of $(\CG',\Gamma)$-bundles over $X'$ 
is naturally equivalent to the groupoid of $\fkG$-bundles (i.e. $\fkG$-torsors) over $\fkX$, cf. loc.~cit.
We   now consider the corresponding set of isomorphism classes
\[
\RH^1(\fkX, \fkG)\cong \RH^1(X'; \Gamma, \CG').
\]
(Here we can use the \'etale topology.)  This is the global analogue of the set appearing in section \ref{s:localt}.  

\begin{proposition}\label{propLeray}
There is an exact sequence of pointed sets  
 \begin{equation}\label{eqleray1}
0\to \RH^1(X, \CG)\to \RH^1(X'; \Gamma, \CG' )\xrightarrow{\ {\rm LT}\ }  \prod_{x\in B}  \RH^1(I_{x'},  \bar\CG'^{\rm red}_{x'}(\bar k))^{{\rm Gal}(\bar k/k(x))}.
\end{equation}
Here, $B\subset X$ is the (finite) branch locus of $\pi: X'\to X$ and, for $x\in B$ a lift $x'$ of $x$ to $X'$ is chosen;  $\bar\CG'^{\rm red}_{x'}$ denotes the maximal reductive quotient of the special fiber of $\CG'$ over $x'$. Also,  $I_{x'}\subset \Gamma$ is the inertia subgroup. 
\end{proposition}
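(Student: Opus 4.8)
The plan is to verify exactness separately at each of the two nontrivial spots of \eqref{eqleray1}. The sequence itself is, up to language, the low-degree exact sequence attached to the morphism $q\colon\fkX=[X'/\Gamma]\to X$ of section~\ref{ggamma} and the sheaf of groups $\fkG=[\CG'/\Gamma]$, with $q_*\fkG=\Res_{X'/X}(\CG')^\Gamma=\CG$ and $R^1q_*\fkG$ a skyscraper sheaf on the finite set $B$, but it seems more transparent to argue directly.

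\emph{The two maps.} The first map is the functor $\CP\mapsto\CP'$ of Proposition~\ref{BTtors} (equivalently, pullback along $q$); since that functor is fully faithful it reflects isomorphisms, hence is injective on isomorphism classes, which is exactness at $\RH^1(X,\CG)$. The map ${\rm LT}$ sends a $(\CG',\Gamma)$-bundle $\CP'$ to the family, over $x\in B$, of the classes of the restrictions of $\CP'$ to the strict completions at the chosen points: fixing a geometric point $\bar x'$ of $X'$ over $x$ (whose stabilizer in $\Gamma$ is the inertia group $I_{x'}$, the residue field being $\bar k$), the $\CG'$-bundle $\CP'$ is trivial over $O_{\bar x'}$ by smoothness, so its restriction there is a $(\CG',I_{x'})$-bundle with class in $\RH^1(I_{x'},\CG'(O_{\bar x'}))$, and \eqref{redofcoho} of Remark~\ref{redofLT} (legitimate since $|I_{x'}|$ is prime to the residue characteristic, by tameness) identifies this with $\RH^1(I_{x'},\bar\CG'^{\rm red}_{x'}(\bar k))$. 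This class is $\Gal(\bar k/k(x))$-invariant, so ${\rm LT}(\CP')$ lies in the displayed product of invariants.

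\emph{Exactness at $\RH^1(X';\Gamma,\CG')$.} For ``image $\subseteq$ kernel'': if $\CP'$ is induced from a $\CG$-bundle $\CP$, then $\Res_{X'/X}(\CP')^\Gamma=\CP$ is a torsor, so it has a section over $\Spec(O_{\bar x})$ ($O_{\bar x}$ being strictly henselian and $\CG$ smooth affine); since $\CP\times_X\Spec(O_{\bar x})=\Res_{O_{\bar x'}/O_{\bar x}}(\CP'\times_{X'}\Spec(O_{\bar x'}))^{I_{x'}}$, this section is, by the adjunction defining restriction of scalars, an $I_{x'}$-invariant trivialization of the restriction of $\CP'$ to $\Spec(O_{\bar x'})$, so the local type of $\CP'$ at $x$ is the base point. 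For ``kernel $\subseteq$ image'': assume ${\rm LT}(\CP')$ is the base point and set $\CP:=\Res_{X'/X}(\CP')^\Gamma$. By the discussion preceding Theorem~\ref{thmpur}, $\CP$ is smooth over $X$ and is a $\CG$-pseudo-torsor; a smooth $\CG$-pseudo-torsor all of whose geometric fibres are non-empty is a $\CG$-torsor. Over $X\setminus B$ the geometric fibres are non-empty because $\pi$ is \'etale there; over $x\in B$, the geometric fibre at $\bar x$ is, after the same identification $\CP\times_X\Spec(O_{\bar x})=\Res_{O_{\bar x'}/O_{\bar x}}(\CP'\times_{X'}\Spec(O_{\bar x'}))^{I_{x'}}$, non-empty precisely because vanishing of the local type at $x$ means the restriction of $\CP'$ to $\Spec(O_{\bar x'})$ is a trivial $(\CG',I_{x'})$-bundle and hence carries an $I_{x'}$-invariant point. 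Thus $\CP$ is a $\CG$-torsor, and one checks, using the characterization of the associated $(\CG',\Gamma)$-bundle in the proof of Proposition~\ref{BTtors} (agreement over the preimage of $X\setminus B$ together with the canonical maps at the ramification points, equivalently the counit of the adjunction between $\pi^*$ and $\Res_{X'/X}$), that the $(\CG',\Gamma)$-bundle attached to $\CP$ is $\CP'$; hence $\CP'$ lies in the image.

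\emph{Main obstacle.} The delicate point is the branch-point bookkeeping over a possibly non-algebraically-closed $k$: one must justify the localization identity $\CP\times_X\Spec(O_{\bar x})=\Res_{O_{\bar x'}/O_{\bar x}}(\CP'\times_{X'}\Spec(O_{\bar x'}))^{I_{x'}}$ (in particular, that the stabilizer in $\Gamma$ of a geometric point $\bar x'$ over $\bar x$ is exactly $I_{x'}$ once the residue field is $\bar k$), and verify that triviality of ${\rm LT}(\CP')$ \emph{in the $\Gal(\bar k/k(x))$-invariants} is already witnessed by non-emptiness of the single geometric fibre $\CP\times_X\bar x$. Everything else is a routine unwinding of definitions combined with the local classification of section~\ref{s:localt} and the reduction isomorphism \eqref{redofcoho}; in the Leray formulation this same computation is what identifies the stalk of $R^1q_*\fkG$ and the edge map with ${\rm LT}$.
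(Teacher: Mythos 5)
Your proposal is correct. It proves the same statement by a genuinely more hands-on route than the paper: the paper obtains the exact sequence \eqref{eqleray1} as a black box from Giraud's non-abelian Leray-type sequence for $q\colon[X'/\Gamma]\to X$ (\cite[Chap.\ V, Prop.\ 3.1.3]{Giraud}), and then spends all of its effort identifying $R^0q_*\fkG=\CG$ and the stalks of $R^1q_*\fkG$ at the branch points (via the induced-algebra decomposition of $O_{X'}\otimes_{O_X}O^h_{X,x}(x')$, triviality of $\CG'$-torsors over strictly henselian bases, Artin approximation, and the tameness reduction \eqref{redofcoho}). You instead define the two maps directly and verify exactness at each spot by hand: injectivity from full faithfulness in Proposition~\ref{BTtors}, ``image $\subseteq$ kernel'' from the adjunction between pullback and $\Gamma$-invariant Weil restriction, and ``kernel $\subseteq$ image'' from the pseudo-torsor-plus-nonempty-fibres criterion that the paper itself uses in the proof of Theorem~\ref{thmpur}. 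The local ingredients you need (trivialization over $O_{\bar x'}$ by smoothness, the identification $\CP\times_X\Spec(O_{\bar x})=\Res_{O_{\bar x'}/O_{\bar x}}(\CP'\times_{X'}\Spec(O_{\bar x'}))^{I_{x'}}$ via Raynaud's decomposition, the reduction \eqref{redofcoho}, and the Galois-invariance of a class coming from a global object) are exactly the ones the paper assembles into the stalk computation, so nothing new is needed; what your version buys is self-containedness and transparency (one sees concretely why vanishing of the local type forces the fibre of $\Res_{X'/X}(\CP')^\Gamma$ over a branch point to be non-empty), at the cost of re-deriving by hand the special case of Giraud's exactness statement and of the extra check, which you correctly flag, that the $(\CG',\Gamma)$-bundle associated to $\CP=\Res_{X'/X}(\CP')^\Gamma$ via Proposition~\ref{BTtors} is again $\CP'$ (this follows from the counit of the adjunction and the two characterizing properties in that proof).
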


If $k$ is algebraically closed, then (\ref{eqleray1}) is an exact sequence of pointed sets
 \begin{equation}\label{eqleray4}
0\to \RH^1(X, \CG)\to \RH^1(X'; \Gamma, \CG' )\xrightarrow{\  {\rm LT}\ }   \prod_{x\in B}  \RH^1(\Gamma_{x'},  \bar\CG'^{\rm red}_{x'}(k)),
\end{equation}
where $\Gamma_{x'}=I_{x'}$ is the stabilizer of $x'\in X'$.
\begin{remark}
We note that an analogue (in his context) of this exact sequence appears in Grothendieck's Bourbaki talk (\cite{GrothBour}, display (II), p. 59). He also describes in terms of automorphism sheaves the fibers of the second map, comp. Proposition \ref{LTfiber} b) below. However, Grothendieck's expressions for the terms on the right in \eqref{eqleray4} and the automorphism sheaves is less precise.
\end{remark}

\begin{proof}
 By \cite[Chap. V, Prop. 3.1.3]{Giraud}, there is an exact sequence of pointed sets
\begin{equation}\label{eqleray3}
0\to\RH^1(X, R^0q_*\fkG)\to \RH^1(\fkX, \fkG)\to \RH^0(X, R^1q_*\fkG)
\end{equation}
where the derived images $R^iq_*$ are for $q=q_{\rm et}\colon \fkX_{\rm et}\to X_{\rm et}$. (This can be viewed as a non-abelian ``Leray type'' sequence, comp. \cite[\S 1]{Douai}). Hence, it remains to determine $R^iq_*(\fkG)$ for $i=0,1$.

Since ${\rm Res}_{X'/X}(\CG')(S)=\CG'(X'\times_XS)$, for any $X$-scheme $S$, we have
\[
R^0q_*(\fkG)=\RH^0(\Gamma, {\rm Res}_{X'/X}(\CG'))=({\rm Res}_{X'/X}(\CG'))^\Gamma=\CG,
\]
where we denote also by $\CG$ the \'etale group sheaf over $X$
corresponding to the  group scheme $\CG$.  

Let $x'$ and $x$ be points of $X'$ and $X$, with $\pi(x')=x$.
Denote by $\Gamma_{x'}\subset \Gamma$ the stabilizer (decomposition) subgroup of $x'$ and recall  the inertia subgroup $I_{x'}\subset \Gamma_{x'}$. 
Set $O^h_{X, x}(x')=(O^h_{X', x'})^{I_{x'}}$, so that $O^h_{X, x}(x')/O^h_{X, x}$ is an unramified Galois extension with group $\Gamma_{x'}/I_{x'}={\rm Gal}(k(x')/k(x))$.
Then there is a $\Gamma$-equivariant isomorphism of algebras, (\cite{Raynaud})
\[
O_{X'}\otimes_{O_X}O^{h}_{X, x}(x')\simeq {\rm Ind}_{I_{x'}}^\Gamma O^{h}_{X', x'} .
\]
This gives
\[
[X'/\Gamma]\times_X \Spec(O^{sh}_{X, x})\simeq   [\Spec(O^{sh}_{X', x'})/I_{x'}].
\]
Hence,   the stalk  of the \'etale sheaf
$R^1q_*\fkG$ at the geometric point $\bar x$ given by $k(x)\subset k(x')\subset \bar k=k(\bar x)$ is  the ${\rm Gal}(\bar k/k(x))$-set
\[
\RH^1(X'\times_X\Spec(O^{sh}_{X, x}); \Gamma, \CG')\simeq \RH^1(\Spec(O^{sh}_{X',x'}); I_{x'}, \CG').
\]
Since $\CG'$ is smooth,  all $\CG'$-torsors over $\Spec(O^{sh}_{X', x'})$ are trivial and by Proposition \ref{LTprop}
\[
\RH^1(\Spec(O^{sh}_{X',x'}); I_{x'}, \CG')\simeq \RH^1(I_{x'}, \CG'(O^{sh}_{X', x'})).
\]
In particular, if $x$ lies on the complement $U=X\setminus B$ of the branch locus $B$ of $X'\to X$, then  
$(R^1q_*\fkG)_{\bar x}=(0)$. Artin approximation implies that the injection $O^{sh}_{X', x'}\hookrightarrow O'_{\bar x'}$ induces a bijection
\begin{equation*}\label{Artin}
\RH^1(I_{x'}, \CG'(O^{sh}_{X', x'}))\simeq \RH^1(I_{x'}, \CG'(O'_{\bar x'})).
\end{equation*}
Recall that we assume that the  cover $X'\to X$ is tamely ramified, so all the inertia groups $I_{ x'}$ have order prime to the characteristic of $k$.  
Therefore, using that the kernel of $\CG'_{x'}\to   \bar\CG'^{\rm red}_{x'}$ is pro-unipotent, we obtain 
\[
\RH^1(I_{x'}, \CG'(O^{sh}_{X', x'}))\simeq \RH^1(I_{x'}, \CG'(O'_{\bar x'}))\simeq \RH^1(I_{x'}, \bar\CG'^{\rm red}_{x'}(\bar k)).
\] 
This  completes the proof of the proposition.
\end{proof}

\begin{definition}\label{localTypeDef}
Let $\CP'$ be a $(\CG',\Gamma)$-bundle over $X'$. The cohomology class 
\[
{\rm LT}_{x'}(\CP')\in \RH^1(I_{x'},  \bar\CG'^{\rm red}_{x'}(\bar k))= \RH^1(I_{x'}, \CG'(O'_{\bar x'}))
\]
 is called  the \emph{local type}
of $\CP'$ at $x'$. 
\end{definition}

 Note that if $x'_i$, $i=1$, $2$, are two points of $X'$ both mapping to $x$, there is $\gamma\in \Gamma$ with $\gamma(x'_1)=x'_2$ and we can use  a lift of $\gamma$ to identify $\RH^1(I_{x'_1},  \bar\CG'^{\rm red}_{x'_1}(\bar k))\simeq \RH^1(I_{x'_2},  \bar\CG'^{\rm red}_{x'_2}(\bar k))$. The maps ${\rm LT}_{x'_1}$ and ${\rm LT}_{x'_2}$ are compatible with such an identification. We will often abuse notation and write ${\rm LT}_{x}$ instead of ${\rm LT}_{x'}$. 

\begin{remark}\label{remLTfinite}
If $k$ is a finite field and the fiber of $\CG'$ at $x'$ is connected, then every $\CG'$-torsor is trivial over $\Spec(O^h_{X', x'})$ by the smoothness of $\CG'$ and Lang's theorem. Under these assumptions, a similar construction using restriction to $\Spec(O^h_{X', x'})$ allows us to define the local type of a $(\CG',\Gamma)$-bundle $\CP'$ over $X'$ at $x'$ as an element in $\RH^1(I_{x'}, \bar\CG'^{\rm red}_{x'}(k(x')))$. This maps to the local type ${\rm LT}_{x'}(\CP')\in \RH^1(I_{x'},  \bar\CG'^{\rm red}_{x'}(\bar k))$  given above.
\end{remark}

\subsection{Bundles with fixed local type (algebraically closed ground field).}\label{fixedalclosed}
Let us now assume that $k$ is algebraically closed.  
Under these assumptions, we will study
the fibers of the map 
\begin{equation}\label{globalLT}
{\rm LT}: \RH^1([X'/\Gamma], [\CG'/\Gamma])\to \prod_{x\in B}  \RH^1(\Gamma_{x'},  \CG'(O'_{x'}))=\prod_{x\in B}\RH^1(\Gamma_{x'}, \bar\CG'^{\rm red}_{x'}(k)) .
\end{equation}

\begin{proposition}\label{LTfiber}
a) The map \eqref{globalLT} is surjective.  

\smallskip

b) Let $\tau\in \prod_{x\in B}\RH^1(\Gamma_{x'}, \bar\CG'^{\rm red}_{x'}(k))$. Fix a $(\CG', \Gamma)$-bundle $\CP'$ with $LT(\CP')=\tau$, and let $\CG^\flat$ be the corresponding  ``twist'' of $\CG$ so that there 
 is a bijection between the set of isomorphism classes of $(\CG',\Gamma)$-bundles over $X'$ with local type $\tau$ and the set of isomorphism classes of $\CG^\flat$-bundles over $X$, i.e., there is a bijection
 \[
{\rm LT}^{-1}(\tau)\simeq \RH^1(X, \CG^\flat) ,
\]
cf. \cite[ Ch. V, 3.1.4, 3.1.6]{Giraud}. 

Then $\CG^\flat$ is a stabilizer  BT group scheme $^\tau\!\CG$ for $G$  such that for $x\notin  B$, there is an isomorphism $^\tau\!\CG_x\simeq\CG_x$ and such that, for $x\in B$, denoting the component of $\tau$ at $x$ by $\tau_x$, the group scheme 
$^\tau\!\CG_x$ is isomorphic to the twisted form $^{\tau_x}\CG_x$ of $\CG_x$ (cf. Definition \ref{LTgroup} for the definition of this BT group scheme over $\Spec(O_x)$). 
\end{proposition}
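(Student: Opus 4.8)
The plan is to prove both parts by a local-to-global analysis, using the Leray sequence \eqref{eqleray3} that already produced Proposition \ref{propLeray}, together with the local classification results of section \ref{s:localt}. For part a), surjectivity of \eqref{globalLT}, the strategy is to produce, for each $\tau=(\tau_x)_{x\in B}$, an explicit $(\CG',\Gamma)$-bundle $\CP'$ with $\mathrm{LT}(\CP')=\tau$. First I would construct $\CP'$ locally: over each $\Spec(O'_{x'})$ for $x\in B$, choose a cocycle $\theta_x\colon\Gamma_{x'}\to\CG'(O'_{x'})$ representing $\tau_x$ (possible by Proposition \ref{LTprop} and the bijection \eqref{redofcoho}), which defines a $(\CG',\Gamma)$-bundle on the formal/henselian neighbourhood; away from $B$, take the trivial $(\CG',\Gamma)$-bundle, i.e. the pullback of the trivial $\CG$-bundle. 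These glue, by Beauville--Laszlo along the finitely many points of $R'=\pi^{-1}(B)$ (equivariantly for $\Gamma$, using the $\Gamma$-equivariant decomposition of $O_{X'}\otimes_{O_X}O^h_{X,x}(x')$ as an induced module already invoked in the proof of Proposition \ref{propLeray}), to a global $(\CG',\Gamma)$-bundle $\CP'$ on $X'$. By construction its stalk of $R^1q_*\fkG$ at $x$ is exactly $\tau_x$, so $\mathrm{LT}(\CP')=\tau$. This is essentially a patching argument and I expect it to be routine.

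For part b), the key input is Giraud's theory of twisting a gerbe/torsor problem by a fixed object, \cite[Ch.~V, 3.1.4, 3.1.6]{Giraud}: fixing $\CP'$ with $\mathrm{LT}(\CP')=\tau$, the fiber $\mathrm{LT}^{-1}(\tau)$ is in bijection with $\RH^1(X,\CG^\flat)$ where $\CG^\flat=\underline{\Aut}_{\fkG}(\CP')$ is the automorphism sheaf of $\CP'$ as a $\fkG$-torsor on $\fkX$, pushed down by $q$; concretely $\CG^\flat=R^0q_*\underline{\Aut}(\CP')$, an inner twist of $\CG=R^0q_*\fkG$ by the class of $\CP'$. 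The substance of the proof is then the \emph{identification} of this twist $\CG^\flat$ with the stabilizer BT group scheme $^\tau\!\CG$ of Definition \ref{LTgroup}. Here I would argue point by point. Away from $B$, $\CP'$ is (locally) the trivial $(\CG',\Gamma)$-bundle, so its automorphism sheaf is $\CG$ itself and $^\tau\!\CG_x\simeq\CG_x$. At $x\in B$, passing to $\Spec(O_x)$ and $\Spec(O'_{x'})$, the bundle $\CP'$ is given by the local cocycle $\theta_x$, and its automorphism group scheme over $O_x$ is exactly $(\Res_{O'_{x'}/O_x}{}^{\theta_x}\!\CG'_{x'})^{\Gamma_{x'}}$ with $\Gamma_{x'}$ acting through the Galois action twisted by $\theta_x$ — this is precisely the description ``$^\theta\CG$'' in the Remark following Definition \ref{LTgroup}, which is shown there (via $g\mapsto hgh^{-1}$ for $h$ with $\theta_x(\gamma)=h^{-1}\gamma(h)$) to be isomorphic to $^{\tau_x}\!\CG_x$, the stabilizer of $h\cdot{\bf a}_x$ in $G(K_x)$. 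To conclude that the local pieces glue to a \emph{stabilizer} BT group scheme $^\tau\!\CG$ over all of $X$, I would invoke the Beauville--Laszlo construction of section \ref{ss:BTcov}: the data is precisely a collection of points $\{h_x\cdot{\bf a}_x\}$ (with $h_x=1$ for $x\notin B$), and by Remark \ref{stabBT} together with Edixhoven's lemma this glues to a smooth stabilizer BT group scheme for $G$ over $X$, which is $^\tau\!\CG$ by construction.

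The main obstacle I anticipate is making the identification $\CG^\flat\simeq{}^\tau\!\CG$ genuinely compatible across the gluing, i.e. checking that the abstractly-defined twist $R^0q_*\underline{\Aut}(\CP')$ — which a priori is only an fppf form of $\CG$ — really is the stabilizer scheme attached to the globally-coherent collection of shifted points, rather than some nearby but non-isomorphic BT group scheme. Concretely this amounts to: (i) verifying that $\underline{\Aut}(\CP')$ as a group scheme over $\fkX$ is $[{}^{\theta}\!\CG'/\Gamma]$ for a global twist $\theta$ of the $\Gamma$-action on $\CG'$ (here one must be careful that $\theta$ need not be globally a cocycle valued in $\CG'(X')$, but the gerbe-theoretic formalism of Giraud handles exactly this); and (ii) that taking $\Gamma$-invariants of restriction of scalars commutes with the relevant base changes — this is \cite[(1.7.6)]{BTII} plus Edixhoven's lemma, exactly as in section \ref{ss:BTcov}. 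Everything else (surjectivity, the $x\notin B$ case, finiteness) follows formally from Proposition \ref{propLeray}, Proposition \ref{LTprop2}, and the Remark after Definition \ref{LTgroup}.
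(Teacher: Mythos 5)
Your overall route for both parts coincides with the paper's (part a) by an explicit Beauville--Laszlo construction from the cocycles $\theta_x$, part b) by Giraud's twisting formalism plus a local computation identifying the twist with $^{\tau_x}\CG_x$), but there is a genuine gap, and it sits exactly where you flag your ``main obstacle''. In b) you compute $\CG^\flat$ only over each completed local ring and then assert that these local pieces glue to the stabilizer scheme $^\tau\!\CG$. Nothing in your argument identifies the generic fiber of $\CG^\flat=(\Res_{X'/X}\Aut(\CP'))^\Gamma$ with $G$, nor makes the local identifications at the various points compatible with one another: the local trivializations of $\CP'$ at different $x$ are a priori unrelated, and (as the paper remarks right after the proof) a BT group scheme over $X$ is not determined by its collection of completed localizations, so a purely local argument cannot show that $\CG^\flat$ is a stabilizer BT group scheme \emph{for $G$} attached to a coherent family of points $h_{x'}\cdot{\bf a}_x$. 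Your proposed remedies (i)--(ii) (gerbe formalism, compatibility of $\Gamma$-invariants of Weil restriction with base change) do not supply this missing global input. The paper's resolution is a \emph{global} application of Steinberg's theorem: since $k$ is algebraically closed, $\CP'$ is generically trivial as a $(\CG',\Gamma)$-bundle, so one chooses an affine open $U_{\CP'}\subset X$ and a trivialization $a_{U_{\CP'}}$ of $\CP'$ over $\pi^{-1}(U_{\CP'})$; this identifies $\CG^\flat\times_X U_{\CP'}\simeq\CG\times_X U_{\CP'}$ (in particular the generic fiber with $G$), and comparing $a_{U_{\CP'}}$ with local trivializations $a_{x'}$ over $\Spec(O'_{x'})$ produces elements $h_{x'}\in G'(K'_{x'})$ with $\CP'(O'_{x'})=\CG'(O'_{x'})\cdot h_{x'}^{-1}$, whence $\CG^\flat(O_x)=\big(h_{x'}\CG'(O'_{x'})h_{x'}^{-1}\big)^{\Gamma_{x'}}={}^{\tau_x}\CG(O_x)$, compatibly with the single identification over $U_{\CP'}$. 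This one generic trivialization is what makes all the local twists coherent, and it is absent from your argument.

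There is also a smaller omission in a): the local $(\CG',\Gamma)$-bundle determined by $\theta_x$ and the trivial bundle over $U'$ are \emph{not} isomorphic over $\Spec(O'_{x'})$ (unless $\tau_x$ is trivial), so Beauville--Laszlo gluing requires an explicit isomorphism over $\Spec(K'_{x'})$; this is provided by writing $\theta_x(\gamma)=h_{x'}^{-1}\gamma(h_{x'})$ with $h_{x'}\in G'(K'_{x'})$ (local Steinberg, as in the proof of Proposition \ref{LTprop}), and $\Gamma$-equivariance across the whole fiber over $x$ is arranged by setting $h_{\gamma_i(x')}=\gamma_i(h_{x'})$. Your phrase ``these glue'' conceals precisely this step, which is the actual content of the paper's construction of a bundle with prescribed local type.
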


\begin{proof} 
a)  Starting from $\tau=(\tau_{x})_{x\in B}$, we will create a $\CG'$-torsor over $X'$ by glueing \`a la Beauville-Laszlo the trivial $\CG'$-torsors over $U'=\pi^{-1}(U)$ and over $\sqcup_{x'\in X'\setminus U' }\Spec( O'_{x'})$ by using inner automorphisms given by elements $h_{x'}\in G'(K'_{x'})$. For each $x\in B$ choose a preimage $x'$, and represent the class $\tau_x$ by a cocycle $\theta\colon \Gamma\to \CG'(O'_{x'})$.  Then by section \ref{ss:types}, we can write $\theta(\gamma)=h^{-1}\gamma(h)$, for some $h=h_{x'}\in G'(K'_{x'})$.   Write  $\{x'=x'_1, x'_2,\ldots, x'_f\}$ for the $f=[\Gamma: \Gamma_{x'}]$ points on $X'$ above $x$ and choose $\gamma_i\in \Gamma$ such that 
$x'_i=\gamma_i(x')$. Then   set $h_{x'_i}=\gamma_i (h_{x'})$. The condition that $h^{-1}_{x'}\gamma (h_{x'})\in \CG'( O'_{x'})$, for all $\gamma\in \Gamma_{x'}$, implies that the  $\CG'$-torsor over $X'$ defined by gluing via $\{h_{x'}\mid x'\in X'\setminus U'\}$ acquires a compatible $\Gamma$-structure and so it is a $(\CG', \Gamma)$-torsor. It has the local type $\tau$ by construction.

b) Fix a  $(\CG', \Gamma)$-bundle $\CP'$
whose isomorphism class lies in ${\rm LT}^{-1}(\tau)$. By \cite[Ch. V, 3.1.4, 3.1.6]{Giraud}, ${\rm LT}^{-1}(\tau)$ is in bijection with the set of isomorphism classes of $\CG^\flat$-bundles
over $X$, where $\CG^\flat$ is the twist 
\[
\CG^\flat:=R^0q_*({\rm Aut}(\CP'))=({\rm Res}_{X'/X}{\rm Aut}(\CP'))^\Gamma.
\]
 We now prove that $\CG^\flat$ has the asserted properties. By Steinberg's theorem, the $(\CG',\Gamma)$-bundle $\CP'$
is generically trivial. Hence there exists an affine open subset $U_{\CP'}\subset X$ such that the restriction of $\CP'$ to $\pi^{-1}(U_{\CP'})$ is trivial. 
 Choose a trivialization, i.e. a $(\CG',\Gamma)$-isomorphism 
\[
a_{U_{\CP'}}: \CP'\times_{X'}\pi^{-1}(U_{\CP'})\simeq \CG'\times_{X'}\pi^{-1}(U_{\CP'})
\]
 which also gives $\CG^\flat\times_X U_{\CP'}\simeq \CG\times_XU_{\CP'}$. Let $x'\in X'$ with $\pi(x')=x$ and choose 
a section 
\[
a_{x'}: \CP'\times_{X'} \Spec(O'_{x'})\simeq \CG'\times_{X'}\Spec(O'_{x'}),
\]
and set $h_{x'}=({a_{U_{\CP'}}}_{|\Spec K'_{x'}}\cdot {a_{x'}}_{|\Spec K'_{x'}}^{-1})(1)\in G'(K'_{x'})$. Then we can identify
\[
\CP'  (O'_{x'})=\CG' (O'_{x'})\cdot h^{-1}_{x'}\subset G'(K'_{x'}).
\]
For $\gamma\in \Gamma_{x'}$, we have $h_{x'}^{-1}\gamma(h_{x'})\in \CG'(O'_{x'})$; the local type of $\CP'$ at $x'$ is given by the class of the 
$1$-cocycle  $\gamma\mapsto h_{x'}^{-1}\gamma(h_{x'})$.  If $x\in B$, then, since the local type of $\CP'$ at $x'$ is $\tau_x$, the map $\gamma\mapsto h_{x'}^{-1}\gamma(h_{x'})$ gives a $1$-cocycle in the cohomology class of $\tau_x$. Similarly, if $x\notin B$, then, since the local type of $\CP'$ at $x'$ is trivial, the map $\gamma\mapsto h_{x'}^{-1}\gamma(h_{x'})$ gives the trivial   cohomology class.
We can now check that the completions of $\CG^\flat$ and $^\tau\!\CG$ agree at all points, i.e.  that there  are group scheme isomorphisms $\CG^\flat_x\simeq\ ^{\tau_x}\CG_x$, for all $x\in X$, which are compatible with $\CG^\flat\times_XU_{\CP'}\simeq \CG\times_XU_{\CP'}=\, ^\tau\!\CG\times_XU_{\CP'}$. Let $x\in B$. It is enough to check agreement of the point sets $\CG^\flat(O_{ x})$ with  $^{\tau_x}\CG_x( O_{ x})$ under our identifications. On the one hand
\[
\CG^\flat (O_x)={\rm Aut}(\CP'(O_x))^\Gamma={\rm Aut}(\CP'(O'_{x'}))^{\Gamma_{x'}}={\rm Aut}(\CG'(O'_{x'})\cdot h^{-1}_{x'})^{\Gamma_{x'}}.
\]
On the other hand, $^{\tau_x}\CG_x(O_x)$ is the stabilizer of $h_{x'}\cdot {\bf a_x}$ in $G(K_x)$, cf. Definition \ref{LTgroup}. In particular, we have
\begin{equation}\label{twistInv}
^{\tau_x}\CG(O_x)=\big(h_{x'}\CG'(O'_{x'})h_{x'}^{-1}\big)^{\Gamma_{x'}},
\end{equation}
which proves the claim.  For $x\notin B$ a similar argument applies. 
\end{proof}

\begin{remark}\label{extraInfo}
In part (b) and when $B\neq \emptyset$, the proof actually gives the additional information that the restrictions of $\CG^\flat$ and $\CG$ over $U=X\setminus B$ are isomorphic group schemes. This is because the $\CG_U$-bundle $({\rm Res}_{X'/X}(\CP')_{|U})^\Gamma$ is trivial by \cite[Thm 4]{Hein}, so we can take $U_{\CP'}=U$ in the argument. If $B=\emptyset$, i.e. the cover $X'\to X$ is unramified, then $\CG^\flat={\rm Aut}(\CP)$, the strong inner form of $\CG$ defined by the $\CG$-torsor $\CP=({\rm Res}_{X'/X}(\CP'))^\Gamma$.
\end{remark}

\begin{remark}
It should be pointed out that the notation $^\tau\!\CG$ is somewhat misleading, since the group scheme $\CG^\flat$ depends on $\CP'$. Relatedly, the group scheme $^\tau\!\CG$ does not seem to be determined (up to isomorphism) by its above description through its localizations at all points $x\in X$. 
\end{remark}

\begin{remark}\label{remHar}
Let us collect  here some facts on the triviality of $\CG$-bundles. 

\begin{altenumerate}
\item First, there is Heinloth's result \cite[Thm 4]{Hein}: it states for a parahoric BT group scheme $\CG$ on $X$ for a simply connected group $G$, that an $S$-family of  $\CG$-bundles on $X$  becomes trivial on $(X\setminus \{x\})\times S'$ after an \'etale extension $S'\to S$. If $G$ is semi-simple but not necessarily simply connected, one has to allow an fppf-base change $S'\to S$ ( \cite[Thm 5]{Hein}).

\item  For the next result, let us introduce the following terminology, cf. \cite{HInv}.   Let $\CG$ be a reductive group scheme over a Dedekind ring $A$ with semi-simple simply connected generic fiber. A $\CG$-bundle on  $\Spec(A)$ is \emph{rationally quasi-trivial} if the associated strong inner form of $\CG$  over the fraction field is quasi-split. Then a rationally quasi-trivial $\CG$-bundle is trivial, cf. \cite[Satz 3.3]{HInv}.
 
\item Let $A$ be the affine ring of a proper open subset $U$ of a curve $X$ over a field $k$.  If $k$ is algebraically closed, then by Steinberg's theorem the hypothesis of rational quasi-triviality in Harder's result  above is automatic.  When $k$ is finite, this is not true and the triviality of $\CG$-bundles over $\Spec(A)$ in ii) can fail if $G\otimes_K K_x$  is anisotropic for all $x\in X\setminus U$.

\item Let $X$ be a curve over an algebraically closed field $k$. Let $\CG$ be a parahoric BT group scheme over $X$ for a semi-simple simply connected group $G$ over $k(X)$. Let $U$ be an open subset such that $\CG_{|U}$ is a reductive group scheme over $U$. Then a $\CG$-torsor over $U$ extends to a $\CG$-torsor over $X$. Indeed, the $\CG$-torsor on $U$ is trivial. The  analogous statement also holds when $k$ is finite, even though the $\CG$-torsor on $U$ may not be trivial. Indeed, this follows by  Beauville-Laszlo glueing at the missing points $x\in X\setminus U$, using  $\RH^1(K_x, G)=0$.
\end{altenumerate}
\end{remark}

\subsection{Bundles with fixed local type (finite ground field).}\label{fixedfinite}

In this subsection we assume that the ground field $k$ is finite, and indicate a variant of the previous subsection. Recall from Remark \ref{remLTfinite} the definition of the local type ${\rm LT}_{x'}(\CP')\in \RH^1(I_{x'}, \bar\CG'^{\rm red}_{x'}(k(x')))$ of the  $(\CG', \Gamma)$-bundle $\CP'$. We obtain the map analogous to \eqref{globalLT},
\begin{equation}\label{globalLTfinite}
{\rm LT}: \RH^1([X'/\Gamma], [\CG'/\Gamma])\to\prod_{x\in B}\RH^1(I_{x'}, \bar\CG'^{\rm red}_{x'}(k(x'))) .
\end{equation}
 The analogue of Proposition \ref{LTfiber} is the following statement.
 
\begin{proposition}\label{LTfiberfinite}
a) The map \eqref{globalLTfinite} is surjective.  

\smallskip

b) Let $\tau\in \prod_{x\in B}\RH^1(I_{x'}, \bar\CG'^{\rm red}_{x'}(k(x')))$. Fix a $(\CG', \Gamma)$-bundle $\CP'$ with $LT(\CP')=\tau$, and let $\CG^\flat$ be the corresponding  strong inner form of $\CG$ so that there 
 is a bijection between the set of isomorphism classes of $(\CG',\Gamma)$-bundles over $X'$ with local type $\tau$ and the set of isomorphism classes of $\CG^\flat$-bundles over $X$, i.e., there is a bijection
 \[
{\rm LT}^{-1}(\tau)\simeq \RH^1(X, \CG^\flat) ,
\]
cf. \cite[ Ch. V, 3.1.4, 3.1.6]{Giraud}. 

Assume that $G$ and $G'$ are semi-simple and simply connected. Then $\CG^\flat$ is a parahoric  BT group scheme $^\tau\!\CG$ such that for $x\notin  B$, there is an isomorphism $^\tau\!\CG_x\simeq\CG_x$ and such that, for $x\in B$, denoting the component of $\tau$ at $x$ by $\tau_x$, the group scheme 
$^\tau\!\CG_x$ is isomorphic to the twisted form $^{\tau_x}\CG_x$ of $\CG_x$. 
\end{proposition}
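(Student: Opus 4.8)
The plan is to run the proof of Proposition \ref{LTfiber} essentially verbatim, substituting at each appeal to Steinberg's theorem (and to Heinloth's generic triviality over $U=X\setminus B$) its finite-ground-field replacement. The first input that changes is \emph{generic triviality} of a $(\CG',\Gamma)$-bundle $\CP'$: by the stack description of \S\ref{ggamma}, and since $\Spec(K')\to\Spec(K)$ is finite \'etale Galois with group $\Gamma$, the restriction of $\CP'$ to the generic point is a $G$-torsor over $\Spec(K)$ with $G=(\Res_{K'/K}G')^\Gamma$; as $G$ is semi-simple simply connected and $K=k(X)$ is a global function field, $\RH^1(K,G)=1$ by Harder's theorem (with Chernousov's contribution in type $E_8$), so this torsor is trivial and $\CP'$ is trivial, as a $(\CG',\Gamma)$-bundle, over $\pi^{-1}(U_{\CP'})$ for some affine open $U_{\CP'}\subseteq X$. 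Unlike the situation of Remark \ref{extraInfo}, here one cannot in general take $U_{\CP'}=X\setminus B$, since over a finite field the $\CG_U$-torsor $(\Res_{X'/X}\CP'|_U)^\Gamma$ need not be trivial (cf. Remark \ref{remHar}(iii)); this is the source of the phrase ``strong inner form'' in the statement.

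The second input is the package of \emph{local} triviality and cocycle statements over the henselian (or complete) local rings at points of $X$. Since $G'$ is semi-simple simply connected, $\CG'$ has connected special fibres, so Remark \ref{remfinitevar} furnishes, over the complete local rings $O'_{x'}$ (which now have finite residue field), the exact analogue of Proposition \ref{LTprop}: every $\CG'$-torsor there is trivial by smoothness and Lang's theorem, and $\RH^1(I_{x'},G'(K'_{x'}))=1$ by Bruhat--Tits \cite[Thm. 4.7]{BTIII}; by Artin approximation the same holds over the henselizations $O^h_{X',x'}$. Together with the tameness of $X'/X$ and the pro-unipotence of $\ker(\CG'_{x'}\to\bar\CG'^{\rm red}_{x'})$ (whose order is a power of $p$, hence invisible to the prime-to-$p$ group $I_{x'}$), this yields the finite-residue-field versions of Propositions \ref{LTprop}--\ref{LTprop2} and of Definition \ref{LTgroup}, so that for $x\in B$ the twisted group scheme $^{\tau_x}\CG_x$ over $\Spec(O_x)$ is defined, and $^{\tau_x}\CG_x\simeq\CG_x$ for $x\notin B$, where $\tau_x$ is the trivial class.

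With these replacements in hand, part a) is proved as in Proposition \ref{LTfiber} a): starting from $\tau=(\tau_x)_{x\in B}$, for each $x\in B$ choose $x'$ over $x$, represent $\tau_x$ by a cocycle $\theta\colon I_{x'}\to\CG'(O'_{x'})$, and use the finite-residue-field form of Proposition \ref{LTprop} applied to $K'_{x'}/K_x$ to produce $h_{x'}\in G'(K'_{x'})$ with $h_{x'}^{-1}\gamma(h_{x'})\in\CG'(O'_{x'})$ for all $\gamma$ in the decomposition group $\Gamma_{x'}$ and with $(\gamma\mapsto h_{x'}^{-1}\gamma(h_{x'}))|_{I_{x'}}$ in the class of $\theta$; set $h_{x'_i}=\gamma_i(h_{x'})$ over the $\Gamma$-orbit of $x'$ and glue, \`a la Beauville--Laszlo, the trivial $\CG'$-torsors over $\pi^{-1}(U)$ and over $\bigsqcup_{x'\notin\pi^{-1}(U)}\Spec(O'_{x'})$ along these elements; $\Gamma_{x'}$-invariance makes the result a $(\CG',\Gamma)$-bundle, of local type $\tau$ by construction. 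For part b), put $\CG^\flat:=R^0q_*(\mathrm{Aut}(\CP'))=(\Res_{X'/X}\mathrm{Aut}(\CP'))^\Gamma$, with $\mathrm{LT}^{-1}(\tau)\simeq\RH^1(X,\CG^\flat)$ by \cite[Ch. V, 3.1.4, 3.1.6]{Giraud}; choosing a $(\CG',\Gamma)$-trivialization $a_{U_{\CP'}}$ over $\pi^{-1}(U_{\CP'})$ and local sections $a_{x'}$, and setting $h_{x'}=(a_{U_{\CP'}}\cdot a_{x'}^{-1})(1)\in G'(K'_{x'})$ exactly as in the proof of Proposition \ref{LTfiber} b), one computes for every $x$ that $\CG^\flat(O_x)=\mathrm{Aut}(\CG'(O'_{x'})\cdot h_{x'}^{-1})^{\Gamma_{x'}}=(h_{x'}\CG'(O'_{x'})h_{x'}^{-1})^{\Gamma_{x'}}$, which is $^{\tau_x}\CG_x(O_x)$ for $x\in B$ and $\CG_x(O_x)$ for $x\notin B$; since $G$ is semi-simple simply connected each localization is parahoric, so $\CG^\flat$ is a parahoric BT group scheme $^\tau\!\CG$ with the asserted localizations, and over $U=X\setminus B$ it is the strong inner form $\mathrm{Aut}((\Res_{X'/X}\CP'|_U)^\Gamma)$ of $\CG$.

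The main obstacle is the first step: over an algebraically closed field generic triviality was a soft consequence of Steinberg's theorem, whereas here it rests on the Hasse-principle/vanishing theorem of Harder and Chernousov for simply connected groups over global function fields — which is precisely why the semi-simple simply connected hypothesis is built into the statement. A secondary technical point, arising in part a), is to realize a prescribed $I_{x'}$-cohomology class by gluing data invariant under the full decomposition group $\Gamma_{x'}$ (which is strictly larger than $I_{x'}$ when $k(x)$ is not algebraically closed); this is handled by the finite-residue-field form of Proposition \ref{LTprop} applied to $K'_{x'}/K_x$, combined with the vanishing of the relevant cohomology of the pro-unipotent radical, in place of the strictly henselian argument used in the algebraically closed case.
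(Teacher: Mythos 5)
Your proposal is correct and follows essentially the same route as the paper, whose proof consists precisely of rerunning the argument of Proposition \ref{LTfiber} with the local Steinberg theorem replaced by Remark \ref{remfinitevar} (Lang's theorem plus the Bruhat--Tits vanishing) and the global Steinberg theorem replaced by Harder's theorem that $\RH^1(K,G)=0$ for semi-simple simply connected $G$ over a global function field. Your write-up merely spells out the details more explicitly (and, as a minor aside, Chernousov's $E_8$ contribution is only needed over number fields, not here).
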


\begin{proof} Here the definition of  $^{\tau_x}\CG_x$  is analogous to Definition \ref{LTgroup}: In Definition \ref{LTgroup}, Steinberg's theorem (in the local situation) enters, which allows us to trivialize the cocycle $\theta:\Gamma\to \CG'(O'_{x'})$ corresponding to $\tau_x$, and construct the element $h$ entering in the definition of $^{\tau_x}\CG_x$ in loc.~cit. Here, instead of Steinberg's local theorem, we use Remark \ref{remfinitevar}. In fact, replacing  in the arguments in the proof of Proposition \ref{globalLT}, the local Steinberg theorem by  Remark \ref{remfinitevar}, and the global Steinberg theorem by Harder's theorem \cite{HCrel} that $\RH^1(K, G)=0$ when $G$ is semi-simple simply connected, the proof goes through with no changes.  
\end{proof} 

 \begin{remark} 
Note that Remark \ref{extraInfo} does not extend to the case when $k$ is a finite field: if $B\neq \emptyset$, we do not know that 
$\CG^\flat_U\simeq \CG_U$  in general.  This is because of the possible existence of non-trivial $\CG$-torsors over $U$, see Remark \ref{remHar} (iii).
\end{remark}

\section{Moduli of $\CG$-bundles and of $(\CG', \Gamma)$-bundles}\label{s:moduli}

 We denote by $\Bun_\CG$ the stack of $\CG$-bundles on $X$. For any smooth affine group scheme $\CG$ over $X$, this is an algebraic stack which is smooth over $\Spec(k)$, cf.  \cite[Prop. 1]{Hein}. 

In the context of Definition \ref{GGammadef}, we also have the stack $ \Bun_{(\CG', \Gamma)}$ of $(\CG', \Gamma)$-bundles on $X'$.  Note that the extra datum that defines a $(\CG',\Gamma)$-structure on a $\CG'$-bundle $\CP'$ consists of $\CG'$-automorphisms $\phi_\gamma: \gamma^*(\CP')\xrightarrow{\sim} \CP'$ between  $\CP'$ and its pull-backs $\gamma^*(\CP')$, for all $\gamma\in \Gamma$, which satisfy certain compatibilities. From the representability of $\Bun_{\CG'}$, it now follows with standard arguments that $ \Bun_{(\CG', \Gamma)}$ also defines an algebraic stack.

\begin{theorem}\label{thmlocconst}
Let $k$ be algebraically closed. Let $X'/X$ be a finite tame Galois covering with Galois group $\Gamma$. Let $\CG$ and $\CG'$ be $\Gamma$-related  stabilizer Bruhat-Tits group schemes over $X$, resp. $X'$.

\noindent a) The local type map
\[
{\rm LT}\colon  \Bun_{(\CG', \Gamma)}(k)\to \prod_{x\in B} \RH^1({\Gamma_{x'}}, \CG'( O'_{x'}))
\] is locally constant.

\noindent b) For each element $\tau\in   \prod_{x\in B} \RH^1({\Gamma_{x'}}, \CG'(O'_{x'}))$, let $(\Bun_{(\CG', \Gamma)})_\tau$ be the open and closed substack where the value of ${\rm LT}$ equals $\tau$. Fix a point in $(\Bun_{(\CG', \Gamma)})_\tau(k)$. Then there is a canonical isomorphism of algebraic stacks preserving the base points
\[
(\Bun_{(\CG', \Gamma)})_\tau\simeq \Bun_{\, ^\tau\!\CG} .
\]
Here $^\tau\!\CG$ is the stabilizer BT group scheme on $X$ from Proposition \ref{LTfiber}.
\end{theorem}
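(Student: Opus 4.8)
The plan is to prove both parts simultaneously by exhibiting the local type as a map that factors through a locally constant invariant, and then to upgrade the set-theoretic bijection of Proposition \ref{LTfiber} b) to an isomorphism of stacks. For part a), I would first recall that $\Bun_{(\CG',\Gamma)}$ is an algebraic stack of finite type locally on the base (or at least that its connected components are open and closed), so it suffices to show that for any connected scheme $S$ and any $S$-family $\CP'$ of $(\CG',\Gamma)$-bundles, the function $s\mapsto {\rm LT}(\CP'_s)$ is constant on $S(k)$. Here the key input is the purity theorem, Theorem \ref{thmpur}: the locus in $S$ where $({\rm Res}_{X'/X}(\CP')^\Gamma)$ fails to be a $\CG$-torsor is, by the argument there, \emph{both} open (smoothness) and — this is the new content — its complement cannot be a proper dense open subset unless it is everything. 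More precisely, I would argue that the local type at $x$ jumps only along a closed subset of $S$, and then use purity to exclude the possibility that this closed subset is a proper nonempty subset of a connected $S$: if $\CP'$ has a given local type $\tau$ on a dense open $U\subset S$ then, twisting $\CG$ by $\tau$ via Proposition \ref{LTfiber} b) and applying Theorem \ref{thmpur} to the family ${\rm Res}_{X'/X}(\CP')^\Gamma$ relative to $^\tau\!\CG$ (rather than $\CG$), we conclude that ${\rm Res}_{X'/X}(\CP')^\Gamma$ is an $S$-family of $^\tau\!\CG$-bundles, forcing the local type to equal $\tau$ everywhere on $S$. This gives local constancy of ${\rm LT}$.

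For part b), once part a) is established, $(\Bun_{(\CG',\Gamma)})_\tau$ is an open and closed substack. I would construct the isomorphism with $\Bun_{\,^\tau\!\CG}$ functorially in families. Fix a base point $\CP'_0 \in (\Bun_{(\CG',\Gamma)})_\tau(k)$; by Proposition \ref{LTfiber} b) and its proof, $\CG^\flat = (\Res_{X'/X}\uAut(\CP'_0))^\Gamma$ is isomorphic to $^\tau\!\CG$, and the torsor $\CP'_0$ itself realizes a mutual $(\CG',\CG^\flat)$-bitorsor structure: $\CP'_0$ is a right $\CG'$-torsor on $X'$ with $\Gamma$-action and a left $\CG^\flat$-action via $\Res_{X'/X}$. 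The functor sends an $S$-family $\CQ$ of $^\tau\!\CG$-bundles to $\CQ \times^{\CG^\flat} (\Res_{X'/X}(\CP'_0)$-associated object$)$ — concretely, to the $(\CG',\Gamma)$-bundle obtained by contracting $\CP'_0$ along the $\CG^\flat$-torsor $\CQ$; the inverse functor sends a $(\CG',\Gamma)$-bundle $\CP'$ to $\Res_{X'/X}(\uIsom_{(\CG',\Gamma)}(\CP'_0,\CP'))^\Gamma$, which is a $\CG^\flat$-torsor precisely because $\CP'$ and $\CP'_0$ have the same local type $\tau$ (so the relevant $\uIsom$-sheaf on $X'$ is a torsor, and by purity — or directly by the local computation at points of $B$ in Proposition \ref{LTfiber} b) — taking $\Res^\Gamma$ preserves being a torsor). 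That these functors are mutually inverse, compatible with pullback, and send the chosen base point of $\Bun_{\,^\tau\!\CG}$ (the trivial torsor) to $\CP'_0$, follows from the Tannakian formalism of Broshi recalled at the start of section \ref{s:tamebdls} together with the identity $\CG^\flat = (\Res_{X'/X}\uAut(\CP'_0))^\Gamma$. The final statement, $\Bun_{(\CG',\Gamma)} \simeq \bigsqcup_\tau \Bun_{\,^\tau\!\CG}$, is then immediate from part a) (which says the $\tau$-substacks are open, closed, and cover) and part b).

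The main obstacle I expect is the passage from the \emph{pointwise} bijection ${\rm LT}^{-1}(\tau)\simeq \RH^1(X,\CG^\flat)$ of Proposition \ref{LTfiber} b) to an \emph{isomorphism of stacks}, i.e. making the construction work in families and checking it is functorial. Two points need care: first, the definition of $\CG^\flat$ uses a choice of base point $\CP'_0$, and one must verify that the resulting stack isomorphism is canonical once that base point is fixed (as asserted) — this is where the bitorsor description $\CP'_0 \in \uHom^{\mathrm{bi}}(\CG^\flat\text{-torsors}, (\CG',\Gamma)\text{-torsors})$ is essential. Second, in a family $\CP'$ over $S$, one must know that $\Res_{X'/X}(\uIsom_{(\CG',\Gamma)}(\CP'_0\times_k S, \CP'))^\Gamma$ is again a $\CG^\flat$-torsor over $S\times X$ and not merely a pseudo-torsor; this is exactly the content of the purity theorem applied with $\CG$ replaced by $\CG^\flat \cong {}^\tau\!\CG$ — the locus where it fails to be a torsor is closed, and since by hypothesis the whole family has local type $\tau$, part a) shows this locus is empty. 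So in fact part a), via Theorem \ref{thmpur}, is doing the heavy lifting in part b) as well; the remaining work is the formal but somewhat delicate bookkeeping of 2-categorical descent and the Giraud formalism \cite[Ch. V, 3.1.4, 3.1.6]{Giraud} in the stacky setting.
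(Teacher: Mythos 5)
Your overall strategy is the same as the paper's: both parts are proved together by using the purity theorem, Theorem \ref{thmpur}, applied not only to $\CG$ but also to the twisted group ${}^\tau\!\CG$ of Proposition \ref{LTfiber}, so that each fibre ${\rm LT}^{-1}(\tau)$ is realized as the set of $k$-points of an open and closed substack identified with $\Bun_{{}^\tau\!\CG}$. However, the key step of your part a) does not typecheck as written. You claim that, once the local type is $\tau$ on a dense open $U\subset S$, Theorem \ref{thmpur} ``relative to ${}^\tau\!\CG$'' shows that $\Res_{X'/X}(\CP')^\Gamma$ is an $S$-family of ${}^\tau\!\CG$-bundles. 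But $\Res_{X'/X}(\CP')^\Gamma$ carries an action of $\CG=\Res_{X'/X}(\CG')^\Gamma$, not of ${}^\tau\!\CG$, and for nontrivial $\tau$ it is \emph{never} a torsor near a branch point: at $x\in B$ its fibre is computed by the $\Gamma_{x'}$-fixed points of $\CG'(O'_{x'})\cdot h_{x'}^{-1}$, which are nonempty exactly when the cocycle $\gamma\mapsto h_{x'}^{-1}\gamma(h_{x'})$ is a coboundary, i.e.\ when the local type at $x$ is trivial. So there is no dense open over which this particular family is a family of ${}^\tau\!\CG$-bundles, and Theorem \ref{thmpur} has nothing to apply to.

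The repair is already contained in your part b): the object to which purity must be applied is the twisted pushforward $\Res_{X'/X}\bigl(\mathcal{I}som_{\CG'}(\CP'_0\times_k S,\CP')\bigr)^\Gamma$, a pseudo-torsor under $\CG^\flat=(\Res_{X'/X}\uAut(\CP'_0))^\Gamma\simeq{}^\tau\!\CG$, where $\CP'_0$ is a fixed $k$-point of type $\tau$ (it exists by Proposition \ref{LTfiber} a)). Equivalently, one applies Theorem \ref{thmpur} verbatim to the pair $({}^\tau\!\CG,\ \uAut_{\CG'}(\CP'_0))$, which is a $\Gamma$-related pair of stabilizer BT group schemes precisely by the identity \eqref{twistInv} in the proof of Proposition \ref{LTfiber}; this point (that the twist of $\CG'$ by $\CP'_0$ is again a stabilizer BT group scheme $\Gamma$-related to ${}^\tau\!\CG$) needs to be said explicitly, since Theorem \ref{thmpur} is stated only for such pairs. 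With that substitution your argument for a) goes through and then coincides with the paper's proof, which establishes that the natural maps $\Bun_\CG\to\Bun_{(\CG',\Gamma)}$ and $\Bun_{{}^\tau\!\CG}\to\Bun_{(\CG',\Gamma)}$ are open and closed embeddings whose images on $k$-points are ${\rm LT}^{-1}(\tau_0)$, resp.\ ${\rm LT}^{-1}(\tau)$, by Propositions \ref{propLeray} and \ref{LTfiber}. Your explicit description of the inverse equivalence in b) via the bitorsor $\CP'_0$ and the Giraud twisting formalism is a legitimate (and more detailed) way of spelling out the stack isomorphism that the paper asserts more tersely; no objection there, provided the $\CG^\flat$-torsor property in families is justified by the corrected version of a) as above.
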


The theorem implies immediately

\begin{corollary}
In the situation of Theorem \ref{thmlocconst}, fix for each $\tau\in   \prod_{x\in B} \RH^1({\Gamma_{x'}}, \CG'( O'_{x'}))$ an element $\CP_\tau$ with ${\rm LT}(\CP_\tau)=\tau$, with associated stabilizer BT group scheme $^\tau\!\CG$. Then there is a natural isomorphism
\[
\Bun_{(\CG', \Gamma)}\simeq \bigsqcup_{\tau\in   \prod_{x\in B} \RH^1({\Gamma_{x'}}, \CG'( O'_{x'}))}\Bun_{\,^\tau\!\CG} .
\]\qed
\end{corollary}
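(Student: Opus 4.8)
The plan is to deduce the corollary from Theorem \ref{thmlocconst} purely formally, by assembling the pointwise data of part (b) into a global statement. The starting point is part (a): since $k$ is algebraically closed and $\prod_{x\in B}\RH^1(\Gamma_{x'},\CG'(O'_{x'}))$ is a finite set (by the finiteness corollary for local types), local constancy of ${\rm LT}$ means that the map ${\rm LT}$ is constant on connected components of $\Bun_{(\CG',\Gamma)}$. Hence the substacks $(\Bun_{(\CG',\Gamma)})_\tau$, defined as the locus where ${\rm LT}=\tau$, are simultaneously open and closed, and as $\tau$ ranges over the (finite) indexing set they are pairwise disjoint and cover $\Bun_{(\CG',\Gamma)}$ on $k$-points.

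The next step is to promote this pointwise disjoint cover into an honest decomposition of algebraic stacks. The point is that an open and closed substack is determined by the open and closed subset of the underlying topological space it occupies, and the $(\Bun_{(\CG',\Gamma)})_\tau$ partition that space into finitely many clopen pieces. I would therefore write
\[
\Bun_{(\CG',\Gamma)}\;\simeq\;\bigsqcup_{\tau\in\prod_{x\in B}\RH^1(\Gamma_{x'},\CG'(O'_{x'}))}(\Bun_{(\CG',\Gamma)})_\tau ,
\]
as a disjoint union of open substacks, which is legitimate precisely because the index set is finite and the pieces are clopen and pairwise disjoint. (For a finite clopen partition the coproduct of the pieces maps isomorphically to the whole, since the property of being an isomorphism of stacks may be checked on the open cover given by the pieces themselves.)

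Finally I would substitute the identification supplied by Theorem \ref{thmlocconst}(b). For each $\tau$, after fixing a point $\CP_\tau\in(\Bun_{(\CG',\Gamma)})_\tau(k)$ — which exists by the surjectivity of ${\rm LT}$, i.e.\ Proposition \ref{LTfiber}(a) — part (b) gives a canonical isomorphism $(\Bun_{(\CG',\Gamma)})_\tau\simeq\Bun_{\,^\tau\!\CG}$, where $^\tau\!\CG$ is the stabilizer BT group scheme associated to $\tau$ via the chosen base point. Splicing these isomorphisms into the coproduct yields
\[
\Bun_{(\CG',\Gamma)}\;\simeq\;\bigsqcup_{\tau\in\prod_{x\in B}\RH^1(\Gamma_{x'},\CG'(O'_{x'}))}\Bun_{\,^\tau\!\CG},
\]
which is the assertion of the corollary.

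The only genuine subtlety — and the step I would flag most carefully — is the dependence of $^\tau\!\CG$ on the chosen base point $\CP_\tau$, already signalled in the remark following Proposition \ref{LTfiber} that ``the notation $^\tau\!\CG$ is somewhat misleading, since the group scheme $\CG^\flat$ depends on $\CP'$.'' Thus the isomorphism $(\Bun_{(\CG',\Gamma)})_\tau\simeq\Bun_{\,^\tau\!\CG}$ is canonical only \emph{after} the choice of $\CP_\tau$, and different choices yield group schemes that agree only up to an isomorphism inducing a compatible isomorphism of moduli stacks. The corollary handles this correctly by building the choice of $\CP_\tau$ into its own hypotheses (``fix for each $\tau$ an element $\CP_\tau$''), so there is no real obstacle here: the decomposition is valid for any such system of choices, and the proof is genuinely a formal bookkeeping of part (a) and part (b) together with the finiteness of the index set. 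I would keep the writeup to a few lines, emphasising that everything reduces to the clopen partition from (a) and the pointwise isomorphism from (b).
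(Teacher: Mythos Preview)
Your proposal is correct and matches the paper's approach: the paper marks this corollary with a \qed\ and the phrase ``The theorem implies immediately,'' i.e., it regards the decomposition as a formal consequence of parts (a) and (b) of Theorem \ref{thmlocconst} together with the surjectivity of ${\rm LT}$ from Proposition \ref{LTfiber}(a). Your writeup simply spells out this bookkeeping in detail.
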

As another consequence, we note.
\begin{corollary}
The algebraic stack $\Bun_{(\CG', \Gamma)}$ is smooth over $\Spec(k)$. If $G$ and $G'$ are semi-simple and simply connected, each summand $(\Bun_{(\CG', \Gamma)})_\tau$ is connected and hence
\[ 
\pi_0(\Bun_{(\CG', \Gamma)})=\prod_{x\in B} \RH^1({\Gamma_{x'}}, \CG'(O'_{x'})) .
\]
\end{corollary}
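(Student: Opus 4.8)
The plan is to deduce the corollary formally from Theorem \ref{thmlocconst} together with two structural inputs of Heinloth on moduli of Bruhat--Tits group schemes: smoothness of $\Bun_\CH$ for any smooth affine $X$-group scheme $\CH$ \cite[Prop.~1]{Hein}, and connectedness of $\Bun_\CH$ when $\CH$ is a parahoric BT group scheme whose generic fibre is semi-simple and simply connected \cite{Hein}. Since the corollary is explicitly billed as a consequence of the theorem, there is essentially nothing to prove beyond carefully invoking these inputs.

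For the smoothness assertion I would argue as follows. By Theorem \ref{thmlocconst}(a) the local type map ${\rm LT}$ is locally constant, so $\Bun_{(\CG',\Gamma)}$ is the disjoint union of the open and closed substacks $(\Bun_{(\CG',\Gamma)})_\tau$ as $\tau$ ranges over $\prod_{x\in B}\RH^1(\Gamma_{x'},\CG'(O'_{x'}))$, which is a finite set: it is a finite product ($B$ being finite) of the sets of local types, each of which is finite by Proposition \ref{LTprop2} and the finiteness corollary following it, and agrees with $\prod_{x\in B}\RH^1(\Gamma_{x'},\bar\CG'^{\rm red}_{x'}(k))$ via \eqref{redofcoho}. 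By Theorem \ref{thmlocconst}(b) each summand is isomorphic to $\Bun_{\,^\tau\!\CG}$ for a stabilizer BT group scheme $^\tau\!\CG$ over $X$, hence a smooth affine $X$-group scheme, so each summand is a smooth algebraic stack over $k$ by \cite[Prop.~1]{Hein}; since smoothness is local on the stack and a finite disjoint union of algebraic stacks is again algebraic, $\Bun_{(\CG',\Gamma)}$ is a smooth algebraic stack over $\Spec(k)$. (One could bypass the theorem for this part by observing that $\Bun_{(\CG',\Gamma)}=\Bun_{\fkG/\fkX}$ with $\fkX=[X'/\Gamma]$ a smooth proper tame Deligne--Mumford stack and $\fkG\to\fkX$ smooth affine, and redoing Heinloth's deformation-theoretic argument in that setting, but passing through the decomposition is cleaner.)

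For the connectedness assertion, suppose now that $G$, equivalently $G'$, is semi-simple and simply connected. Then every quasi-parahoric group scheme for $G$ over a complete local ring is automatically connected and parahoric (Remark \ref{stabBT}(ii)), so each $^\tau\!\CG$ is a \emph{parahoric} BT group scheme over $X$ with semi-simple simply connected generic fibre $G$. I would then invoke Heinloth's connectedness theorem \cite{Hein} to see that $\Bun_{\,^\tau\!\CG}$ is connected; it is moreover nonempty because ${\rm LT}$ is surjective (Proposition \ref{LTfiber}(a)). Transporting along the isomorphism of Theorem \ref{thmlocconst}(b), each $(\Bun_{(\CG',\Gamma)})_\tau$ is a nonempty connected open and closed substack of $\Bun_{(\CG',\Gamma)}$, hence a connected component; since the $(\Bun_{(\CG',\Gamma)})_\tau$ for distinct $\tau$ are disjoint and their union is all of $\Bun_{(\CG',\Gamma)}$, these are exactly its connected components, whence $\pi_0(\Bun_{(\CG',\Gamma)})=\prod_{x\in B}\RH^1(\Gamma_{x'},\CG'(O'_{x'}))$.

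The only real obstacle, given that everything reduces to Theorem \ref{thmlocconst}, is making sure Heinloth's results are applied within their stated scope: the smoothness statement must cover the possibly non-parahoric stabilizer group scheme $^\tau\!\CG$ (it does, being phrased for arbitrary smooth affine $X$-group schemes), and the connectedness statement must cover an \emph{arbitrary} parahoric BT group scheme with simply connected generic fibre, in particular the twisted forms $^\tau\!\CG$, which need not be of the constant-over-a-tame-cover type considered in \cite{BalaS,HoKu} --- but this is precisely the generality in which Heinloth works. One last point I would flag is that $^\tau\!\CG$ depends on the chosen base point of $(\Bun_{(\CG',\Gamma)})_\tau(k)$; however its isomorphism class, and a fortiori $\pi_0(\Bun_{\,^\tau\!\CG})$, does not, so the statement is unaffected.
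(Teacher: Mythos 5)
Your proposal is correct and follows essentially the same route as the paper: smoothness via Heinloth's smoothness of $\Bun_\CH$ for smooth affine $X$-group schemes applied through the decomposition of Theorem \ref{thmlocconst}, and connectedness of each summand via Heinloth's connectedness theorem for parahoric BT group schemes with simply connected generic fibre. Your additional care about nonemptiness of the summands (via surjectivity of ${\rm LT}$) is a worthwhile point that the paper leaves implicit.
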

\begin{proof}
The first assertion follows from the smoothness of $\Bun_\CG$ for any smooth affine group scheme $\CG$ over $X$, cf. above. The second assertion follows from Heinloth's theorem that $\Bun_\CG$ is connected for any parahoric BT group scheme with semi-simple simply connected generic fiber, cf. \cite[Thm. 2]{Hein}. 
\end{proof}

\begin{proof} (of Theorem \ref{thmlocconst}) 
By Theorem \ref{thmpur}, the natural map
\[
{\rm Bun}_{\CG}\to {\rm Bun}_{(\CG',\Gamma)}
\]
 is an open and closed embedding. On the other hand, by Proposition \ref{propLeray}, a point in ${\rm Bun}_{\CG',\Gamma}(k)$ lies in $\Bun_\CG(k)$ if and only if it is mapped under the local type map to the distinguished element $\tau_0$. Hence ${\rm LT}^{-1}(\tau_0)$ is open and closed in $\Bun_{(\CG', \Gamma)}$. Now let $\tau$ be arbitrary, and fix a point $\CP\in{\rm Bun}_{\CG',\Gamma}(k)$ mapping to $\tau$ under ${\rm LT}$, cf. Proposition \ref{LTfiber}. Recall that associated to $\CP$ is a stabilizer BT group scheme $^\tau\!\CG$ over $X$ with generic fiber $G$ such that the associated parahoric BT group scheme on $X'$ is identified with $\CG'$, compatibly with its $\Gamma$-action, cf.  Proposition \ref{LTfiber}. Furthermore, a point in ${\rm Bun}_{(\CG',\Gamma)}(k)$ lies in $\Bun_{^\tau\!\CG}(k)$ if and only if it is mapped under the local type map to  $\tau$.  Applying Theorem \ref{thmpur} to $^\tau\!\CG$ instead of $\CG$ implies that ${\rm LT}^{-1}(\tau)$ can be identified with the $k$-valued points of the open and closed substack $\Bun_{\,^\tau\!\CG}$ of $\Bun_{(\CG', \Gamma)}$. Theorem \ref{thmlocconst} is proved.
\end{proof}

\section{Concluding remarks}\label{s:concl}
The main thrust of the whole paper is to represent a BT group scheme on the curve $X$ as $\CG=\Res_{X'/X}(\CG')^\Gamma$, where $\CG'$ is a reductive group scheme over the tame cover $X'$ with compatible $\Gamma$-action. In this spirit it seems natural to express objects associated with $\CG$ in terms of $\CG'$ with its $\Gamma$-action. We did this here for the theory of $\CG$-bundles and their moduli stacks. But there are more objects associated to $\CG$ (or $G$),  which one might view with  profit from this point of view. We collect in this section a few (half-baked?) suggestions for further work inspired by this point of view. In this section, for simplicity, we make the blanket assumption that $G$ is a semi-simple simply connected group and denote by $\CG$ a parahoric BT group scheme for $G$ over $X$. 

\subsection{Uniformization}
Recall that Heinloth \cite{Hein} has proved the uniformization theorem for $\CG$-bundles, as conjectured in \cite[Conj. 3.3]{PRq}: for any $x\in X$, there is a presentation
\begin{equation}
\Bun_\CG \simeq \Gamma_{X\setminus \{x\}}(\CG)\backslash LG_x/L^+\CG_x .
\end{equation}
In light of a presentation of $\CG$ in terms of a tame Galois cover $X'/X$ and a reductive group scheme $\CG'$ over $X'$, it seems reasonable to connect this presentation to the uniformization theorem for $\CG'$ which is more standard, cf. \cite{PRtwisted}. Note that in the context of \cite{HoKu}, i.e., when $\CG'=H\times_{\Spec(k)} X'$ for a $k$-group $H$ equipped with an action $\rho\colon \Gamma\to\Aut(H)$, Hong and Kumar consider the stack of \emph{quasi-parabolic $\CG$-bundles}  (these are $\CG$-bundles with an additional (``parabolic'') structure) and prove an analogous uniformization theorem, where one divides out on the left by $\Gamma_{X'\setminus \pi^{-1}(x)}(\CG')^\Gamma$. It would be interesting to establish such a uniformization statement in our more general context. 

\subsection{The Verlinde formula}
In \cite[Conj. 3.7]{PRq}, a finite-dimensional \emph{space of conformal blocks}  $\RH^0(\Bun_\CG, \CL)$ is introduced for certain line bundles on $\Bun_\CG$, and the question is raised whether there is an explicit expression for its dimension (a \emph{Verlinde formula}).  In the context of \cite{HoKu}, such a formula is given in \cite{HoKu2} and in \cite{Muk}. It would be interesting to give such a formula in general, starting  from an explicit presentation of the parahoric BT group scheme $\CG$ via a reductive group scheme over a Galois cover of $X$.

\subsection{The Tamagawa number}

Assume  the ground field $k$ to be finite. Let $\tau(G)$ be the Tamagawa number of $G$. Recall Weil's conjecture that $\tau(G)=1$, comp. \cite{Ko}.  It is known to hold if $G$ is split (\cite{HAnn}), and even when $G$ is quasi-split. A proof has been announced in general by  Gaitsgory-Lurie \cite{GaLurie}. Here we are interested in a relative set-up and hope to connect a general $G$ to the quasi-split case. More precisely, we assume that   there exists a tame Galois cover $X'/X$ with Galois group $\Gamma$  such that the fixed parahoric $\CG$ is of the form $\CG=\Res_{X'/X}(\CG')^\Gamma$, where $\CG'$ is a reductive group scheme over $X'$. Then $G'$ is quasi-split, cf. Proposition \ref{Hasse}. It might be fruitful to  compare the Tamagawa numbers of $G$ and $G'$ by relating the volumes of $\CG$ and $\CG'$. Note that this is different from the Langlands-Kottwitz approach \cite{Ko}, in which one compares the Tamagawa numbers of $G$ and its quasi-split inner form.  In relation to this, one might also wonder about the existence of a suitable invariant (an ``equivariant $(\CG',\Gamma)$-Tamagawa number'')   for $[\CG'/\Gamma]$ over the orbifold curve $[X'/\Gamma]$ that would facilitate the comparison.

   \bigskip
   
   \bigskip

\section{Appendix: On the Hasse Principle in the function field case} 

\centerline{\sl by Brian Conrad}

\medskip

\medskip

A connected semisimple group $G$ over a global field $k$ is said to 
{\em satisfy the Hasse Principle} 
if the natural map of sets
$$\rho_G: {\rm{H}}^1(k,G) \to \prod_v {\rm{H}}^1(k_v,G)$$
is injective. 
It is a fundamental result due to Kneser \cite{kneser}, Harder (\cite{hardersc1}, \cite{hardersc2}), and Chernousov \cite{chernousov}
over number fields, and Harder \cite[Satz A]{harder} over global function fields that any simply connected $G$ satisfies the Hasse Principle
(see \cite[Thm.\,6.6]{PR} for an exposition of the complete proof over number fields, and \cite{skip} for a wider context). 

In this appendix, we explain how that implies the Hasse Principle for $G$ which are either of adjoint type 
or absolutely simple  over {\em global function fields}, a case not as readily found
in the literature as number fields.
These facts are well-known to experts. (See Remark \ref{numfields} for the relation with arguments over number fields. 
We deduce the absolutely simple case from the adjoint-type case, whereas over number fields the proof in the literature proceeds in the opposite direction.)

Our main aim is to give a proof of: 

\begin{theorem}\label{adjoint}
If $G$ is of adjoint type over a global function field $k$ then it satisfies the Hasse Principle.
\end{theorem}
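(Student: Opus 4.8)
The plan is to reduce the Hasse Principle for $G$ to the already-known Hasse Principle for simply connected groups over $k$ (Harder's Satz A), combined with Poitou--Tate duality for finite commutative group schemes of multiplicative type over the global function field $k$. The first step is a standard twisting argument. Inner forms of an adjoint group are again adjoint, and for a cocycle $c\in Z^1(k,G)$ the twisting bijection $\RH^1(k,{}_{c}G)\xrightarrow{\sim}\RH^1(k,G)$ onto $\RH^1(k,G)$ (where ${}_{c}G$ is the inner form cut out by $c$) is compatible with restriction to the completions $k_v$; hence it carries $\ker\rho_{{}_{c}G}$ bijectively onto the fibre of $\rho_G$ through the class of $c$. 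Therefore $\rho_G$ is injective for \emph{every} adjoint $G$ if and only if $\ker\rho_G$ is trivial for every adjoint $G$, and it suffices to prove the latter.

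So let $c\in\RH^1(k,G)$ be everywhere locally trivial. Let $\widetilde G\to G$ be the simply connected central cover, with kernel $\mu=Z(\widetilde G)$, a finite $k$-group scheme of multiplicative type; since $p$ may divide $|\mu|$ we take $\mu$ and the connecting map $\delta$ below in the fppf topology. The exact sequence of pointed sets
\[
\RH^1(k,\widetilde G)\to \RH^1(k,G)\xrightarrow{\ \delta\ }\RH^2(k,\mu),
\]
together with the equality $\RH^1(k,\widetilde G)=\ast$ (Harder's Hasse Principle for the simply connected $\widetilde G$ over $k$, combined with the vanishing of $\RH^1(k_v,\widetilde G)$ at every place, which is available since all places of $k$ are non-archimedean), shows that $\delta$ has trivial kernel. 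It is thus enough to prove $\delta(c)=0$. As $\delta$ commutes with $\mathrm{res}_v$ and $c$ is locally trivial, $\delta(c)\in\Sha^2(k,\mu):=\ker\bigl(\RH^2(k,\mu)\to\prod_v\RH^2(k_v,\mu)\bigr)$, so the theorem reduces to the assertion $\Sha^2(k,\mu)=0$.

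This last vanishing concerns only the finite multiplicative-type group scheme $\mu$, whose Cartier dual $\mu^{\vee}$ is a finite abelian group with continuous Galois action. Writing $\widetilde G=\prod_i\Res_{k_i/k}(\widetilde G_i)$ with $\widetilde G_i$ absolutely simple over finite separable extensions $k_i/k$ gives $\mu=\prod_i\Res_{k_i/k}(Z(\widetilde G_i))$, and Shapiro's lemma (with the places of $k_i$ above a place of $k$ matched up) reduces us to $\mu=Z(\widetilde G)$ for $\widetilde G$ absolutely simple over a global function field. By Poitou--Tate duality for finite flat group schemes over global fields, $\Sha^2(k,\mu)\cong\Sha^1(k,\mu^{\vee})^{\vee}$, so it suffices to show $\Sha^1(k,\mu^{\vee})=0$. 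Now $\mu^{\vee}$ is the fundamental group of the adjoint form, carrying the Galois action through the image of $\Gal(k^{\mathrm{sep}}/k)$ in $\mathrm{Out}(\widetilde G)$; this image is cyclic of order $\le 2$ unless $\widetilde G$ has type $D_{2m}$, in which case $\mu^{\vee}\cong(\BZ/2)^2$ and the image may be a larger subgroup of $S_3=\GL_2(\BF_2)$. When $\mu^{\vee}$ is cyclic one passes to the at-most-quadratic splitting field $k'$ of the action, kills $\Sha^1(k',\mu^{\vee}_{k'})=\Sha^1(k',\BZ/n)=0$ by Chebotarev, and concludes by inflation--restriction: any nonzero class inflated from $\RH^1(k'/k,-)$ survives at a place inert in $k'/k$, which exists. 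In type $D_{2m}$ the nontrivial generator of the relevant $\BZ/2$-action acts on $\BF_2^2$ as a single Jordan block, so $\mu^{\vee}\cong\Ind_{\Gamma_{k'}}^{\Gamma_k}\BZ/2$ is induced from the quadratic extension $k'$, whence $\Sha^1(k,\mu^{\vee})=\Sha^1(k',\BZ/2)=0$; in the residual $D_4$-triality cases one first restricts to a cubic subextension over which the module becomes induced and then uses corestriction, the index $3$ being prime to the exponent $2$. This gives $\Sha^2(k,\mu)=0$, hence $\delta(c)=0$, hence $c$ is trivial, completing the proof.

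The main obstacle is the type-$D_{2m}$ case: here $\mu^{\vee}$ is a non-cyclic $2$-group, and $\Sha^1$ of a $(\BZ/2)^2$ Galois module does not vanish for an arbitrary action, so one genuinely needs the monomial structure forced by the Dynkin diagram (that the module is induced from a quadratic, or after a cubic twist from a quadratic, subextension). A secondary point requiring attention is the positive-characteristic bookkeeping: when $p\mid |Z(\widetilde G)|$ the centre is non-smooth, so one must work throughout with fppf cohomology and invoke Poitou--Tate in the form valid for finite flat group schemes over global function fields.
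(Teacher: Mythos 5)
Your argument is correct, and its first half (the twisting reduction, Harder's Hasse Principle for $\widetilde G$ together with $\RH^1(k_v,\widetilde G)=1$ at the non-archimedean places, and the boundary map sending a locally trivial class into $\Sha^2(k,\mu)$) coincides with the paper's Step 1; the only gloss there is that, since $\mu$ need not be smooth, the paper routes the connecting map through \v{C}ech $\RH^2$ and the injective edge map to derived fppf $\RH^2$, a point you should either reproduce or justify otherwise. Where you genuinely diverge is in proving $\Sha^2(k,\mu)=0$. The paper does this uniformly: replace $G$ by its quasi-split inner form (same $\mu$), observe that the maximal torus $T$ of a Borel in the adjoint quasi-split $G$ and its preimage $\widetilde T\subset\widetilde G$ are \emph{induced} tori (diagram automorphisms permute the simple roots, resp.\ simple coroots), so $\RH^1(k,T)=1$ and $\Sha^2(k,\widetilde T)=1$ by Shapiro plus the Brauer-group local--global sequence, and then $1\to\mu\to\widetilde T\to T\to1$ squeezes $\Sha^2(k,\mu)$ into $\Sha^2(k,\widetilde T)=1$ — no duality theorem and no case analysis. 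You instead invoke Poitou--Tate duality for finite flat group schemes (needed in its flat form when $p\mid|\mu|$, as you note) to convert the problem into $\Sha^1(k,\mu^{\vee})=0$, and then analyze the fundamental group case by case via the small automorphism groups of the Dynkin diagrams (cyclic modules with at most quadratic action; the induced $(\BZ/2)^2$ module in type $D_{2m}$, plus the trivial-action subcase which you should state is handled componentwise by Chebotarev; the prime-to-$2$ corestriction trick for $D_4$ triality). This is essentially Sansuc's ``meta-cyclic'' route, which the paper's Remark 9.3 explicitly contrasts with Conrad's argument; it buys you the stronger intermediate statement $\Sha^1(k,\mu^{\vee})=0$ (hence, via duality, the identification of $\ker\rho_G$ in the style of Sansuc/Kottwitz), at the cost of a heavier duality input and a type-by-type verification, whereas the paper's induced-torus argument is shorter, uniform in the Dynkin type, and needs only class field theory for Brauer groups of induced tori.
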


To begin the proof of Theorem \ref{adjoint}, let $q:\widetilde{G} \to G$ be the simply connected central cover and 
 $\mu = Z_{\widetilde{G}}$ be the (scheme-theoretic) center of $\widetilde{G}$. We have a short exact sequence of $k$-group schemes
\begin{equation}\label{scseq}
1 \to \mu \to \widetilde{G} \to G \to 1
\end{equation}
for the fppf topology, where $\mu$ is a finite $k$-group scheme of multiplicative type.  We will proceed in two steps:

\medskip
\noindent
{\bf Step 1}:  Let $G$ be a semi-simple group scheme over $k$ such that 
$$
\Sha^2(k, \mu) := \ker({\rm{H}}^2(k, \mu) \to \prod_v {\rm{H}}^2(k_v, \mu))
$$
vanishes. Then $G$ satisfies  the Hasse Principle. 

This step {\em does not use} the hypothesis that $G$ is of adjoint type (it works for any connected semisimple $k$-group $G$).

\smallskip 

\noindent
{\bf Step 2}:  Let $G$ be adjoint. Then $\Sha^2(k, \mu)=0$.

\smallskip

First we carry out Step 1. Pick $\xi \in {\rm{H}}^1(k,G)$, and consider any $\xi'$ in the $\rho_G$-fiber through $\xi$.  We want to show $\xi' = \xi$.
By applying the standard twisting argument that replaces $G$ with its twist by $\xi$, we reduce to the case $\xi = 1$.  In other words,
it suffices to show $\Sha^1(k,G)=1$.
We have ${\rm{H}}^1(k_v, \widetilde{G}) = 1$ for every place $v$ of $k$ since $k_v$ is non-archimedean (this vanishing is due to Kneser and Bruhat-Tits \cite[Thm.\,4.7(ii)]{bruhattits}).
Hence, ${\rm{H}}^1(k, \widetilde{G}) = 1$ by the known Hasse Principle for $\widetilde{G}$,
so it suffices to show that any element $\xi' \in \Sha^1(k,G) \subset {\rm{H}}^1(k,G)$ 
comes from ${\rm{H}}^1(k, \widetilde{G})$. 

 The short exact sequence (\ref{scseq})  gives rise to an exact sequence of pointed fppf cohomology sets
$${\rm{H}}^1(k, \widetilde{G}) \to {\rm{H}}^1(k, G) \stackrel{\delta}{\to} \check{{\rm{H}}}^2(k, \mu).$$
and likewise compatibly with $k$ replaced by any field extension (such as the completions $k_v$). 
Thus, $\Sha^1(k,G)$ is carried by $\delta$ into 
$$\check{\Sha}^2(k,\mu) := \ker(\check{{\rm{H}}}^2(k, \mu) \to \prod_v \check{{\rm{H}}}^2(k_v,\mu)),$$
so it suffices to show $\check{\Sha}^2(k,\mu) = 1$.

The natural edge map from \u{C}ech-${\rm{H}}^2$ to derived ${\rm{H}}^2$ for abelian fppf sheaves on any scheme is injective,
and is also compatible with pullback maps on the two sides (relative to any scheme map), so $\check{\Sha}^2(k,\mu)$ is naturally a subgroup of
$$\Sha^2(k,\mu) := \ker({\rm{H}}^2(k,\mu) \to \prod_v {\rm{H}}^2(k_v,\mu)).$$
 Therefore, the vanishing of $\Sha^2(k,\mu)$ implies the vanishing of $\check{\Sha}^2(k,\mu)$. This finishes Step 1. 

Now we turn to Step 2.   As for any connected reductive group over any field, $G$ has a (unique up to isomorphism) quasi-split
inner form, and passing to an inner form doesn't affect the scheme-theoretic center (by the very meaning of ``inner form'').  Such a form is also of adjoint type, so for our purposes now we can assume $G$ is quasi-split. 
 Let $B$ be a Borel $k$-subgroup of $G$
and $T$ a maximal $k$-torus in $B$ (these tori are all $k$-isomorphic, being $B/\mathscr{R}_u(B)$).

 If $(G_0, T_0, B_0)$ 
is the corresponding split form of $G$ (also of adjoint type), the Galois-twisting by
which the {\em quasi-split} $G$ is made from $G_0$ involves the finite group of diagram automorphisms acting on 
 $T_0 \subset G_0$ through a permutation action on the basis of ${\rm{X}}^*(T_0)$ consisting of simple positive roots relative to $B_0$.
Hence, $T$ is induced, so ${\rm{H}}^1(k,T)=1$.  

The preimage $\widetilde{T} \subset \widetilde{G}$ of $T$ under $q:\widetilde{G} \to G$
is a maximal torus in the preimage $\widetilde{B}  = q^{-1}(B) \subset \widetilde{G}$ that is a Borel $k$-subgroup,
and so $\widetilde{T}$ is also induced (because the {\em quasi-split} simply connected $\widetilde{G}$ is made from
the split simply connected $\widetilde{G}_0$ through Galois-twisting by a finite group of diagram automorphisms acting on
$\widetilde{T}_0 \subset \widetilde{G}_0$ through a permutation action on the basis of ${\rm{X}}_*(\widetilde{T}_0)$
consisting of simple positive coroots relative to $\widetilde{B}_0$). 
It follows that $\Sha^2(k,\widetilde{T}) = 1$ by Shapiro's Lemma and the global-to-local exact sequence of Brauer groups in class
field theory for finite separable extensions of $k$.

From the diagram of $k$-group schemes
$$1 \to \mu \to \widetilde{T} \to T \to 1$$
that is short exact for the fppf topology, 
we get a long exact sequence of fppf cohomology groups
$${\rm{H}}^1(k,T) \to {\rm{H}}^2(k,\mu) \to {\rm{H}}^2(k,\widetilde{T})$$
and likewise with each $k_v$ in place of $k$.
(Since $T$ and $\widetilde{T}$ are smooth, a result of Grothendieck \cite[Thm.\,11.7]{brauer} ensures that the fppf cohomology of these tori
in each positive degree coincides with the Galois cohomology for these 
tori in that same degree over any field extension $K$ of $k$. Thus, there is no ambiguity about the 
cohomology of tori appearing here.)    Hence, the above vanishing properties
for $T$ and $\widetilde{T}$ give an injection of $\Sha^2(k,\mu)$ into $\Sha^2(k, \widetilde{T}) = 1$, so we are done.

 \begin{corollary}\label{abssimple} For absolutely simple $G$ over a global function field $k$, the Hasse Principle holds.\footnote{Semisimple but not absolutely simple counterexamples to the Hasse principle are given over every number field by Serre 
\cite[Ch.\,III, \S4.7]{serre}.}
 \end{corollary}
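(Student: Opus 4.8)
The plan is to deduce the absolutely simple case from the adjoint case established in Theorem \ref{adjoint}. Let $q\colon\widetilde G\to G$ be the simply connected central cover, with kernel $\mu$, a finite $k$-group scheme of multiplicative type lying in $Z_{\widetilde G}$. By the standard twisting argument it suffices to show $\Sha^1(k,G)=1$, and for this I would simply repeat Step 1 of the proof of Theorem \ref{adjoint} --- its argument uses nowhere that $G$ is adjoint. From the fppf sequence $1\to\mu\to\widetilde G\to G\to 1$, the vanishing ${\rm H}^1(k_v,\widetilde G)=1$ at every (non-archimedean) place $v$ \cite[Thm.\,4.7(ii)]{bruhattits}, and the Hasse Principle ${\rm H}^1(k,\widetilde G)=1$ for the simply connected group \cite[Satz A]{harder}, one gets that the connecting map carries $\Sha^1(k,G)$ into $\check{\Sha}^2(k,\mu)$, which in turn injects into $\Sha^2(k,\mu)$ via the \v{C}ech-to-derived edge map. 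So everything comes down to showing $\Sha^2(k,\mu)=1$.

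The second step is to identify $\mu$. If $G$ is simply connected we are done by Harder, and if $G$ is adjoint we are done by Theorem \ref{adjoint}; so assume $G$ is neither (call such $G$ \emph{intermediate}). Since $\mu\subseteq Z_{\widetilde G}$ and the Galois action on $Z_{\widetilde G}$ factors through the outer automorphism group of $\widetilde G$, the character group $\widehat\mu$ is cyclic, say of order $d$, and the Galois action on it has image in $\{\pm 1\}\subseteq(\BZ/d)^\times$, so it factors through a separable quadratic extension $\ell/k$ (or is trivial). Consequently $\mu$ is the kernel of multiplication by $d$ on a one-dimensional torus $T$: namely $T=\BG_m$ when the action is trivial (then $\mu=\mu_d$), and $T=R^{(1)}_{\ell/k}\BG_m$, the norm-one torus, otherwise (then $\widehat\mu=\widehat T/d$). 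The only case in which $\widehat{Z_{\widetilde G}}$ itself is not cyclic is type $D_4$, where $\widehat{Z_{\widetilde G}}=(\BZ/2)^2$ with an $S_3$-action; but a proper nontrivial quotient is again $\BZ/2$ with trivial action, so the exceptional module only occurs for $\mu=Z_{\widetilde G}$, i.e.\ for adjoint $G$, already covered by Step 2.

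The third step proves $\Sha^2(k,\mu)=1$ by the same ``sandwich between induced tori'' device as in Step 2. From $1\to\mu\to T\xrightarrow{\,d\,}T\to 1$ and the resulting exact sequence of $\Sha$'s, $\Sha^2(k,\mu)$ is squeezed between $\Sha^1(k,T)$ and $\Sha^2(k,T)$, so it suffices to show both vanish. For $T=\BG_m$ this is Hilbert 90 together with the Albert--Brauer--Hasse--Noether theorem ($\Sha^2(k,\BG_m)=\Sha(\Br(k))=0$). For $T=R^{(1)}_{\ell/k}\BG_m$, the vanishing $\Sha^1(k,R^{(1)}_{\ell/k}\BG_m)=0$ is exactly the Hasse norm theorem for the quadratic (hence cyclic) extension $\ell/k$, and $\Sha^2(k,R^{(1)}_{\ell/k}\BG_m)=0$ follows, as in Step 2's treatment of centers, from $1\to R^{(1)}_{\ell/k}\BG_m\to\Res_{\ell/k}\BG_m\xrightarrow{\,N\,}\BG_m\to 1$: Hilbert 90 identifies ${\rm H}^2(k,R^{(1)}_{\ell/k}\BG_m)$ with $\ker(\Br(\ell)\to\Br(k))$, and an element of it that is everywhere locally trivial is trivial in every $\Br(\ell_w)$, hence zero by the global-to-local sequence for $\Br(\ell)$. (All cohomology is for the fppf topology; the tori here are smooth, so by Grothendieck \cite[Thm.\,11.7]{brauer} this agrees with Galois cohomology, and there is no trouble when $p\mid d$.) This yields $\Sha^2(k,\mu)=1$, hence Corollary \ref{abssimple} --- the reverse order of implications from the one usually employed over number fields.

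I expect the only real difficulty to be bookkeeping, concentrated in two checks. First, that the exceptional $(\BZ/2)^2$-with-$S_3$ module never arises for an intermediate absolutely simple group: this matters because the associated torus $R^{(1)}_{E/k}\BG_m$ for a \emph{non-Galois} cubic $E/k$ can genuinely have nonzero $\Sha^1$, so a careless sandwich would break; but Step 2 dispatches adjoint $D_4$ by routing through the \emph{induced} maximal tori of the quasi-split simply connected and adjoint forms, not through $R^{(1)}_{E/k}\BG_m$. Second, that all the reductions --- separability of $\ell/k$, exactness of the Kummer and isogeny sequences for the fppf topology, and the Hasse norm theorem for cyclic extensions --- remain valid over a global function field in every residue characteristic, which is standard.
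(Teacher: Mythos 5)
Your proposal is correct in substance and follows the same skeleton as the paper's proof: the reduction via Step 1 of Theorem \ref{adjoint} (valid for every connected semisimple group) to showing $\Sha^2(k,\mu)=1$, the observation that intermediate $G$ occur only in types $A_n$ ($n\ge 3$) and $D_n$ ($n\ge 4$) with $\mu$ either $\mu_d$ or a quadratic twist $\mu_d(\chi)$ (and the exclusion of the exceptional $D_4$ module), and the Kummer/Brauer-group argument for the untwisted case are all exactly what the paper does. Where you genuinely diverge is the twisted case $\mu_d(\chi)$ with $d>2$: the paper uses Garibaldi's trick, recognizing $\mu_d(\chi)$ as the center of the quasi-split outer form ${\rm SU}_d(K/k)$ of ${\rm SL}_d$ and then simply quoting Step 2 of Theorem \ref{adjoint} (induced maximal tori of the quasi-split form), whereas you compute directly with the norm-one torus $T=R^{(1)}_{\ell/k}\BG_m$, using the Hasse norm theorem and the relative Brauer group of $\ell/k$. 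Your route is more self-contained and makes explicit where cyclicity of $\ell/k$ enters; the paper's is shorter because it recycles Step 2 verbatim.

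One step needs repair as written: there is in general no exact sequence $\Sha^1(k,T)\to\Sha^2(k,\mu)\to\Sha^2(k,T)$, so ``squeezed between, hence it suffices that both vanish'' is not by itself a valid inference (your own worry about the non-Galois cubic torus shows you sensed this). The correct chase gives less: for $\alpha\in\Sha^2(k,\mu)$, its image lies in $\Sha^2(k,T)=0$, so $\alpha=\delta(\beta)$ with $\beta\in{\rm H}^1(k,T)$, and local triviality of $\alpha$ only yields $\beta_v\in d\,{\rm H}^1(k_v,T)$, not $\beta_v=0$. What saves your argument is that for $T=R^{(1)}_{\ell/k}\BG_m$ with $\ell/k$ quadratic, the groups ${\rm H}^1(k,T)\cong k^\times/N_{\ell/k}\ell^\times$ and ${\rm H}^1(k_v,T)\cong k_v^\times/N\ell_w^\times$ are killed by $2$ (since $N(a)=a^2$ for $a\in k^\times$; locally the norm index is at most $2$). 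Hence if $d$ is even, $d\,{\rm H}^1(k_v,T)=0$, so $\beta\in\Sha^1(k,T)$, which vanishes by the Hasse norm theorem, giving $\alpha=0$; if $d$ is odd, multiplication by $d$ is bijective on the $2$-torsion group ${\rm H}^1(k,T)$, so $\beta\in d\,{\rm H}^1(k,T)$ and $\alpha=\delta(\beta)=0$ outright. (For $T=\BG_m$ there is no issue, since ${\rm H}^1$ vanishes globally and locally.) With this two-line addendum your proof is complete; the remaining points you raise --- fppf exactness of the Kummer and isogeny sequences when $p\mid d$, agreement of fppf and Galois cohomology for the smooth tori, and the validity of ABHN and the Hasse norm theorem over global function fields --- are indeed standard and handled correctly.
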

 
 \begin{proof}
 Since the Hasse Principle is known for $G$ that are either simply connected or adjoint type, 
it remains to treat the cases when $G$ is neither simply connected nor adjoint type.  This
makes $G$ of type A$_n$ with $n \ge 3$ or D$_n$ with $n \ge 4$ and $\mu_G := \ker(\widetilde{G} \to G)$ a
{\em nontrivial proper} $k$-subgroup scheme of
$Z_{\widetilde{G}}$.  By Step 1 in the proof of Theorem \ref{adjoint} (which applies
to all connected semisimple $k$-groups), it suffices to show $\Sha^2(k, \mu_G)=1$. 
 
For type D, necessarily $\mu_G = \mu_2$ (as $\mu_2$ has no non-trivial Galois-twists)
and $\Sha^2(k, \mu_m)=1$ for all $m$ by the Brauer group argument as near the end of the proof of Step 2   in the proof of Theorem \ref{adjoint}. 
For type A$_n$ (with $n \ge 3$) necessarily $\mu_G$ is either $\mu_d$ for some nontrivial proper divisor $d$ of $n+1$
or a {\em quadratic} Galois twist $\mu_d(\chi)$ through inversion.  It remains to show $\Sha^2(k, \mu_d(\chi)) = 1$.
If $d=2$ then $\mu_d(\chi) = \mu_d$, so we may assume $d > 2$.  Then $\mu_d(\chi)$ is the center $\mu_{G'}$ of 
an outer form $G'$ of ${\rm{SL}}_d$, such as a special unitary group ${\rm{SU}}_d(K/k)$ associated to a quadratic Galois extension $K/k$.
But then $\Sha^2(k, \mu_d(\chi)) = \Sha^2(k, \mu_{G'}) = 1$ by Step 2   in the proof of Theorem \ref{adjoint}.
(This trick with ${\rm{SU}}_d(K/k)$, suggested by S.\,Garibaldi, is much simpler than the original argument.) 
\end{proof}
 
\begin{remark}\label{numfields}  For the interested reader, here is a comparison of the preceding arguments with two primary references
on the Hasse Principle over number fields beyond the simply connected case: the book \cite{PR} and the paper \cite{sansuc}. Indeed, the proofs above  adapt  to work over number fields (see below), so it is of interest to compare them with existing approaches.

Over number fields, Theorem \ref{adjoint} is proved in \cite[\S6.5, Thm.\,6.22]{PR} in a different way
using case-by-case determination of centers of various
connected semisimple groups. 

Sansuc \cite[Cor.\,5.4(i),(ii)]{sansuc} also proves both Theorem \ref{adjoint} and Corollary \ref{abssimple}
over number fields (the E$_8$-avoidance there became unnecessary via later work
of Chernousov \cite{chernousov}) but with several key differences (especially in the presence of real places).  Firstly, the analogue of the argument in  Step 1 
is simpler over number fields, since $k$-groups of finite type are smooth when ${\rm{char}}(k)=0$ (so the distinction between
$\check{{\rm{H}}}^i(k,C)$ and ${\rm{H}}^i(k,C)$ for commutative $k$-groups $C$ of finite type does not arise). On the other hand, the presence of archimedean places necessitates the additional fact that the map
\begin{equation}\label{surj}
{\rm{H}}^1(k, \mu) \to \prod_{v|\infty} {\rm{H}}^1(k_v, \mu)
\end{equation}
is surjective. This follows easily from real approximation for tori (and is valid for any diagonalizable group $\mu$). 

Secondly,  in \cite{sansuc} the logic
of implication between the results is the other way around, putting the emphasis on directly proving Corollary \ref{abssimple}.  
Finally, passage to a quasi-split inner form is used in a rather different way:  instead of the induced property of tori of Borel $k$-subgroups (for adjoint-type quasi-split
$G$), the ``meta-cyclic'' property of the splitting field of the \'etale Cartier dual of $\mu_G$ (for absolutely simple quasi-split $G$) is invoked
(see \cite[Cor.\,5.2]{sansuc} and the end of the proof of \cite[Cor.\,5.4]{sansuc}). That meta-cyclic property rests on knowledge of the small automorphism groups of connected
Dynkin diagrams.

Note here that Sansuc actually shows bijections \cite[Cor. 4.4, (4.3.2)]{sansuc} 
\begin{equation}\label{bijection}
\ker \rho_G\simeq \Sha^2(k, \mu)\simeq\Sha^1(k, {\rm{X}}^*(\mu))^*
\end{equation}
(the E$_8$-avoidance there became unnecessary via later work
of Chernousov). The second isomorphism is given by Tate global duality for finite Galois modules over number fields \cite[Ch.\,VIII, Thm.\,8.6.9]{neukirch}. (A somewhat different proof of (\ref{bijection}) is also provided by Kottwitz \cite[(1.8.4), (4.2.2)]{kottwitz}.)

Our proof of Theorem \ref{adjoint} adapts to work over number fields, as follows. Using (\ref{bijection})  (in fact, only the simpler injectivity of the first map is needed), it is sufficient to prove the vanishing of $\Sha^2(k, \mu)$  (and the surjectivity of \eqref{surj}),
completing Step 1 over number fields. Step 2 works as written over all global fields, as does the deduction of Corollary \ref{abssimple} 
from Theorem \ref{adjoint}.
\end{remark}

\begin{remark}
Here are some additional remarks on the literature. First, Step 1 also appears (for number fields) in the  lecture notes of Kneser \cite[\S 5.2]{kneser} and in the survey of Harder \cite[Satz 4.3.2]{harder1}. The argument in Step 2 also appears (with a reference to another paper of Harder) in \cite[\S 2, Bemerkung, p.~129]{harder} (the fact that this section treats groups of type A plays no role at this point). As further more recent references in the function field case, we mention  \v Cesnavi\v cius \cite{Ce}, Th\' ang \cite{Th}, and Rosengarten \cite{Ro}.
\end{remark}

\medskip

   \bigskip

 \end{document}